\documentclass[reqno,11pt]{amsart}
\usepackage{a4wide}
\usepackage{amsmath}
\usepackage{amsthm}
\usepackage{amssymb}
\usepackage{amsfonts}
\usepackage{color}
\usepackage{enumerate}
\usepackage{graphicx}

\parskip 4pt
\newtheorem{Th}{Theorem}

\newtheorem{cor}{Corolary}
\newtheorem{prop}{Proposition}

\newtheorem{obs}{Remark}
\newtheorem{lema}{Lemma}
\def\rr{\mathbb{R}}

\def\nn{\mathbb{N}}
\def\cn{(N-2)^2/4}
\def\into{\int_{\Omega}}
\def\hoi{H_{0}^{1}(\Omega)}

\def\intR{\int_{0}^{R}}


\def\dtheta{\mathrm{d\theta}}
\def\pa{\partial}

\def\con{\mathcal{C}_\gamma}
\def\d{\partial}

\def\eps{\varepsilon}
\def\ig{\int_{0}^{\gamma}}
\def\igo{\int_{0}^{\overline{\gamma}}}
\def\eps{\varepsilon}

\def\nn{\mathbb{N}}

\def\cn{(N-2)^2/4}
\def\into{\int_{\Omega}}
\def\hoi{H_{0}^{1}(\Omega)}

\def\intR{\int_{0}^{R}}

\def\d{\partial}
\def\eps{\varepsilon}

\def\be{\begin{equation}}
\def\ee{\end{equation}}

\def\intog{\int_{0}^{\gamma}}

\def \hogt{H_{0}^{1}([0,\gamma],  t^{N-2} dt  )}
\def \loogt{L^{2}([0,\gamma],  t^{N-2} dt  )}
\def \loogdt{L^{2}([0,\gamma],  t^{N-3} dt  )}
\numberwithin{equation}{section} \numberwithin{Th}{section}
\numberwithin{cor}{section} \numberwithin{lema}{section}
\numberwithin{prop}{section} \numberwithin{obs}{section}
\numberwithin{Def}{section}

\title[Hardy inequalities with boundary singularities]{Hardy inequalities with boundary singularities}
\author{\bigskip Cristian Cazacu}
\thanks{\noindent \textit{ E-mail address: cazacu@bcamath.org}\\
Partially supported by the Grant MTM2008-03541 of the MICINN, Spain,
the ERC Advanced Grant FP7-246775 NUMERIWAVES, the Grant PI2010-04
of the Basque Government, and the doctoral fellowship FPU-UAM from
Universidad Aut\'{o}noma de Madrid.}
\begin{document}
\maketitle
\begin{center}
\small{BCAM-Basque Center for Applied Mathematics,
 Biskaia Technology Park, Building 500,\\ E-48160,
Derio-Basque Country-Spain

and

Departamento de Matem\'{a}ticas, Facultad de Ciencias, Universidad
Aut\'{o}noma de Madrid,\\
 28049 Madrid, Spain}
\end{center}

\begin{abstract}
In this work  we prove some Hardy-Poincar\'{e} inequalities
with quadratic singular potentials localized on the boundary of a
smooth domain.  Then, we consider conical domains with vertex on the
singularity and we show upper and lower bounds for the corresponding
optimal constants in the Hardy inequality. In particular, we prove
the asymptotic behavior of the optimal  constant when the slot of
the cone tends to zero.

\end{abstract}
\section{Introduction}
  Hardy inequalities represent a classical
subject in which there has been intensive research in the recent
past, mainly motivated by its applications to Partial Differential
Equations (PDE's) and, more precisely, with the positivity of the
Schr\"{o}dinger operator
\begin{equation}\label{eq165}
L_\lambda:=-\Delta -\frac{\lambda}{|x|^2}, \textrm{ with } \lambda
\in \rr,
\end{equation}
involving the inverse square singular potential $1/|x|^2$.

Inverse square potentials are interesting because of their
criticality since they are homogeneous of degree -2. They often
appear in the linearization of critical nonlinear PDE's playing a
crucial role in the asymptotic behaviour of branches of solutions in
bifurcation problems (e.g. \cite{MR1605678}, \cite{MR1616516},
\cite{MR1760280}). \noindent The operator $L_\lambda$ arises in
physics and in particular  in the relativity theory and quantum
mechanics \cite{simon}. We also mention other interesting
applications in molecular physics \cite{pot1}, quantum cosmology
\cite{pot2}, combustion models \cite{MR1616905}, brownian motion
\cite{David}, etc.

In \cite{hard}, G. H Hardy proved  that, in the one dimensional
case, the optimal inequality
\begin{equation}\label{eq166} \int_{0}^{\infty}|u'(r)|^2dr\geq
\frac{1}{4}\int_{0}^{\infty}\frac{u^2(r)}{r^2}dr,
\end{equation}
holds for
 functions belonging to $H_{0}^{1}(0,\infty)$. More precisely,
\begin{equation}\label{eq113}
\inf_{u\in H_{0}^{1}(0,\infty)}\Big(\int_{0}^{\infty}|u'(r)|^2dr
\Big/\int_{0}^{\infty}\frac{u^2}{r^2}dr \Big)=\frac{1}{4}.
\end{equation}
The classical multi-dimensional Hardy inequality (cf.
\cite{hardy-polya}) asserts that for any $\Omega$ an open subset of
$\rr^N$, $N\geq 3$,  it holds that
\begin{equation}{\label{eq167}}
\int_{\Omega}|\nabla u|^2dx\geq
\frac{(N-2)^2}{4}\int_{\Omega}\frac{u^2}{|x|^2}dx,
\end{equation}
for all $u\in H_{0}^{1}(\Omega)$. Moreover, if $\Omega$ contains the
origin, the constant $(N-2)^2/4$ is optimal and it is not attained.
For $N=2$, inequality \eqref{eq167} is trivially true.

The reader interested in the existing literature on the extensions
of the classical Hardy inequality  is referred, in particular, to
the following papers and the references therein: \cite{MR1616905},
\cite{adimurthi}, \cite{MR768824},
 \cite{barbatis},
\cite{filipas}, \cite{MR2099546},
\cite{tintarev}. Recently, improved versions of (\ref{eq167}) have
been established in open bounded domains containing the origin (see
\cite{MR1605678}, \cite{MR1760280}, \cite{adimurthi1}). We also
mention the papers \cite{MR2379440}, \cite{MR2209764} and the
references therein for discussing  inequalities with multipolar
singularities. There has been also an intensive research for
singular potentials involving the distance to the boundary (e.g.
\cite{marcus}, \cite{marcus2}).

\vspace{0.5cm}

 However, Hardy inequalities with one singular potential,  in which the singularity
 lies
 on the boundary
have been less investigated. This paper is mainly devoted to analyze
this issue. To be more precise, throughout the paper, we consider
$\Omega$ to be a subset of $\rr^N$ with the origin $x=0$ placed on
its boundary $\pa\Omega$, where the singularity is located. We then
define $\mu(\Omega)$ as the best constant
 in the inequality
\begin{equation}\label{eq143}
\int_{\Omega}|\nabla u|^2dx \geq
\mu(\Omega)\int_{\Omega}\frac{u^2}{|x|^2}dx, \quad \forall \quad
u\in \hoi,
\end{equation}
i.e.  $$\mu(\Omega):=\inf\Big\{\int_{\Omega}|\nabla u|^2dx\Big/
\int_{\Omega}u^2/|x|^2dx,\  u\in H_{0}^{1}(\Omega)\Big\}.$$ Of
course, in view of  ({eq167}),  $\mu(\Omega)\geq \cn$. The authors
in \cite{musina} showed that the strict inequality
$\mu(\Omega)>(N-2)^2/4$ holds true
 when $\Omega$ is a bounded domain of class $C^2$.  
Actually, the value  $\mu(\Omega)$ depends on the geometric
properties of the boundary $\pa \Omega$ at the singularity. The
first explicit case has been given for $\Omega=\rr_{+}^{N}$, where
$\rr_+^N$ is the half-space of $\rr^N$ in which the condition
$x_N>0$ holds. More precisely,  for any $N\geq 1$, Filippas,
Tertikas and Tidblom proved in \cite{terti} the new Hardy
inequality:
\begin{equation}\label{equ1}
\int_{\rr_+^N}|\nabla u|^2dx\geq
\frac{N^2}{4}\int_{\rr_+^N}\frac{u^2}{|x|^2}dx \quad \forall\quad
u\in H_{0}^{1}(\rr_+^N).
\end{equation}
Moreover, they proved the constant $N^2/4$ to be optimal (cf.
Corollary 2.4, pp. 12, \cite{terti}) i.e. $\mu(\rr_{+}^{N})=N^2/4$.

As a consequence of this result, mainly by the invariance under
dilatations, it holds that $\mu(\Omega)=N^2/4$ for any domain
$\Omega$ of class $C^2$ with the support in the half-space
$\rr_{+}^{N}$.
 Moreover, if $\Omega$ is a bounded domain contained in the
half-space, the following improved Hardy-Poincar\'{e} holds for any
$u\in \hoi$ (cf. \cite{musina}):
\begin{equation}\label{HPI}
\into |\nabla u|^2 dx -\frac{N^2}{4}\into \frac{u^2}{|x|^2}dx \geq
\frac{\lambda(\mathbb{D})}{|\textrm{diam}(\Omega)|^2}\into u^2 dx,
\end{equation}
where $\lambda_1(\Omega)$ is the first eigenvalue of the Laplacian
in the unit ball in 2-d.

Another interesting situation  appears in non-smooth domains
$\Omega$, when the boundary develops corners or cusps at the
singularity. The most relevant example of such a domain is
represented by a cone with the vertex at the origin $x=0$. The
question of studying the exact value of $\mu(\Omega)$ in cones has
been full-filled in 2-d case. More precisely, if
$\mathcal{C}_{\gamma}$ is the conical sector with the slot
$\gamma\in (0, 2\pi)$, then (cf. \cite{caldiroli1})
$$\mu(\mathcal{C}_{\gamma})=\frac{\pi^2}{\gamma^2}.$$
By our knowledge, for higher dimensions $N\geq 3$, the value
$\mu(\Omega)$ is only known when the cone $\Omega$ coincides with
the half-space $\rr_{+}^{N}$.

In this paper, the aim we focus on is two folded.

  Roughly
speaking, in the first part, we improve some results in
\cite{musina} addressing Hardy-Poincar\'{e} inequalities in smooth
domains. Besides, we complete rigorous proofs of several results
stated in \cite{musina}. In the second part, the new issue addressed
is related to optimal Hardy inequalities in conical domains in
dimensions $N\geq 3$.

Part of the results in this paper were first announced in
\cite{cristi}. Soon after that, the preprint \cite{musina} has been
submitted for publication while this extended version of the paper
was being prepared. Because  \cite{musina} yields some similar
results, we thus present here in a detailed manner the most novel
aspects of the note \cite{cristi} not addressed in \cite{musina}.

  Let us now resume the content of the
paper and the main results we obtain. 
In Section \ref{Sec1} we show Hardy-Poincar\'{e} inequalities in
bounded smooth domains completing and extending some results in
\cite{musina}. We mainly refer  to Theorems \ref{tu2}, \ref{tu3},
\ref{tu7}. These results turn out to be closely related to the
ellipticity of $\Omega$ at the origin, but also to the global
geometry of $\Omega$. When $\Omega$ is not elliptic at the origin, a
weaker Hardy inequality holds (cf. Theorem \ref{ta5}) and  the
continuous dependence of the Hardy constant in cones is required in
the proof. This last result is rigorously shown in Section
\ref{Sec2}, Theorem \ref{taa3}. Besides,
  in Section \ref{Sec2} we prove lower and upper bounds for
the optimal constant $\mu(\Omega)$, when $\Omega$ is a cone in
dimensions $N\geq3$, with the vertex in $x=0$. In particular,  the
asymptotic value of $\mu(\Omega)$ is shown when the slot of the cone
tends to zero (see Remark \ref{asymptotic}). We point out that all
the sections contain at least a small introduction at the beginning.
In Section \ref{sec4} we conclude with some comments.

\section{Inequalities in smooth domains}\label{Sec1}

As we said above, the value of the optimal constant $\mu(\Omega)$
depends on the various geometric properties of $\Omega$. In this
section we assume $\Omega$ to be a Lipschitz domain with smooth
boundary around the origin.
  Then $\pa \Omega$ is an $(N-1)$-Riemannian submanifold of
$\rr^N$ and assume that $\alpha_1,\alpha_2,...,\alpha_{N-1}$ are the
principal curvatures of $\pa \Omega$ at 0. Then, up to a rotation
(cf. \cite{boothby}, \cite{gosu}), the boundary near the origin can
be written as
\begin{equation}\label{eq116}
x_N=h(x')=\sum_{i=1}^{N-1}\alpha_ix_{i}^{2}+o(|x'|^2) \textrm{ as }
|x'|\rightarrow 0,
\end{equation}
where $x'=(x_1,...,x_{N-1},0)$.  It is well-known that the principal
curvatures are the eigenvalues of the $2nd$ fundamental form of the
surface $\pa \Omega$ (cf. \cite{gallot}). If we choose
\begin{equation}\label{eq117}
\gamma <\min\{\alpha_i: 1\leq i\leq N\},
\end{equation}
then $x_N> \gamma|x'|^2$ in $\Omega$ for any $(x_N,x')\in\rr^N$ very
close to origin. Such points belong to the  paraboloid $P_\gamma$
defined by
\begin{equation}\label{eq115}
P_\gamma=\{x=(x',x_N)\in \rr^N\ |\  x_N>\gamma|x'|^2\}.
\end{equation}
 Due to the considerations above, we distinguish the following four main situations.

\begin{enumerate}[C1.]
\item\label{eqq2} {\bf The elliptic case:} There exists  $\gamma>0$ such that
 $\Omega\subset P_\gamma$. In other words, $\Omega$
 lies in the corresponding elliptic paraboloid $P_\gamma$
(see Figure \ref{fig4}, top left).
 \item\label{eqq1}  {\bf The cylindrical case: }  $\Omega\subset P_0$, where $P_0=\rr_{+}^{N}$.
(see Figure \ref{fig4}, top right).
\begin{obs}\label{obs1}
 In cases $\textsc{C}\ref{eqq2}$ and $\textsc{C}\ref{eqq1}$,  $\Omega$
lies in $\rr_{+}^{N}$
(see Figure \ref{fig4}, top). From this point of view they may be
analyzed together and the results that are true in
$\textsc{C}\ref{eqq1}$ are also valid in $\textsc{C}\ref{eqq2}$.
However, we analyze them separately because we present two
independent tools to treat each of them.
\end{obs}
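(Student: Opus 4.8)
The plan is to reduce the whole remark to one elementary inclusion of sets, namely $P_\gamma\subseteq P_0=\rr_{+}^{N}$ for every $\gamma\ge 0$, and then do the bookkeeping. First I would unwind the definition \eqref{eq115}: a point $x=(x',x_N)$ lies in $P_\gamma$ precisely when $x_N>\gamma|x'|^2$. For $\gamma\ge 0$ this yields $x_N>\gamma|x'|^2\ge 0$, so $x_N>0$ and hence $x\in P_0$; in fact the family $\{P_\gamma\}_{\gamma\ge 0}$ is nonincreasing in $\gamma$, since $\gamma_1\le\gamma_2$ forces $\gamma_1|x'|^2\le\gamma_2|x'|^2$ and thus $P_{\gamma_2}\subseteq P_{\gamma_1}$. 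In particular $P_\gamma\subseteq P_0=\rr_{+}^{N}$ for every $\gamma>0$.

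From this, the two assertions of the remark are immediate. For the first: in the elliptic case $\textsc{C}\ref{eqq2}$ one has, by definition, $\Omega\subseteq P_\gamma$ for some $\gamma>0$, and combining this with $P_\gamma\subseteq\rr_{+}^{N}$ gives $\Omega\subseteq\rr_{+}^{N}$; in the cylindrical case $\textsc{C}\ref{eqq1}$ the inclusion $\Omega\subseteq\rr_{+}^{N}$ is nothing but the definition, since $P_0=\rr_{+}^{N}$. So in both cases $\Omega$ (with the origin on $\pa\Omega$) is a subdomain of the half-space. For the second: every statement we establish in the cylindrical case --- such as the identity $\mu(\Omega)=N^2/4$ obtained by dilation invariance from \eqref{equ1}, or the improved Hardy--Poincar\'e inequality \eqref{HPI} --- has, apart from the standing regularity/boundedness hypotheses, the single geometric hypothesis $\Omega\subseteq\rr_{+}^{N}$. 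Since an elliptic domain satisfies this hypothesis by the previous sentence, each such statement applies verbatim to $\textsc{C}\ref{eqq2}$.

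The main obstacle, honestly, is that there isn't one: the proof is the chain $\Omega\subseteq P_\gamma\subseteq P_0=\rr_{+}^{N}$ together with the observation that the hypotheses of the cylindrical-case results are a special case of those in $\textsc{C}\ref{eqq2}$. The only point warranting a moment's attention is the strictness of the inequality defining $P_\gamma$: at $x'=0$ it reads $x_N>0$, so one does land in the open half-space and not merely in its closure. The reason the two cases are nevertheless treated separately in the sequel is, as the remark states, purely methodological --- the elliptic case is handled through the paraboloid geometry, whereas the cylindrical case rests on the explicit half-space Hardy inequality \eqref{equ1}.
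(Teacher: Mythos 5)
Your argument is correct and coincides with the paper's (implicit) justification: the remark is stated without proof precisely because it reduces to the chain $\Omega\subseteq P_\gamma\subseteq P_0=\rr_{+}^{N}$ for $\gamma\geq 0$, which is exactly what you verify, together with the observation that the cylindrical-case results only use the hypothesis $\Omega\subseteq\rr_{+}^{N}$. Nothing is missing; your remark about the strict inequality $x_N>0$ at $x'=0$ is a fine (if unnecessary) point of care.
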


 \item\label{eqq3} {\bf The locally elliptic case:} In this case,
$\Omega$ does not lie in $\rr_{+}^{N}$, but  this happens near  the
origin. More precisely, we suppose the existence of
$\gamma_{\textrm{local}}\geq 0$ such that $\Omega\subset
P_{\gamma_{\textrm{local}}}$ near  the origin. Away from the origin
we suppose that  there exists  $\gamma<0$ such that $\Omega\subset
P_{\gamma}$ (see Figure \ref{fig4}, bottom left).

\item\label{eqq4} {\bf The hyperbolic case:} This corresponds to the situation
when $\Omega$ has a hyperbolic geometry near the origin $x=0$.
Therefore, we suppose that $\Omega\subset P_\gamma$ for some
negative $\gamma<0$. (see Figure \ref{fig4}, bottom right).
\end{enumerate}

\begin{figure}[h]
\begin{center}
\setlength{\unitlength}{.4cm}
\begin{picture}(16,10)
\linethickness{0.3mm}
  \put(2,3){\vector(1,0){11}}
  \put(7,1.3){\vector(0,1){7.2}}
  \textcolor{blue}{\qbezier(3.2,6.8)(7.25,-1)(11,7)}
   \textcolor{red}{\qbezier(5,4)(7,1.9)(9,4.3)
   \qbezier(5,4)(2.9,6.5)(6.5,7.1)
   \qbezier(6.5,7.1)(11.9,8.6)(9,4.3)}
   \put(7,9.4){\makebox(0,0){\emph{\bf C1: The elliptic case $\gamma\geq0$ }}}
   \put(7.4,4.5){\makebox(0,0){\emph{$\mathbf{\Omega}$}}}
   \put(11.7,2.5){\makebox(0,0){$\mathbf{x_N=0}$}}
   \put(5.4,8.2){\makebox(0,0){$\mathbf{x'=\bf{0}}$}}
   \put(3.1,5.6){\makebox(0,0){$\mathbf{P_\gamma}$}}
   \put(12.7,6.2){\makebox(0,0){$\mathbf{x_N=\gamma|x'|^2}$}}
    \put(6.2,2.3){\bf{0}}
    \end{picture}\quad
 \begin{picture}(14,10)
\linethickness{0.3mm}
 \textcolor[rgb]{0.00,0.00,0.50}{ \put(2.5,3){\vector(1,0){10}}}
  \put(7,1){\vector(0,1){7.8}}
   \textcolor[rgb]{1.00,0.00,0.00}{\qbezier(5,4)(5.8,3.05)(6.5,3)
   \qbezier(5,4)(3,7)(6.5,7.4)
   \qbezier(6.5,7.4)(11.5,9)(9,3)
 }  \put(7,9.5){\makebox(0,0){\emph{\bf C2: The cylindrical case $\gamma=0$ }}}
   \put(7.6,4.5){\makebox(0,0){\emph{$\mathbf{\Omega}$}}}
   \put(11.5,2.5){\makebox(0,0){$\mathbf{x_N=0}$}}
   \put(5.7,8.3){\makebox(0,0){$\mathbf{x'=\bf{0}}$}}
   \put(6.5,2.3){\bf{0}}
  \end{picture}\\
\begin{picture}(16,10)
\linethickness{0.3mm}
  \put(2,3){\vector(1,0){11.3}}
  \put(7.15,1){\vector(0,1){7.8}}
  \textcolor[rgb]{0.25,0.00,0.25}{\qbezier(3,7)(7.3,-1.1)(11,7)}
  \textcolor{red}{
  \qbezier(5,4.1)(7.2,1.8)(9,4.3)
   \qbezier(5,4.1)(4,5)(3,1.5)
   \qbezier(3,1.5)(2.2,0)(4,5)
   \qbezier(11,1.5)(11.9,0)(10,5)
    \qbezier(4,5)(7,11)(10,5)
   \qbezier(9,4.3)(10,5.3)(11,1.5)}
  \textcolor{blue}{ \qbezier(1.9,0.5)(7,5.55)(11.8,0.3)}
   \put(0.5,0){$\mathbf{P_{\gamma}}$}
   \put(10,-0.3){$\mathbf{x_N=\gamma|x'|^2}$}
   \put(7,9.5){\makebox(0,0){\emph{\bf C3: The locally elliptic case $\gamma_{\textrm{loc}}\geq0$}}}
   \put(7.4,4.5){\makebox(0,0){\emph{$\mathbf{\Omega}$}}}
   \put(6.8,2.3){\bf{0}}
   \put(12.3,2.7){\makebox(0,0){$\mathbf{x_N=0}$}}
   \put(5.5,8.4){\makebox(0,0){$\mathbf{x'=\bf{0}}$}}
   \put(2.3,6){\makebox(0,0){$\mathbf{P_{\gamma_{\textrm{\bf loc}}}}$}}
   \put(13,6.5){\makebox(0,0){$\mathbf{x_N=\gamma_{\textrm{\bf loc}}|x'|^2}$}}
  \end{picture}\quad
\begin{picture}(14,10)
\linethickness{0.3mm}
  \put(2,3){\vector(1,0){11}}
  \put(7,1){\vector(0,1){7}}
  \textcolor{blue}{\qbezier(2.6,1.2)(6.9,4.64)(11.5,1.4)}
   \textcolor{red}{\qbezier(4,2.4)(7,3.65)(10,2.3)
   \qbezier(4,2.4)(3.15,1.9)(5.1,5)
   \qbezier(10,2.3)(11,2)(9,5)
   \qbezier(5.1,5)(7.1,7.5)(9,5)}
   \put(2,1.9){$\mathbf{P_\gamma}$}
   \put(6.9,2.3){\bf{0}}
   \put(10,0.7){$\mathbf{x_N=\gamma|x'|^2}$}
   \put(7,9){\makebox(0,0){\emph{\bf C4: The hyperbolic case $\gamma<0$}}}
   \put(7.4,4.5){\makebox(0,0){\emph{$\mathbf{\Omega}$}}}
   \put(12.1,2.6){\makebox(0,0){$\mathbf{x_N=0}$}}
   \put(5.6,7.7){\makebox(0,0){$\mathbf{x'=\bf{0}}$}}
  \end{picture}
\caption{\label{fig4}}
\end{center}
\end{figure}

In the sequel we need  the following technical lemma whose proof is given at the end of the section. 
\begin{lema}\label{l11}
Let $\Omega\subset P_\gamma$ be a domain fulfilling one of the
conditions $\textsc{C}1-\textsc{C}4$ in Figure \ref{fig4}, for some
constant $\gamma\in \rr$.
 Given
$N\geq 2$ and  $v\in H_{0}^{1}(\Omega)$,  for any constant $C\in
\rr$, the function $u$
 defined by
\begin{equation}\label{equ5}
u(x)=\frac{v(x)}{(x_N-\gamma|x'|^2)}|x|^{C}.
\end{equation}
fulfills the following  identity:
\begin{eqnarray}\label{equ6} \int_{\Omega}|\nabla
v|^2dx&=&\int_{\Omega}(x_N-\gamma|x'|^2)^2|x|^{-2C}|\nabla
u|^2dx+(CN-C^2)\int_{\Omega}\frac{v^2}{|x|^2}dx\nonumber\\
&+&2\gamma\int_{\Omega}\Big((N-1)|x|^2-C|x'|^2\Big)(x_N-\gamma|x'|^2)|x|^{-2C-2}u^2dx.
\end{eqnarray}
\end{lema}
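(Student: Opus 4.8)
The plan is to prove the identity \eqref{equ6} by a direct substitution of \eqref{equ5} into the Dirichlet integral, expanding the gradient and integrating by parts the cross term. Write $d(x):=x_N-\gamma|x'|^2$ and $w(x):=|x|^{C}$, so that $v = d\,w^{-1}\,u$ on $\Omega$; note that the defining conditions $\textsc{C}1$--$\textsc{C}4$ guarantee $d>0$ in $\Omega$, so this is a legitimate change of unknown, and $v\in\hoii(\Omega)$ forces the weighted function $u$ to have enough decay at $\pa\Omega$ and at the origin for all the integrals below to be finite (a density argument with $v\in C_c^\infty(\Omega)$ makes every manipulation rigorous, then one passes to the limit). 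Compute
\[
\nabla v = \frac{d}{w}\nabla u + \Big(\frac{\nabla d}{w}-\frac{d\,\nabla w}{w^2}\Big)u,
\]
so that
\[
|\nabla v|^2 = \frac{d^2}{w^2}|\nabla u|^2 + 2\frac{d}{w}\Big(\frac{\nabla d}{w}-\frac{d\,\nabla w}{w^2}\Big)u\,\nabla u + \Big|\frac{\nabla d}{w}-\frac{d\,\nabla w}{w^2}\Big|^2 u^2.
\]
The first term already matches the first term on the right-hand side of \eqref{equ6}, namely $\int_\Omega d^2|x|^{-2C}|\nabla u|^2\,dx$.

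The key step is to rewrite the middle (cross) term using the elementary identity $\tfrac{d}{w}u\,\nabla u = \tfrac12 \tfrac{d}{w}\nabla(u^2)$ and integrate by parts. Concretely,
\[
\int_\Omega 2\frac{d}{w}\Big(\frac{\nabla d}{w}-\frac{d\,\nabla w}{w^2}\Big)u\,\nabla u\,dx
= \int_\Omega \Big(\frac{d\,\nabla d}{w^2}-\frac{d^2\,\nabla w}{w^3}\Big)\cdot\nabla(u^2)\,dx
= -\int_\Omega \operatorname{div}\Big(\frac{d\,\nabla d}{w^2}-\frac{d^2\,\nabla w}{w^3}\Big)u^2\,dx,
\]
the boundary term vanishing because $u^2$ is compactly supported in $\Omega$ in the approximating step. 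One then collects this with the zeroth-order term $\big|\tfrac{\nabla d}{w}-\tfrac{d\,\nabla w}{w^2}\big|^2 u^2$ and shows the combined coefficient of $u^2$ equals
\[
(CN-C^2)\frac{d^2}{|x|^2 w^2}\cdot\frac{w^2}{d^2}\cdot\frac{?}{?}
\]
— more precisely, after expanding, the $u^2$-coefficient must simplify to $(CN-C^2)|x|^{-2} + 2\gamma\big((N-1)|x|^2 - C|x'|^2\big)d\,|x|^{-2C-2}$. This is the heart of the computation and where I expect the main obstacle to lie: it is a bookkeeping exercise using $\nabla w = C|x|^{C-2}x$, $\Delta(|x|^C) = C(C+N-2)|x|^{C-2}$, $\nabla d = (-2\gamma x', 1)$, $\Delta d = -2\gamma(N-1)$, $|\nabla d|^2 = 1 + 4\gamma^2|x'|^2$, and $x\cdot\nabla d = x_N - 2\gamma|x'|^2 = d - \gamma|x'|^2$. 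The term $d^2/w^2$ divided through shows why the answer organizes into exactly the two weighted pieces claimed.

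To carry this out I would expand $\operatorname{div}\big(d\,\nabla d\, w^{-2}\big) = |\nabla d|^2 w^{-2} + d\,\Delta d\, w^{-2} + d\,\nabla d\cdot\nabla(w^{-2})$ and $\operatorname{div}\big(d^2\,\nabla w\, w^{-3}\big) = 2d\,\nabla d\cdot\nabla w\, w^{-3} + d^2\,\Delta w\, w^{-3} + d^2\,\nabla w\cdot\nabla(w^{-3})$, then combine with $\big|\nabla d\,w^{-1} - d\,\nabla w\,w^{-2}\big|^2 = |\nabla d|^2 w^{-2} - 2d\,\nabla d\cdot\nabla w\, w^{-3} + d^2|\nabla w|^2 w^{-4}$. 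Many terms cancel: the two copies of $|\nabla d|^2 w^{-2}$ cancel against each other after the sign from integration by parts, the $\nabla d\cdot\nabla w$ cross terms regroup, and one is left with the $|x|$-homogeneous combination $C(N-C)|x|^{-2}$ coming from the $\Delta(|x|^C)$ and $|\nabla(|x|^C)|^2$ contributions plus the genuinely geometric $\gamma$-term coming from $\Delta d$ and $\nabla d\cdot\nabla w$. A useful sanity check along the way: setting $\gamma=0$ reduces \eqref{equ6} to the familiar ground-state substitution identity in the half-space, and setting $C=0$ reduces it to the factorization $v = d\,u$ used to pass between $\Omega$ and its ``paraboloid model,'' so both degenerate cases should drop out transparently. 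Finally, I would remove the compact-support assumption by approximating $v$ in $\hoii(\Omega)$ and noting all three integrals on the right are continuous for this class, which closes the proof.
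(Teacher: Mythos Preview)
Your approach is essentially the same as the paper's: write $v = d\,|x|^{-C}u$, expand $|\nabla v|^2$ into the square term, the cross term, and the zeroth-order term, integrate the cross term by parts, and collect. The paper carries out exactly this computation, with the divergence and squared-gradient of $(x_N-\gamma|x'|^2)|x|^{-C}$ written out explicitly (your equations for $|\nabla d|^2$, $\Delta d$, $x\cdot\nabla d$, $\Delta(|x|^C)$ are precisely what is needed). The only noteworthy difference is in the justification of the vanishing boundary term: you invoke density and work with $v\in C_c^\infty(\Omega)$, whereas the paper works directly in $H_0^1(\Omega)$ and appeals to a separate lemma showing $v^2/(x_N-\gamma|x'|^2)\to 0$ at $\partial P_\gamma$; both are legitimate, and your route is arguably cleaner provided you check that the three right-hand integrals are stable under $H_0^1$-approximation (the $v^2/|x|^2$ term is immediate, the $\gamma$-term is controlled by $\int v^2/d$ via one-dimensional Hardy in the $x_N$-direction, and the gradient term is then forced by the identity).
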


\subsection{Proof of main results}
Next, we consider  $\Omega$ to be a domain which satisfies the case
C\ref{eqq2} in Figure \ref{fig4} (top, left). The main result in
this case consists in an improvement of the Hardy-Poincar\'{e}
inequality \eqref{HPI} shown in \cite{musina}. The proof is based on
Lemma \ref{l11}. More precisely, we have

\begin{Th}\label{tu2} Let $N\geq 3$. Assume that $\Omega$
satisfies the condition $\textsc{C}1$.  Then, for all $v\in
H_0^1(\Omega)$ there exists a positive constant $C(\Omega,\gamma)$
such that
\begin{equation}\label{equ17}
\int_{\Omega}|\nabla v|^2dx-
\frac{N^2}{4}\int_{\Omega}\frac{v^2}{|x|^2}dx\geq
C(\Omega,\gamma)\int_{\Omega}\frac{v^2}{|x|}dx.
\end{equation}
When $N=2$, the following weaker inequality holds
\begin{equation}\label{eqq5}
\int_{\Omega}|\nabla v|^2dx\geq
\frac{N^2}{4}\int_{\Omega}\frac{v^2}{|x|^2}dx.
\end{equation}
\end{Th}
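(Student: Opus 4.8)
The natural approach is to feed Lemma \ref{l11} with the distinguished exponent $C=N/2$, which is the value that maximizes $CN-C^2$ and produces precisely $CN-C^2=N^2/4$. Writing $u=v\,|x|^{N/2}/(x_N-\gamma|x'|^2)$ and moving the term $\frac{N^2}{4}\int_{\Omega}v^2/|x|^2$ to the left, the identity \eqref{equ6} becomes
\begin{align*}
\int_{\Omega}|\nabla v|^2dx-\frac{N^2}{4}\int_{\Omega}\frac{v^2}{|x|^2}dx
&=\int_{\Omega}(x_N-\gamma|x'|^2)^2|x|^{-N}|\nabla u|^2dx\\
&\qquad+2\gamma\int_{\Omega}\Big((N-1)|x|^2-\tfrac{N}{2}|x'|^2\Big)(x_N-\gamma|x'|^2)|x|^{-N-2}u^2dx .
\end{align*}
I would then discard the first term on the right, which is nonnegative since $x_N-\gamma|x'|^2>0$ throughout $\Omega\subset P_\gamma$. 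For the second term, the bound $|x'|\le|x|$ gives $(N-1)|x|^2-\tfrac{N}{2}|x'|^2\ge(\tfrac{N}{2}-1)|x|^2\ge 0$, strictly positive when $N\ge 3$; combined with $\gamma>0$ and $x_N-\gamma|x'|^2>0$, the integrand is bounded below by $\gamma(N-2)\,(x_N-\gamma|x'|^2)|x|^{-N}u^2$.

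Substituting back $u^2=v^2|x|^N/(x_N-\gamma|x'|^2)^2$ collapses this quantity to $\gamma(N-2)\,v^2/(x_N-\gamma|x'|^2)$, so that
\[
\int_{\Omega}|\nabla v|^2dx-\frac{N^2}{4}\int_{\Omega}\frac{v^2}{|x|^2}dx\ \ge\ \gamma(N-2)\int_{\Omega}\frac{v^2}{x_N-\gamma|x'|^2}dx .
\]
Since on $\Omega$ one has $0<x_N-\gamma|x'|^2\le x_N\le|x|$, and hence $1/(x_N-\gamma|x'|^2)\ge 1/|x|$, this yields \eqref{equ17} with the admissible choice $C(\Omega,\gamma)=\gamma(N-2)$ (a sharper estimate of the bracket could presumably improve the constant). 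For $N=2$ the same computation with $C=1$ again gives $CN-C^2=1=N^2/4$, while the bracket in the remainder becomes $(N-1)|x|^2-|x'|^2=|x|^2-|x'|^2=x_N^2\ge 0$; both remainder terms are then nonnegative and \eqref{eqq5} follows at once.

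There is no genuine obstacle here: the argument rests entirely on Lemma \ref{l11} together with the elementary pointwise bounds $|x'|\le|x|$ and $x_N-\gamma|x'|^2\le|x|$ valid on every $\Omega$ of type $\textsc{C}1$. The only points needing a word of care are that all weighted integrals involved are finite — which is exactly what Lemma \ref{l11} guarantees for $v\in H_0^1(\Omega)$ — and that the discarded gradient term, used solely as a lower bound, may harmlessly be $+\infty$ a priori. The real content of the statement is thus the sign of the ``curvature'' remainder in Lemma \ref{l11}, which is favorable precisely in the elliptic regime $\gamma>0$.
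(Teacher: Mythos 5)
Your argument is correct and follows essentially the paper's own route: you feed Lemma \ref{l11} the exponent $C=N/2$, drop the nonnegative gradient term, and use $(N-1)|x|^2-\tfrac{N}{2}|x'|^2\ge\tfrac{N-2}{2}|x|^2$, arriving exactly at the paper's intermediate inequality \eqref{equ18}, with the $N=2$ case handled the same trivial way. The only divergence is the final step: where the paper splits $\int_\Omega v^2/(x_N-\gamma|x'|^2)\,dx$ over the regions $|x'|\le 1/\gamma$ and $|x'|\ge 1/\gamma$ and ends with the constant $\gamma(N-2)\min\{\tfrac12,\,1/(\gamma(R_\Omega+\gamma R_\Omega^2))\}$, you invoke the pointwise bound $0<x_N-\gamma|x'|^2\le x_N\le|x|$, valid on all of $\Omega\subset P_\gamma$ since $\gamma>0$, which is legitimate, shorter, and even yields the cleaner admissible constant $C(\Omega,\gamma)=\gamma(N-2)$.
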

\begin{proof}[Proof of Theorem \ref{tu2}]
We put $C=N/2$ in the identity (\ref{equ6}) of Lemma \ref{l11}.
Taking into account that
$$\max_{C\in \rr}\big\{CN-C^2\big\}=\big\{CN-C^2\big\}\Big|_{C=N/2}=\frac{N^2}{4},$$
we obtain
\begin{equation}\label{eqqu1}
\into |\nabla v|^2dx\geq \frac{N^2}{4}\into
\frac{v^2}{|x|^2}dx+2\gamma\into
\Big((N-1)|x|^2-\frac{N}{2}|x'|^2\Big)(x_N-\gamma|x'|^2)|x|^{-N-2}u^2dx
\end{equation}
Using that
$$(N-1)|x|^2-\frac{N}{2}|x'|^2\geq \frac{N-2}{2}|x|^2,$$
we get
\begin{equation}\label{equ18}
\int_{\Omega}|\nabla v|^2dx\geq
\frac{N^2}{4}\int_{\Omega}\frac{v^2}{|x|^2}dx+\gamma(N-2)\int_{\Omega}\frac{v^2(x)}{x_N-\gamma|x'|^2}dx.
\end{equation}
We split the last term in two parts as follows:
\begin{align*}
\int_{\Omega}\frac{v^2}{x_N-\gamma|x'|^2}dx&=& \int_{\{x\in \Omega,\
|x'|\leq 1/\gamma\}}\frac{v^2}{x_N-\gamma|x'|^2}dx+\int_{\{x\in
\Omega,\ |x'|\geq 1/\gamma\}}\frac{v^2}{x_N-\gamma|x'|^2}dx:=I_1+I_2
\end{align*}
In the first term, using that $|x'|\leq 1/\gamma$ implies
$x_N-\gamma|x'|^2\leq 2|x|$, we obtain
\begin{align*}
I_1\geq \frac{1}{2}\int_{\{|x'|\leq1/\gamma\}}\frac{v^2}{|x|}dx.
\end{align*}
Using the notation $R_\Omega=\sup_{x\in \overline{\Omega}}|x|$ we
have $x_N-\gamma|x'|^2\leq R_\Omega+\gamma R_{\Omega}^2.$ Thus, for
the second term we obtain
\begin{align*}
 I_2 \geq  \frac{1}{R_\Omega+\gamma
R_{\Omega}^{2}}\int_{\{|x'|\geq 1/\gamma\}} v^2dx\geq
\frac{1}{\gamma(R_\Omega+\gamma R_{\Omega}^{2})}\int_{\{|x'|\geq
1/\gamma\}}\frac{v^2}{|x|}dx
\end{align*}
Combining these two lower bounds we get
\begin{align*}
I_1+I_2\geq \min\Big\{\frac{1}{2}, \frac{1}{\gamma(R_\Omega+\gamma
R_{\Omega}^{2})}\Big\}\int_{\Omega}\frac{v^2}{|x|}dx,
\end{align*}

and this, together with (\ref{equ18}) yields (\ref{equ17}). For
$N=2$, (\ref{eqq5}) holds easily from (\ref{equ18}).
\end{proof}

Lemma \ref{l11}  does not provide sufficient information for
$\gamma=0$. However, using spherical harmonics decomposition, we can
extend and improve the result of Theorem \ref{tu2} to the case
$\gamma\geq 0$ as follows.

\begin{Th}\label{tu3} Let $N\geq 2$, and $\Omega\subset \rr^N$ be such that
the condition $\textsc{C}2$ is satisfied in Figure \ref{fig4} (top,
right).  If $L$ is a positive number such that $L> \sup_{x\in
\overline{\Omega}}|x|$, then for any $v\in H_0^1(\Omega)$,
\begin{equation}\label{equ21}
\int_{\Omega}|\nabla v|^2dx\geq
\frac{N^2}{4}\int_{\Omega}\frac{v^2}{|x|^2}dx+\frac{1}{4}\int_{\Omega}\frac{v^2}{|x|^2
\log^2(L/|x|)}dx,
\end{equation}
\end{Th}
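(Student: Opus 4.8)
The plan is to reduce \eqref{equ21}, by a density argument and separation of variables in spherical coordinates, first to a one--dimensional weighted Hardy inequality with a logarithmic improvement, and then to the classical Hardy inequality \eqref{eq166}. By density it suffices to prove \eqref{equ21} for $v\in C_c^\infty(\Omega)$; extending such a $v$ by zero we may regard it as a smooth, compactly supported function on $\rr_+^N$, supported in $\{\delta\le |x|\le R_\Omega\}$ for some $0<\delta<R_\Omega<L$, which vanishes in a neighborhood of the hyperplane $\{x_N=0\}$. Writing $x=r\omega$ with $r>0$ and $\omega$ in the open upper half--sphere $\mathbb{S}^{N-1}_+=\mathbb{S}^{N-1}\cap\{\omega_N>0\}$, for each fixed $r$ the map $\omega\mapsto v(r\omega)$ lies in $C_c^\infty(\mathbb{S}^{N-1}_+)$, so we may expand
\[
v(r\omega)=\sum_{k\ge 1}f_k(r)\,\phi_k(\omega),
\]
where $(\phi_k)_{k\ge 1}$ is an $L^2(\mathbb{S}^{N-1}_+)$--orthonormal basis of \emph{Dirichlet} eigenfunctions of $-\Delta_{\mathbb{S}^{N-1}}$ on $\mathbb{S}^{N-1}_+$, with eigenvalues $0<\lambda_1\le\lambda_2\le\cdots$, and each $f_k\in C_c^\infty(0,L)$. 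The crucial fact is that $\lambda_1=N-1$, the first eigenfunction being proportional to the restriction to $\mathbb{S}^{N-1}$ of the harmonic polynomial $x_N$; hence $\lambda_k\ge N-1$ for all $k$.

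Expressing the three integrals in \eqref{equ21} in spherical coordinates and using Parseval's identity, \eqref{equ21} becomes equivalent to the family of one--dimensional inequalities
\[
\int_0^\infty |f_k'|^2 r^{N-1}\,dr+\lambda_k\int_0^\infty f_k^2\, r^{N-3}\,dr\ \ge\ \frac{N^2}{4}\int_0^\infty f_k^2\, r^{N-3}\,dr+\frac14\int_0^\infty \frac{f_k^2\, r^{N-3}}{\log^2(L/r)}\,dr ,\qquad k\ge 1.
\]
Since $\lambda_k\ge N-1$ and $\frac{N^2}{4}-(N-1)=\frac{(N-2)^2}{4}$, it is enough to prove, for every $f\in C_c^\infty(0,L)$,
\[
\int_0^\infty |f'|^2 r^{N-1}\,dr\ \ge\ \frac{(N-2)^2}{4}\int_0^\infty f^2\, r^{N-3}\,dr+\frac14\int_0^\infty \frac{f^2\, r^{N-3}}{\log^2(L/r)}\,dr .
\]
Substituting $f(r)=r^{-(N-2)/2}g(r)$ with $g\in C_c^\infty(0,L)$, expanding $|f'|^2r^{N-1}$ and integrating the cross term by parts (the boundary terms vanish since $g$ has compact support in $(0,L)$) gives the identity
\[
\int_0^\infty |f'|^2 r^{N-1}\,dr=\frac{(N-2)^2}{4}\int_0^\infty f^2\, r^{N-3}\,dr+\int_0^\infty r\,|g'|^2\,dr ,
\]
so, since $f^2 r^{N-3}=g^2/r$, the claim reduces to $\int_0^\infty r\,|g'(r)|^2\,dr\ge \tfrac14\int_0^\infty g(r)^2\big/\big(r\log^2(L/r)\big)\,dr$. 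Finally the change of variables $s=\log(L/r)$, $g(r)=G(s)$, which maps $C_c^\infty(0,L)$ onto $C_c^\infty(0,\infty)$, turns this into $\int_0^\infty |G'(s)|^2\,ds\ge\tfrac14\int_0^\infty G(s)^2 s^{-2}\,ds$, i.e.\ exactly the classical one--dimensional Hardy inequality \eqref{eq166}. This proves \eqref{equ21}.

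I expect the only genuinely delicate step to be the separation of variables on the half--sphere: one must justify the $H_0^1$ expansion and, above all, observe that because $v$ vanishes on $\pa\rr_+^N$ only \emph{Dirichlet} modes of $-\Delta_{\mathbb{S}^{N-1}}$ occur, so that every eigenvalue entering the sum is $\ge\lambda_1=N-1$. Once that is in place the rest is elementary: exactly the Hardy constant $\tfrac{N^2}{4}$ is recovered from $\tfrac{(N-2)^2}{4}+\lambda_1$, and the substitutions above reduce the logarithmic remainder to \eqref{eq166}.
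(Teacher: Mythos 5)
Your proof is correct, and it follows the same broad strategy as the paper (separation of variables in spherical coordinates, the eigenvalue bound $N-1$, and a one--dimensional logarithmic Hardy inequality), but the technical route differs at both key steps. Where you expand directly on the half--sphere in \emph{Dirichlet} eigenfunctions of $-\Delta_{S^{N-1}}$ and invoke $\lambda_1(S^{N-1}_+)=N-1$ (eigenfunction $\omega_N$), the paper instead performs an odd reflection of $v$ across $\{x_N=0\}$ to a function on the full ball, expands in ordinary spherical harmonics, and uses only that the zero mode vanishes (the spherical average of an odd function is zero), so that every surviving eigenvalue $c_k=k(N+k-2)$, $k\ge 1$, is $\ge N-1$; this spares it from identifying the Dirichlet spectrum of the half--sphere, at the cost of the extension argument. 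For the radial part, the reduction you perform via $f=r^{-(N-2)/2}g$ and then $s=\log(L/r)$, landing exactly on the classical one--dimensional inequality \eqref{eq166}, proves precisely the paper's Lemma \ref{lema2}, which the paper instead establishes directly through the factorization $w=v\log^{1/2}(L/r)$ and an integration by parts. Both routes give the same constants; yours is somewhat more self--contained in the radial step (everything is traced back to \eqref{eq166}), while the paper's reflection trick is the more elementary way to control the angular part. The only points needing the care you already flag are the density reduction to $v\in C_c^\infty(\Omega)$ (pass to the limit with Fatou on the right--hand side) and the term--by--term Parseval identities for $\int|\partial_r v|^2$ and $\int|\nabla_\omega v|^2$, which are justified since $v(r\cdot)\in C_c^\infty(S^{N-1}_+)\subset H_0^1(S^{N-1}_+)$ for each $r$ and all summands are nonnegative.
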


 The following lemma will be necessary  in the proof of Theorem
 \ref{tu3}.
\begin{lema}\label{lema2}
\label{l13} Let $L>R>0$. Then
\begin{equation}\label{eq121}
\intR (w'(r))^2r dr \geq \frac{1}{4}\intR
\frac{w^2}{r^2\log^2(L/r)}rdr,\quad \forall \quad w \in
H_{0}^{1}(0,R).
\end{equation}
\end{lema}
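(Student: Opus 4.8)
The plan is to reduce this one-dimensional weighted Hardy inequality to the known unweighted statement \eqref{eq113} by an explicit substitution that absorbs the logarithmic weight. First I would rewrite the integrals in the radial variable: since $\int_0^R (w')^2 r\,dr$ is the form appearing when one integrates $|\nabla u|^2$ over a ball in $\rr^2$ for a radial $u$, the natural change of variables is the one that straightens out the cone/log geometry. Concretely, set $s=\log(L/r)$, so that as $r$ runs over $(0,R)$ the new variable $s$ runs over $(\log(L/R),\infty)$; then $dr=-r\,ds$ and $r\,dr = -r^2\,ds$, and if we put $\tilde w(s)=w(r)=w(Le^{-s})$ we get $w'(r)=-\tilde w'(s)/r$. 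Substituting, the left-hand side becomes $\int_{\log(L/R)}^\infty (\tilde w'(s))^2\,ds$ and the right-hand side becomes $\tfrac14\int_{\log(L/R)}^\infty \tilde w(s)^2/s^2\,ds$. Since $w\in H_0^1(0,R)$ translates into $\tilde w$ vanishing at $s=\log(L/R)$ (and decaying at $+\infty$), after a further translation $\sigma = s-\log(L/R)\ge 0$ this is exactly the classical Hardy inequality \eqref{eq166}--\eqref{eq113} on $H_0^1(0,\infty)$ with optimal constant $1/4$, noting that $1/s^2 \ge 1/\sigma^2$ is in the wrong direction so one should instead keep the shifted variable and use that the Hardy inequality on a half-line $(a,\infty)$ with the weight $1/s^2$ still holds — indeed, for $a>0$ the inequality $\int_a^\infty (\tilde w')^2 ds \ge \tfrac14\int_a^\infty \tilde w^2/s^2\,ds$ follows directly from the standard Hardy inequality on $(0,\infty)$ by extending $\tilde w$ by zero on $(0,a)$.

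The steps in order: (i) justify the change of variables $r\mapsto s=\log(L/r)$ for $w\in C_c^\infty(0,R)$, which is dense in $H_0^1(0,R)$, so both sides of \eqref{eq121} are continuous under the $H_0^1$ norm after the substitution and it suffices to prove the inequality on the dense class; (ii) compute that the substitution sends $\int_0^R (w')^2 r\,dr$ to $\int_{c}^\infty (\tilde w')^2\,ds$ and $\int_0^R w^2/(r^2\log^2(L/r))\,r\,dr$ to $\int_c^\infty \tilde w^2/s^2\,ds$, where $c=\log(L/R)>0$ because $L>R$; (iii) extend $\tilde w$ by zero to all of $(0,\infty)$ — legitimate since $\tilde w(c)=0$ — and apply \eqref{eq113}, using that adding the nonnegative integral over $(0,c)$ to the right-hand side only strengthens the conclusion; (iv) undo the substitution to recover \eqref{eq121} on $C_c^\infty(0,R)$ and pass to the limit in $H_0^1(0,R)$.

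The only genuinely delicate point is the boundary behaviour at $r=0$, i.e. $s=\infty$: one must check that functions in $H_0^1(0,R)$ (equivalently, that the completion of $C_c^\infty(0,R)$ under $\|w\|^2=\int_0^R (w')^2 r\,dr$) really do produce admissible test functions on the $s$-side, and in particular that no boundary term at infinity is lost when integrating by parts inside the proof of the half-line Hardy inequality. Working with the dense subspace $C_c^\infty(0,R)$ throughout sidesteps this, since such $w$ are compactly supported away from $r=0$, hence the corresponding $\tilde w$ is supported in a bounded interval of $s$ and all manipulations are elementary; the inequality then extends to all of $H_0^1(0,R)$ by density, provided one has first checked that the weighted $L^2$ norm on the right of \eqref{eq121} is controlled (which is exactly what the inequality asserts, so the density argument is: prove it on $C_c^\infty$, then for general $w$ take approximants $w_n\to w$ in the $H_0^1(0,R)$ norm, apply Fatou on the right-hand side and convergence on the left). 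This is the standard template for such weighted Hardy inequalities and I expect the write-up to be short.
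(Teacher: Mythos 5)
Your proposal is correct, but it follows a genuinely different route from the paper. The paper proves the lemma by the ground-state type substitution $w(r)=v(r)\log^{1/2}(L/r)$: expanding $|w'|^2$ gives the three terms $\tfrac{1}{4}v^2 r^{-2}\log^{-1}(L/r)+v_r^2\log(L/r)-\tfrac{1}{r}vv_r$, the cross term integrates to zero because of the boundary conditions, and the remaining term $\int_0^R v_r^2\log(L/r)\,r\,dr$ is nonnegative precisely because $L>R$ makes $\log(L/r)>0$ on $(0,R)$; this one-line computation in fact yields an identity for the deficit, not just the inequality. You instead transport the problem by the logarithmic change of variables $s=\log(L/r)$, which correctly turns $\int_0^R (w')^2 r\,dr$ into $\int_c^\infty(\tilde w')^2\,ds$ and the weighted term into $\tfrac14\int_c^\infty \tilde w^2/s^2\,ds$ with $c=\log(L/R)>0$, and then conclude by extending $\tilde w$ by zero (legitimate since $\tilde w(c)=w(R)=0$) and invoking the classical half-line Hardy inequality \eqref{eq113}; the density/Fatou argument you describe for passing from $C_c^\infty(0,R)$ to $H_0^1(0,R)$ is the standard and adequate way to finish. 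What each approach buys: the paper's argument is shorter and self-contained (no appeal to \eqref{eq113}) and exhibits an explicit nonnegative remainder, while yours explains conceptually where the constant $1/4$ comes from and sidesteps any discussion of the behaviour of $v=w\log^{-1/2}(L/r)$ at the endpoints. One small slip worth noting: your parenthetical claim that the translated comparison goes ``in the wrong direction'' is backwards — with $\sigma=s-c\le s$ one has $1/\sigma^2\ge 1/s^2$, so the translation argument would also work directly — but since you ultimately use the extension-by-zero route, this does not affect the validity of the proof.
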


The proof of Lemma \ref{lema2} is given at the end of the section.

\begin{proof}[Proof of Theorem \ref{tu3}]
Firstly, let us  set
$R=R_{\Omega}:=\sup_{x\in\overline{\Omega}}|x|$. such that
$\Omega\subset B_R^+$ where $B_R^+$ is the half ball of radius $R$
$$B_R^{+}:=\{x\in \rr^N,\ |x|\leq R,\ x_N\geq 0\}.$$
We consider also the lower half ball of radius $R$,
$$B_R^{-}:=\{x\in \rr^N,\
|x|\leq R,\ x_N\leq 0 \}.$$

The proof follows the idea of decomposition in spherical harmonics
(see \cite{MR1760280}). By a density argument we can consider $v\in
C_{0}^{1}(B_R^+)$. Building the odd extension
\begin{equation}\label{equ22}u(x)=u(x_1,x_2,...,x_N):=\left\{\begin{array}{ll}
  v(x_1,x_2,...,x_N), & x\in B_R^+,  \\
  -v(x_1,x_2,...,-x_N), & x\in B_R^-, \\
\end{array}\right.
\end{equation}
we obtain $u\in C_{0}^{1}(B_R)$ and moreover,
\begin{align}\label{equ23}
\int_{B_R^+}|\nabla v|^2dx=\frac{1}{2}\int_{B_R}|\nabla u|^2dx,
\end{align}
\begin{align}
\int_{B_R^+}\frac{v^2}{|x|^2}dx=\frac{1}{2}\int_{B_R}\frac{u^2}{|x|^2}dx.
\end{align}
Next we note that
\begin{equation*}\label{equ24}
\int_{S^{N-1}}u(r,\sigma)d\sigma=0, \quad \forall \quad r\in [0,1].
\end{equation*}
Consider the expansion of $u$ in spherical harmonics
\begin{equation}\label{eq122}
u(x)=u(r,\sigma)=\sum_{k=0}^{\infty}u_k(r)f_k(\sigma).
\end{equation}
Here $(f_k)_{k\geq 0}$ is an ortonormal basis of $L^2(S^{N-1})$
constituted by the eigenvectors of the spherical Laplacian
$\Delta_{S^{N-1}}$ with the corresponding eigenvalues
$c_k=k(N+k-2)$, $k\geq 0$. Here $S^{N-1}$ is the unit sphere with
$(N-1)$-dimensional Hausdorff measure $N\omega_N$, where $\omega_N$
is  the Lebesgue
measure of the unit ball. It is well-known that $f_0$ is a constant. 
Integrating (\ref{eq122}) on $S^{N-1}$ we get
$$u_0(r)=\int_{S^{N-1}}u(r,\sigma)f_0(\sigma)d\sigma=f_0(\sigma)\int_{S^{N-1}}u(r,\sigma)d\sigma=0,$$
Therefore
\begin{equation}\label{eq123}
u(x)=u(r,\sigma)=\sum_{k=1}^{\infty}u_k(r)f_k(\sigma).
\end{equation}
 and  by  Plancherel identity we have
\begin{equation}\label{equ25}
\int_{B_R}
u^2dx=N\omega_N\sum_{k=1}^{\infty}\int_{0}^{R}|u_k(r)|^2r^{N-1}dr.
\end{equation}
 Using the representation of the Laplace operator in spherical coordinates
$$\Delta= -\pa_{r}^{2}-\frac{N-1}{r}\pa_{r}u-\frac{1}{r^2}\Delta_{S^{N-1}},$$
 we get
\begin{align}\label{equ26}
\int_{B_R}|\nabla u|^2dx=N\omega_N \sum_{k=1}^{\infty}\int_{0}^{R}
\Big[|u_{k}^{'}|^2+c_k\frac{u_{k}^{2}(r)}{r^2}\Big]r^{N-1}dr.
\end{align}
Let us denote $w_k(r)=u_k(r)r^{\frac{N-2}{2}}$ and $C_N:=(N-2)^2/4$.
Then by (\ref{equ26}) we have
\begin{align}\label{eq111}
\int_{B_R}|\nabla
u|^2dx&-\frac{N^2}{4}\int_{B_R}\frac{u^2}{|x|^2}dx=
N\omega_N\sum_{k=1}^{\infty}\int_{0}^{R}\Big[|u_{k}^{'}|^2-C_N\frac{u_k^2}{r^2}\Big]r^{N-1}dr+\nonumber\\
&+N\omega_N\sum_{k=1}^{\infty}\Big(c_k-(N-1)\Big)\int_{0}^{R}\frac{u_{k}^{2}}{r^2}r^{N-1}dr\nonumber\\
&\geq
N\omega_N\sum_{k=1}^{\infty}\int_{0}^{R}\Big[|u_{k}^{'}|^2-C_N\frac{u_k^2}{r^2}\Big]r^{N-1}dr\nonumber\\
&=N\omega_N\sum_{k=1}^{\infty}\int_{0}^{R}|w_k'(r)|^2rdr
\end{align}
Hence, by (\ref{eq111}) and Lemma \ref{l13} we have
\begin{align}\label{eqn4}
\int_{B_R}|\nabla u|^2dx-\frac{N^2}{4}\int_{B_R}\frac{u^2}{|x|^2}dx
&\geq \frac{N\omega_N}{4}\sum_{k=1}^{\infty}
\int_{0}^{R}\frac{w_{k}^{2}(r)}{r^2
\log^2(L/r)}rdr\nonumber\\
&=\frac{N\omega_N}{4}
\sum_{k=1}^{\infty}\int_{0}^{R}\frac{u_{k}^{2}(r)}{r^2 \log^2(L/r)}
r^{N-1}dr.
\end{align}
On the other hand
\begin{align}\label{eq106}
N\omega_N\sum_{k=1}^{\infty}\int_{0}^{R}\frac{u_k^2(r)}{(r^2\log^2(L/r)}r^{N-1}dr
=\int_{B_R}&\frac{u^2}{|x|^2\log^2(L/|x|)}dx.
\end{align}
 By (\ref{eqn4}), (\ref{eq106}) and undoing the variables, the proof is completed.
\end{proof}
\begin{Th}\label{tu7} Let  $N\geq 2$ and $\Omega$ be a domain
satisfying the case $\textsc{C}\ref{eqq3}$ as in Figure \ref{fig4}
(bottom, left). Then, for any $v\in \hoi$, there exists a constant
$C(\Omega)$ such that
\begin{equation}\label{equ36}
C(\Omega)\int_{\Omega}v^2dx+\int_{\Omega}|\nabla
v|^2dx\geq\frac{N^2}{4}\int_{\Omega}\frac{v^2}{|x|^2}dx+\frac{1}{4}\int_{\Omega}\frac{v^2}{|x|^2\log^2(L/|x|)}dx,
\end{equation}
where $L> \sup_{x\in \overline{\Omega}}|x|$. Moreover, \eqref{equ36}
is optimal in the sense that the term corresponding to the
$L^2$-norm on the left hand side term  cannot be disregarded.
\end{Th}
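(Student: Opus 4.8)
The plan is to establish \eqref{equ36} by localizing near the singularity, where $\Omega$ looks like a subset of the half‑space so that Theorem \ref{tu3} applies, and then to prove the optimality by a blow‑up (dilation) argument producing, inside the dilates of $\Omega$, a cone strictly wider than $\rr_+^N$.

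For the inequality, I would first fix $\rho>0$ small enough that $\Omega\cap B_\rho\subset P_{\gamma_{\textrm{local}}}$, which is possible by the very definition of case $\textsc{C}\ref{eqq3}$; since $\gamma_{\textrm{local}}\ge 0$ this forces $\Omega\cap B_\rho\subset\rr_+^N$, i.e. $\Omega\cap B_\rho$ is a domain of type $\textsc{C}\ref{eqq1}$. Then I would pick a cutoff $\phi\in C_c^\infty(B_\rho)$ with $0\le\phi\le1$ and $\phi\equiv1$ on $B_{\rho/2}$, so that $\phi v\in H_0^1(\Omega\cap B_\rho)$, and since $\sup_{\overline{\Omega\cap B_\rho}}|x|\le\sup_{\overline\Omega}|x|<L$, Theorem \ref{tu3} applied on $\Omega\cap B_\rho$ gives
\[
\int_\Omega|\nabla(\phi v)|^2dx\ \ge\ \frac{N^2}{4}\int_\Omega\frac{\phi^2v^2}{|x|^2}dx+\frac14\int_\Omega\frac{\phi^2v^2}{|x|^2\log^2(L/|x|)}dx .
\]
Two routine facts then close the argument. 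First, integrating by parts (the boundary terms vanishing because $v\in\hoi$), $\int_\Omega|\nabla(\phi v)|^2dx=\int_\Omega\phi^2|\nabla v|^2dx-\int_\Omega\phi\,\Delta\phi\,v^2dx\le\int_\Omega|\nabla v|^2dx+\|\phi\,\Delta\phi\|_\infty\int_\Omega v^2dx$. Second, on the set $\{\phi<1\}\subset\{|x|>\rho/2\}$ both weights $|x|^{-2}$ and $|x|^{-2}\log^{-2}(L/|x|)$ are bounded (the latter because there $\rho/2<|x|<\sup_{\overline\Omega}|x|<L$, so $\log(L/|x|)$ stays bounded away from $0$), whence
\[
\frac{N^2}{4}\int_\Omega\frac{(1-\phi^2)v^2}{|x|^2}dx+\frac14\int_\Omega\frac{(1-\phi^2)v^2}{|x|^2\log^2(L/|x|)}dx\ \le\ C_2\int_\Omega v^2dx .
\]
Adding the three displays yields \eqref{equ36} with $C(\Omega)=\|\phi\,\Delta\phi\|_\infty+C_2$.

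For the optimality I would argue by contradiction: assume \eqref{equ36} holds with $C(\Omega)=0$. Discarding the nonnegative logarithmic term, this says $\int_\Omega|\nabla v|^2dx\ge\frac{N^2}{4}\int_\Omega v^2|x|^{-2}dx$ for all $v\in\hoi$, i.e. $\mu(\Omega)\ge N^2/4$. Since the Hardy quotient is dilation invariant, $\mu(t\Omega)=\mu(\Omega)\ge N^2/4$ for every $t>0$; and as every $w\in C_c^\infty(\mathcal K)$, with $\mathcal K:=\bigcup_{t>0}t\Omega$, has compact support lying in some $t\Omega$, a density argument gives $\mu(\mathcal K)\ge N^2/4$. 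Now $\mathcal K$ is a connected open cone (recall $\Omega$ is a domain) which contains $\rr_+^N$, the tangent half‑cone of $\Omega$ at $0$, by the local ellipticity, and which also contains the ray through a point $p\in\Omega$ with $p_N<0$ — such $p$ exists precisely because in case $\textsc{C}\ref{eqq3}$ the domain is not contained in $\rr_+^N$. Hence $\Sigma:=\mathcal K\cap S^{N-1}$ is a connected open subset of $S^{N-1}$ strictly containing the open upper hemisphere $S_+^{N-1}$, so by strict monotonicity of the first Dirichlet eigenvalue, $\lambda_1(\Sigma)<\lambda_1(S_+^{N-1})=N-1$. Testing the Hardy quotient on $\mathcal K$ with $u_R(x)=|x|^{-(N-2)/2}\psi(x/|x|)\eta_R(|x|)$, where $\psi$ is a first eigenfunction of $\Sigma$ and $\eta_R$ a logarithmic cutoff, and letting $R\to\infty$, gives $\mu(\mathcal K)\le\frac{(N-2)^2}{4}+\lambda_1(\Sigma)<\frac{(N-2)^2}{4}+(N-1)=\frac{N^2}{4}$, contradicting $\mu(\mathcal K)\ge N^2/4$. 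Thus the $L^2$ term cannot be dropped.

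The routine part is the localization; the real work is in the optimality, where the obstacles are (i) verifying that the blow‑up cone $\mathcal K$ is genuinely a cone containing $\rr_+^N$ properly — so that $\Sigma\supsetneq S_+^{N-1}$ — and (ii) the cone/eigenvalue estimate $\mu(\mathcal K)\le\frac{(N-2)^2}{4}+\lambda_1(\Sigma)$, which is the point where the analysis of Hardy constants in conical domains from Section \ref{Sec2} enters. If one prefers to avoid the blow‑up, an alternative would be to construct directly, inside the portion of $\Omega$ lying below $\{x_N=0\}$, a sequence of test functions whose Rayleigh quotient drops below $N^2/4$; but making that quantitative appears harder than the scaling argument above.
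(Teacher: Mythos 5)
Your first half (the inequality \eqref{equ36}) is correct and is essentially the paper's own cut-off argument written out in a cleaner form: instead of splitting $v=v_1+v_2$ and estimating the cross term, you multiply by $\phi$, use $\int_\Omega|\nabla(\phi v)|^2dx=\int_\Omega\phi^2|\nabla v|^2dx-\int_\Omega\phi\,\Delta\phi\,v^2dx$, apply Theorem \ref{tu3} to $\phi v$ on $\Omega\cap B_\rho\subset\rr_+^N$, and absorb both weights on $\{\phi<1\}$ into $C_2\int_\Omega v^2dx$; all of this is fine.

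The optimality half, however, has a genuine gap, and it sits exactly at the step ``every $w\in C_c^\infty(\mathcal{K})$, $\mathcal{K}:=\bigcup_{t>0}t\Omega$, has support in some single $t\Omega$''. The dilates $t\Omega$ are \emph{not} nested, and this covering claim is unjustified and false in general: a point $y$ with $y_N<0$ and $|y|\sim 1$ lies in $t\Omega$ only for $t\le |y|/\rho$ (since $\Omega\cap B_\rho\subset P_{\gamma_{\textrm{local}}}\subset\{x_N\ge 0\}$), while a point at distance $1$ from the origin at small height $\delta>0$ above the hyperplane is reached only by dilates with $t\gtrsim 1/\delta$, because $\partial(t\Omega)$ near $0$ is the graph $x_N=t\,h(x'/t)\approx\alpha|x'|^2/t$; the ``dip'' of $t\Omega$ escapes to infinity as $t\to\infty$. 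So dilation invariance of $\mu(\Omega)$ does not transfer the Hardy inequality to the union cone $\mathcal{K}$, and the contradiction never materializes. Note that your own (correct) computation $\mu(\mathcal{K})\le\frac{(N-2)^2}{4}+\lambda_1(\Sigma)<\frac{N^2}{4}$ (this is \eqref{constantcone}) shows the transferred conclusion $\mu(\mathcal{K})\ge N^2/4$ is always false, so the transfer step is precisely what cannot hold. Moreover, what you are trying to prove is stronger than the theorem asserts: your argument would show $\mu(\Omega)<N^2/4$ for \emph{every} domain of class $\textsc{C}\ref{eqq3}$, a question the paper explicitly lists as open in Section \ref{sec4}. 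The intended (and proved) meaning of optimality is the one given by Proposition \ref{hardysub}: there \emph{exist} domains of class $\textsc{C}\ref{eqq3}$ with $\mu(\Omega)<N^2/4$ — obtained by taking a cone $\mathcal{C}$ slightly wider than $\rr_+^N$, a test function $u\in C_0^\infty(\mathcal{C})$ with Rayleigh quotient below $N^2/4$ (via \eqref{constantcone} and strict eigenvalue monotonicity), and fitting a $\textsc{C}\ref{eqq3}$ domain between $\mathrm{supp}\,u$ and $\mathcal{C}$ — so the $L^2$-term cannot be disregarded for this class of domains; your proof should conclude by invoking or reproducing that construction rather than the blow-up argument.
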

{\bf Sketch of the proof of Theorem \ref{tu7}:}  We apply  a
standard cut-off argument so that the function $v$ can be split as
$v=v_1+v_2$ where $v_1$ lies near the singularity and
 $v_2$ is supported away from it. In the neighborhood of $x=0$,
  we can apply  the improved inequality of Theorem \ref{tu3} corresponding to $v_1$.
Outside the origin there are no singularities so that the potential
$1/|x|^2$ that appears in the inequality, is bounded by a constant
depending only on  $\Omega$ and the profile of the cut-off function.
This fact makes the quantity $\int v_2^2/|x|^2dx$ to be bounded from
above, up to a constant,  by $\int v_2^2dx$. There is also an
intermediate zone that we have to deal with, and more precisely
where the profile of the cut-off functions has the gradient
different by zero. In that part, it suffices to show that the cross
term $\int \nabla v_1\cdot \nabla v_2$ is bounded from below, up to
a constant, by
 $\int v^2dx$. Gluing these,  the proof of \eqref{equ36} ends. We skip all
the computations of the proof but for more details of a cut-off
technique see e.g. \cite{MR1760280}, pp. 111.

 Then, the necessity of the
term in $L^2$-norm on the left hand side of \eqref{equ36} is a
consequence of Proposition \ref{hardysub}.

 In the sequel we consider   a domain $\Omega\subset
\rr^N$ as in  Figure \ref{fig4} (bottom, right). The result we
obtain is stated as follows.
\begin{Th}\label{ta5} Let $N\geq 2$ and assume that $\Omega$
satifies the condition $\textsc{C}4$ as in Figure \ref{fig4}
(bottom, right).  For any $\eps>0$, $\eps <<1$, there exists a
constant $C(\Omega,\eps)$ such that the following inequality holds:
    \begin{equation}\label{equ46}
 C(\Omega,\eps)\into v^2dx+\into |\nabla v|^2dx \geq
 \Big(\frac{N^2}{4}-\eps\Big)\into\frac{v^2}{|x|^2} dx, \quad \forall \quad v\in
 \hoi.
    \end{equation}
Moreover, \eqref{equ46} is optimal in the sense that  the term in
$L^2$-norm of the left hand side term cannot be disregarded.
\end{Th}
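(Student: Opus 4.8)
The plan is to localize near the singularity with a smooth partition, reduce the part supported close to $x=0$ to a Hardy inequality on an infinite cone whose optimal constant is within $\eps$ of $N^2/4$, and dispose of the part supported away from $x=0$ by the trivial bound $|x|^{-2}\le 4/\rho^2$ there. The only substantial ingredient is the continuous dependence of the Hardy constant on cones, Theorem \ref{taa3}; granting it, together with $\mu(\rr_{+}^{N})=N^2/4$ from \cite{terti}, the rest is bookkeeping.

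First I would produce the enclosing cone. Fix $\gamma<0$ with $\Omega\subset P_\gamma$ (condition $\textsc{C}4$) and, for $\rho>0$, let $B_\rho$ be the ball of radius $\rho$ about the origin. If $x\in P_\gamma$ with $|x|<\rho$, then $|x'|\le\rho$, hence $x_N>\gamma|x'|^2\ge\gamma\rho|x'|=-|\gamma|\rho|x'|$; consequently
$$\Omega\cap B_\rho\subset P_\gamma\cap B_\rho\subset K_\rho:=\{x\in\rr^N:\ x_N>-|\gamma|\rho\,|x'|\},$$
and $K_\rho$ is an infinite cone with vertex at $0$ which contains $\rr_{+}^{N}$, is monotone increasing in $\rho$, and shrinks down to $\rr_{+}^{N}$ as $\rho\to0^+$. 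Since $K_\rho$ is a cone, $\mu(K_\rho)$ is the usual (dilation invariant) Hardy constant, and Theorem \ref{taa3} together with $\mu(\rr_{+}^{N})=N^2/4$ gives $\mu(K_\rho)\to N^2/4$ as $\rho\to0^+$. Thus, given $\eps>0$, I fix once and for all $\rho=\rho(\Omega,\eps)>0$ so small that $\mu(K_\rho)\ge N^2/4-\eps$.

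Next comes the localization. Choose $\chi_1,\chi_2\in C^\infty(\rr^N)$ with $\chi_1^2+\chi_2^2\equiv1$, $\chi_1\equiv1$ on $B_{\rho/2}$, and $\chi_1$ supported in $B_\rho$ (for instance $\chi_1=\cos\phi$, $\chi_2=\sin\phi$ with a suitable radial $\phi:\rr^N\to[0,\pi/2]$); put $M_\rho:=\sup_{\rr^N}\big(|\nabla\chi_1|^2+|\nabla\chi_2|^2\big)<\infty$. For $v\in\hoi$ the IMS-type identity
$$\into|\nabla v|^2\,dx=\into|\nabla(\chi_1 v)|^2\,dx+\into|\nabla(\chi_2 v)|^2\,dx-\into\big(|\nabla\chi_1|^2+|\nabla\chi_2|^2\big)v^2\,dx$$
holds. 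The function $\chi_1 v$, extended by $0$, lies in $H_{0}^{1}(K_\rho)$ because $\Omega\cap B_\rho\subset K_\rho$, so by the previous step
$$\into|\nabla(\chi_1 v)|^2\,dx\ \ge\ \Big(\frac{N^2}{4}-\eps\Big)\into\frac{(\chi_1 v)^2}{|x|^2}\,dx,$$
while $\chi_2 v$ vanishes on $B_{\rho/2}$, whence $\big(\tfrac{N^2}{4}-\eps\big)\into(\chi_2 v)^2|x|^{-2}\,dx\le\frac{N^2}{\rho^2}\into v^2\,dx$. Splitting $\into v^2|x|^{-2}\,dx$ by means of $\chi_1^2+\chi_2^2\equiv1$, discarding the nonnegative term $\into|\nabla(\chi_2 v)|^2\,dx$ in the identity, and combining, I obtain \eqref{equ46} with $C(\Omega,\eps):=M_\rho+N^2/\rho^2$, which depends only on $\Omega$ and $\eps$ and blows up as $\eps\to0$ (since then $\rho\to0$).

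Finally, the necessity of the $L^2$-term I would deduce from Proposition \ref{hardysub}, exactly as in the proof of Theorem \ref{tu7}. The genuine difficulty of the argument is isolated in Theorem \ref{taa3}: one must know that as $K_\rho$ shrinks to $\rr_{+}^{N}$ its Hardy constant actually converges (and does not merely remain $\le N^2/4$) to $N^2/4$. Everything else — the geometric inclusion $\Omega\cap B_\rho\subset K_\rho$ and the verification that the resulting constant is independent of $v$ — is routine.
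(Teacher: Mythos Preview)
Your proposal is correct and follows essentially the same strategy as the paper's sketch: approximate $\Omega$ near the origin by a cone slightly larger than $\rr_{+}^{N}$, invoke the continuous dependence of the Hardy constant on cones (the paper cites Corollary~\ref{obaa2}, which is precisely the consequence of Theorem~\ref{taa3} you use), then glue via a cut-off argument and appeal to Proposition~\ref{hardysub} for optimality. Your version is in fact more explicit than the paper's, which only refers to ``cut-off arguments'' and \cite{MR1760280}; your use of the IMS identity with $\chi_1^2+\chi_2^2\equiv1$ is a clean way to carry this out and produces an explicit constant $C(\Omega,\eps)=M_\rho+N^2/\rho^2$.
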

\begin{obs}
Theorem \ref{ta5} is stated also in \cite{musina}. The authors in
\cite{musina} omit to show the continuous dependence of the Hardy
constant in cones which plays a crucial role in the proof of this
theorem. To make the things clear, we give a rigorous proof of this
last result in Section \ref{Sec2}.
\end{obs}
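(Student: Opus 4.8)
\noindent\textbf{Proof strategy for Theorem \ref{ta5}.} The plan is to localize around the singularity by a cut-off and, in a small ball $B_\rho$, to replace $\Omega$ by a circular cone whose half-aperture exceeds $\pi/2$ by only $O(\rho)$. The key point will be that the Hardy constant of such a cone tends to $\mu(\rr_+^N)=N^2/4$ as $\rho\to0$ --- which is precisely the continuous dependence of the Hardy constant on the cone proved later in Theorem \ref{taa3} --- so that one recovers the constant $N^2/4-\eps$ at the price of an $L^2$ remainder. Apart from this, the argument uses only the standard cut-off machinery, as in the sketch of Theorem \ref{tu7}.

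I would first fix $\eps>0$ small and use the hypothesis $\Omega\subset P_\gamma$ with $\gamma<0$: an elementary estimate on the polar angle $\theta=\angle(x,e_N)$ shows that, for $\rho>0$ small, every $x\in\Omega$ with $0<|x|<\rho$ satisfies $x_N/|x|>-|\gamma|\rho$ (if $x_N\ge0$ this is clear; if $x_N<0$, then $|x_N|<|\gamma||x'|^2\le|\gamma|\rho|x'|$, hence $x_N/|x|\ge-|x_N|/|x'|>-|\gamma|\rho$). Therefore $\Omega\cap B_\rho\subset\mathcal{C}_\rho$, where
\[
\mathcal{C}_\rho:=\Big\{x\in\rr^N:\ \angle(x,e_N)<\tfrac{\pi}{2}+\arcsin(|\gamma|\rho)\Big\},
\]
and $\mathcal{C}_\rho\to\rr_+^N$ as $\rho\to0$. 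Invoking Theorem \ref{taa3} together with $\mu(\rr_+^N)=N^2/4$, I would then fix $\rho=\rho(\eps)>0$ so small that $\mu(\mathcal{C}_\rho)\ge N^2/4-\eps$.

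Next I would pick Lipschitz functions $\chi,\eta$ on $\rr^N$ with $\chi^2+\eta^2\equiv1$, $\chi\equiv1$ on $B_{\rho/2}$ and $\mathrm{supp}\,\chi\subset B_\rho$, and split $v=\chi v+\eta v=:v_1+v_2$ for $v\in\hoi$, so that $v_1^2+v_2^2=v^2$. The IMS localization identity
\[
\into|\nabla v|^2dx=\into|\nabla v_1|^2dx+\into|\nabla v_2|^2dx-\into\big(|\nabla\chi|^2+|\nabla\eta|^2\big)v^2dx
\]
then reduces the proof to two estimates: since $v_1$ vanishes outside $\Omega\cap B_\rho\subset\mathcal{C}_\rho$, its extension by zero lies in $H_0^1(\mathcal{C}_\rho)$ and hence $\into|\nabla v_1|^2dx\ge\mu(\mathcal{C}_\rho)\into v_1^2/|x|^2dx\ge(N^2/4-\eps)\into v_1^2/|x|^2dx$; and $v_2$ is supported in $\{|x|\ge\rho/2\}$, so $\into v_2^2/|x|^2dx\le(4/\rho^2)\into v^2dx$. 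Adding these, using $v^2=v_1^2+v_2^2$, and absorbing the (bounded) cut-off term and the $v_2$-contribution into one constant $C(\Omega,\eps)$, I obtain \eqref{equ46}.

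Finally, for the optimality I would argue that the $L^2$-term is indispensable because $\mu(\Omega)<N^2/4$ in the hyperbolic case: near $0$ the domain dips below its tangent plane and therefore contains a solid cone wider than a half-space, on which the Hardy constant is already strictly below $N^2/4$ --- a clean proof being available through Proposition \ref{hardysub}. Hence for $\eps$ small enough $N^2/4-\eps>\mu(\Omega)$, so the pure inequality $\into|\nabla v|^2dx\ge(N^2/4-\eps)\into v^2/|x|^2dx$ fails. The step I expect to be the real obstacle is the one hidden in the second paragraph: making sure that $\mu(\mathcal{C}_\rho)\to N^2/4$ as $\mathcal{C}_\rho\to\rr_+^N$, i.e.\ that the cone Hardy constant does not drop discontinuously in the limit. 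This is exactly Theorem \ref{taa3}, and it is the only genuinely nonroutine ingredient; the cut-off, the IMS identity and the crude bound for $v_2$ away from the singularity are all standard.
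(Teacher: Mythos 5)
Your argument for the inequality \eqref{equ46} itself is sound and follows essentially the same route as the paper's (sketched) proof: localization near the singularity, the inclusion $\Omega\cap B_\rho\subset\mathcal{C}_\rho$ for a circular cone of aperture $\pi/2+\arcsin(|\gamma|\rho)$, the continuous dependence of the cone Hardy constant (Theorem \ref{taa3}, Corollary \ref{obaa2}) to recover the constant $N^2/4-\eps$ near the origin, and a cut-off/IMS gluing in which the far-field contribution is absorbed into the $L^2$-term; you have correctly filled in the details the paper leaves to the reader, and you correctly identify the continuity of $\gamma\mapsto\mu(\mathcal{C}_\gamma)$ at $\gamma=\pi/2$ as the only non-routine ingredient, which is exactly the point of the remark. (A small caveat: Theorem \ref{taa3} is stated for $N\geq 3$; for $N=2$ the required continuity follows instead from the explicit formula $\mu(\mathcal{C}_\gamma)=\pi^2/\gamma^2$.)

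The genuine flaw is in your optimality paragraph. A domain satisfying \textsc{C}4 does \emph{not} contain a solid cone wider than a half-space with vertex at $0$: since $\Omega\subset P_\gamma=\{x_N>\gamma|x'|^2\}$ and $\partial P_\gamma$ is tangent to the hyperplane $\{x_N=0\}$ to second order, any cone $\{x_N>-c|x'|\}$ with $c>0$ contains points with $-c|x'|<x_N<\gamma|x'|^2$ arbitrarily close to the origin, and these lie outside $P_\gamma$, hence outside $\Omega$. Likewise, the assertion that $\mu(\Omega)<N^2/4$ holds ``in the hyperbolic case'' is not established by anything in the paper, nor by Proposition \ref{hardysub}: as literally stated, \textsc{C}4 is only the inclusion $\Omega\subset P_\gamma$, which is also met by domains contained in $\rr_+^N$, for which $\mu(\Omega)=N^2/4$ and the pure inequality with constant $N^2/4-\eps$ does hold; so your concluding sentence ``hence the pure inequality fails'' cannot be asserted for an arbitrary $\Omega$ satisfying \textsc{C}4. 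The optimality claim must be read, as in the paper, as an existence statement over the class \textsc{C}4, and that is precisely what Proposition \ref{hardysub} supplies: one takes a cone $\mathcal{C}$ strictly larger than $\rr_+^N$, a test function with Rayleigh quotient below $N^2/4$ supported in a compact set $K\subset\subset\mathcal{C}$ away from the vertex, and then builds a particular $\Omega$ with $K\subset\Omega\subset\mathcal{C}$; such an $\Omega$ need not contain any cone. With that reading your citation of Proposition \ref{hardysub} is the right ingredient, but the geometric justification preceding it should be dropped.
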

{\bf Sketch of the proof of Theorem \ref{ta5}}.  The proof is based
on
  local approximations of $\Omega$ around the origin, by conical
  sectors. We consider sectors that approximate the hyperplane
   $\gamma=0$ from below. In this analysis we use Hardy inequalities in
   cones  and the continuous dependence of the Hardy constant, more precisely,  Corollary \ref{obaa2} in Section 3 below. This allows improving Hardy inequalities
    near the origin. Cut-off arguments allow to
    glue Hardy inequalities derived near the origin with terms in $L^2$-norm,
    provided
   that the potential $1/|x|^2$ is bounded far from the
   singularity $x=0$. Basically,   this proves \eqref{equ46}. For further details of a cut-off
   technique we refer to \cite{MR1760280}, pp. 111.

  The necessity of the term in
$L^2$-norm on the left hand side of \eqref{equ46} is a consequence
of Proposition \ref{hardysub} stated below.

\begin{prop}\label{hardysub}
There exist  smooth bounded open sets $\Omega\subset \rr_{+}^{N}$,
$N\geq 2$, satisfying either  $\textsc{C}\ref{eqq3}$ or
$\textsc{C}\ref{eqq4}$ and such that
\begin{equation}\label{cont}
\mu(\Omega)< \frac{N^2}{4}.
\end{equation}
\end{prop}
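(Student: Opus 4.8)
The plan is to construct explicit examples of smooth bounded domains $\Omega \subset \rr_+^N$ for which the Hardy constant drops strictly below $N^2/4$, by exploiting the fact established in the introduction that $\mu(\mathcal{C}_\gamma) = \pi^2/\gamma^2$ in dimension $2$, together with Theorem \ref{taa3} (continuous dependence of the Hardy constant in cones) which is proved in Section \ref{Sec2}. The idea is that a domain which, near the origin, looks like a cone with slot strictly larger than $\pi$ --- so that it is \emph{not} contained in any half-space, i.e.\ it satisfies C\ref{eqq4} (or C\ref{eqq3}) locally but with unfavorable geometry --- should inherit a Hardy constant related to $\pi^2/\gamma^2$ with $\gamma > \pi$, hence strictly less than $1 < N^2/4$ when $N \geq 2$. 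Wait --- more carefully: the relevant comparison is that a thin neighborhood of the vertex of such a wide cone gives a good test-function, so the obstacle is to turn the cone computation into a statement about genuinely smooth bounded sets.

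Concretely, first I would fix $N \geq 2$ and a conical sector model in $\rr^N$ with vertex at $0$ whose opening, measured appropriately, exceeds that of the half-space (for $N=2$ take $\mathcal{C}_\gamma$ with $\gamma \in (\pi, 2\pi)$, so $\mu(\mathcal{C}_\gamma) = \pi^2/\gamma^2 < 1 \le N^2/4$; for $N \ge 3$ take the solid of revolution generated by such a $2$-d sector, or more simply a cone $\mathcal{C} = \{x : x_N > \gamma_{\mathrm{loc}}|x'|^2\}$-type region that fails to be half-space-contained near $0$). Next I would use a cut-off/localization argument: take a minimizing-type sequence or a fixed near-optimal test function $u$ for the cone supported in a small ball $B_\rho(0)$, and smooth the cone near the vertex and truncate it far from the vertex to obtain a genuine smooth bounded open set $\Omega$ with $0 \in \partial\Omega$. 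By scaling invariance of the Hardy quotient and the fact that $u$ is supported near $0$, the ratio $\int_\Omega |\nabla u|^2 / \int_\Omega u^2/|x|^2$ is controlled by the cone's Hardy constant up to an error that tends to $0$ as the smoothing scale goes to $0$; here is exactly where the continuous dependence result (Corollary/Theorem in Section \ref{Sec2}, e.g.\ \ref{taa3}) enters, guaranteeing the cone constant itself varies continuously so a slightly-perturbed cone still has constant $< N^2/4 - \delta$ for some $\delta > 0$. Finally I would check that the smoothed domain genuinely satisfies C\ref{eqq3} or C\ref{eqq4}: the smoothing is done so that near $0$ the boundary is $C^\infty$ with a prescribed (possibly negative) curvature profile, placing it in case C\ref{eqq4}, while a variant that is flat-to-second-order at $0$ gives C\ref{eqq3}.

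The main obstacle I anticipate is the matching between the sharp cone computation (which lives on an unbounded, non-smooth cone) and the required smooth bounded domain: one must ensure that rounding the vertex does not raise the Hardy constant back up to $N^2/4$. This is precisely why Theorem \ref{ta5}/\ref{tu7} and the continuous-dependence statement are invoked --- the smoothing perturbs the domain only at scales much smaller than the support of a near-optimal test function, so the Rayleigh quotient is changed by an arbitrarily small amount, and strictness $\mu(\Omega) < N^2/4$ survives. A secondary technical point is verifying that the constructed $\Omega$ lies in $\rr_+^N$ as required (the wide cone itself is not in a half-space, but one only needs the \emph{local} geometry near $0$ to be of type C\ref{eqq3} or C\ref{eqq4} while the global set sits in $\rr_+^N$); this is arranged by taking $\Omega$ to be a small bounded piece near the origin that is contained in $\rr_+^N$ away from $0$ but whose boundary at $0$ has the hyperbolic/degenerate profile, so that no global paraboloid $P_\gamma$ with $\gamma \ge 0$ contains it, yet $\Omega \subset \rr_+^N$.
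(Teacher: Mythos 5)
Your plan has a genuine gap at its very first step in dimensions $N\ge 3$: you never actually prove that a cone opening wider than the half-space has Hardy constant strictly below $N^2/4$. The two-dimensional formula $\mu(\mathcal{C}_\gamma)=\pi^2/\gamma^2$ (which does settle $N=2$) has no analogue you can quote for $N\ge3$, and the results from Section \ref{Sec2} that you invoke cannot supply the missing inequality: Theorem \ref{taa3} gives \emph{lower} bounds on $\lambda_1(\gamma)$, and Corollary \ref{obaa2} is a continuity statement at $\gamma=\pi/2$; continuity alone can never produce the strict inequality $\mu(\mathcal{C})<N^2/4$ for a cone strictly containing $\rr_+^N$. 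What is needed, and what the paper uses, is the representation \eqref{constantcone}, $\mu(\mathcal{C})=(N-2)^2/4+\lambda_1(\mathcal{D})$, combined with the strict anti-monotonicity of the principal Dirichlet eigenvalue of the spherical Laplacian under strict inclusion of spherical domains and the value $\lambda_1(S_{+}^{N-1})=N-1$: taking $\mathcal{D}$ strictly larger than the half-sphere gives $\mu(\mathcal{C})<\mu(\rr_{+}^{N})=N^2/4$ for all $N\ge 2$ at once.

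The second gap is the smoothing step. Rounding the vertex of the cone and claiming the Rayleigh quotient changes by an arbitrarily small amount is not justified by anything you cite: the continuous dependence proved in the paper concerns the slot $\gamma$ of exact cones, not perturbations of the vertex, and invoking Theorems \ref{tu7} and \ref{ta5} here would be circular, since the optimality assertions in those theorems are themselves consequences of Proposition \ref{hardysub}. The paper sidesteps all of this by fixing the test function before the domain: since $\mu(\mathcal{C})<N^2/4$, there is a single $u\in C_{0}^{\infty}(\mathcal{C})$ whose Rayleigh quotient is below $N^2/4$; its support $K$ is compact and at positive distance from $\partial\mathcal{C}$, so one may choose \emph{any} smooth bounded open set $\Omega$ of type C\ref{eqq3} or C\ref{eqq4} with $K\subset\Omega\subset\mathcal{C}$, and then $\mu(\Omega)\le \int_{\Omega}|\nabla u|^2dx\big/\int_{\Omega}u^2/|x|^2dx<N^2/4$ with no perturbation estimate whatsoever. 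Once the test function rather than the domain is fixed first, your worry about the constant climbing back up under smoothing, as well as the delicate matching of scales, simply disappears.
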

\begin{proof}[Proof of Proposition \ref{hardysub}]
From the characterization of the first eigenvalue one can show the
strict anti-monotonicity
\begin{equation}\label{eigenvalues7}
\mathcal{D}_1\subset\subset \mathcal {D}_2\Rightarrow
\lambda(\mathcal{D}_1)> \lambda(\mathcal{D}_2)
\end{equation}
Next we take a cone $\mathcal{C}$ strictly larger than
$\rr_{+}^{N}$. From \eqref{eigenvalues7} and \eqref{constantcone} we
obtain
$$\mu(\mathcal{C})<\mu(\rr_{+}^{N})=\frac{N^2}{4}.$$
Therefore, there exists $u\in C_{0}^{\infty}(\mathcal{C})$ such that
$$\frac{\int_{\mathcal{C}}|\nabla u|^2dx}{\int_{\mathcal{C}}u^2/|x|^2dx } < \frac{N^2}{4}.$$
Denote $K:=\overline{\textrm{supp}u}$. Then $K\subset\subset
\mathcal{C}$ and dist$(K, \pa \mathcal{C})>0$. Hence, we can build
an open set $\Omega$ satisfying either  C\ref{eqq3} or C\ref{eqq4},
such that $K\subset \Omega\subset \mathcal{C}.$ Hence, $u\in
C_{0}^{\infty}(\Omega)$ and we get that
$$\mu(\Omega)\leq \frac{\int_{\mathcal{C}}|\nabla u|^2dx
}{\int_{\mathcal{C}}|u|^2/|x|^2dx}<\frac{N^2}{4}.$$ The proof is
completed.
\end{proof}

\subsection{Proofs of useful Lemmas}

\begin{lema}\label{lu1}
Let $N\geq 2$ and $\Omega\subset \rr^N$ satisfying one of the
conditions $\textsc{C}1-\textsc{C}4$.  If  $v\in H_{0}^{1}(\Omega)$
then
\begin{equation}\label{eqn8}
\lim_{x\rightarrow \pa P_\gamma}\frac{v^2(x)}{x_N-\gamma|x'|^2}=0.
\end{equation}
\end{lema}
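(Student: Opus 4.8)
\textbf{Proof proposal for Lemma \ref{lu1}.}

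The plan is to reduce the boundary limit \eqref{eqn8} to a standard trace-type decay estimate for $H^1_0$ functions near the smooth hypersurface $\partial P_\gamma = \{x_N = \gamma|x'|^2\}$. First I would observe that it suffices to prove the statement for $v \in C_c^\infty(\Omega)$, and then pass to the general case by density: indeed, fix $x_0 \in \partial P_\gamma$ and a small ball $B$ centered at $x_0$; if $v_n \to v$ in $H^1_0(\Omega)$ with $v_n$ smooth and compactly supported, then on $B \cap \Omega$ the quotient $v_n^2/(x_N - \gamma|x'|^2)$ is controlled, via the one-dimensional Hardy inequality \eqref{eq166} applied in the direction transverse to $\partial P_\gamma$ (or equivalently via the inequality $\int d(x)^{-2} w^2 \le 4\int |\nabla w|^2$ for $w \in H^1_0$ of a half-space-like region, where $d$ is the distance to $\partial P_\gamma$), by $C\|v_n\|_{H^1(B\cap\Omega)}^2$. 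Here one uses that near $x_0$ the function $x_N - \gamma|x'|^2$ is comparable to the distance $d(x,\partial P_\gamma)$, with constants depending only on $\gamma$ and the local geometry. This gives equicontinuity/uniform smallness of the tails and lets us reduce to smooth $v$.

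For smooth compactly supported $v$, the argument is local and essentially one-dimensional. Near a boundary point $x_0 \in \partial\Omega \cap \partial P_\gamma$, straighten the boundary: introduce coordinates $(y', s)$ where $s = x_N - \gamma|x'|^2 \ge 0$ measures (up to a bounded factor) the distance to $\partial P_\gamma$ and $y'$ parametrizes the surface. Since $v \in H^1_0(\Omega)$ and $\Omega \subset P_\gamma$, for a.e. $y'$ the slice $s \mapsto v(y',s)$ vanishes at $s = 0$, so
\begin{equation}\label{eqplan1}
v^2(y',s) = \Big(\int_0^s \partial_s v(y',\sigma)\, d\sigma\Big)^2 \le s \int_0^s |\partial_s v(y',\sigma)|^2\, d\sigma .
\end{equation}
Dividing by $s$ (which is comparable to $x_N - \gamma|x'|^2$) gives
\begin{equation}\label{eqplan2}
\frac{v^2(y',s)}{x_N - \gamma|x'|^2} \le C \int_0^s |\partial_s v(y',\sigma)|^2\, d\sigma,
\end{equation}
and the right-hand side tends to $0$ as $s \to 0$ by absolute continuity of the integral (for smooth $v$ it is simply $O(s)$). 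Taking the supremum over $y'$ in a compact piece of the boundary, this shows $v^2(x)/(x_N-\gamma|x'|^2) \to 0$ as $x \to \partial P_\gamma$ within a neighborhood of each such $x_0$; away from $\partial\Omega$ the function $v$ has compact support in $\Omega$, so the quotient vanishes identically near the part of $\partial P_\gamma$ not touching $\mathrm{supp}\,v$. A covering argument over $\partial P_\gamma$ (intersected with a neighborhood of $\overline{\Omega}$) finishes the smooth case, and combined with the density reduction of the first paragraph, completes the proof.

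The main obstacle is the interface between the two regimes: one must handle uniformly the points of $\partial P_\gamma$ that also lie on $\partial\Omega$ (where $v$ does not have compact support transverse to the surface and the estimate \eqref{eqplan2} is genuinely needed) together with the cases C3 and C4, in which $\Omega$ is only locally contained in $P_{\gamma_{\mathrm{loc}}}$ and globally in $P_\gamma$ with possibly $\gamma < 0$ — so that $x_N - \gamma|x'|^2$ may be comparable to distance-to-$\partial P_\gamma$ only with $\gamma$-dependent constants, and one must check that $x_N - \gamma|x'|^2 > 0$ throughout $\Omega$ (which holds by the defining condition $\Omega \subset P_\gamma$) so that no division-by-zero occurs except in the limit. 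Once it is observed that $x_N - \gamma|x'|^2$ is, on $\overline\Omega$, a nonnegative Lipschitz function comparable to $\mathrm{dist}(x,\partial P_\gamma)$ with uniform constants, the rest is the routine trace estimate above.
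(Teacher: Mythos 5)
Your second-paragraph slicing estimate — Newton--Leibniz along the direction transverse to $\partial P_\gamma$, Cauchy--Schwarz, and absolute continuity of the Lebesgue integral — is exactly the paper's proof, which extends $v$ by zero and bounds $v^2(x)/(x_N-\gamma|x'|^2)$ by $\int_{\gamma|x'|^2}^{x_N}\big|\partial v/\partial y_N(x',y_N)\big|^2\,dy_N\to 0$ as $x_N\to\gamma|x'|^2$. The density reduction in your first paragraph is superfluous (and, as sketched, the weakest link, since the Hardy-type bound only gives an $O(1)$ control of the quotient, not smallness uniform in $n$): the slicing argument applies directly to $v\in H^1_0(\Omega)$ because Sobolev functions are absolutely continuous on almost every vertical line and vanish on $\{x_N=\gamma|x'|^2\}$ after the zero extension, which is precisely how the paper proceeds.
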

\begin{proof}[Proof of Lemma \ref{lu1}]
Extending $v$ with 0 outside $\Omega$ we get $v(x',\gamma|x'|^2)=0$.
Then
$$0<\frac{v^2(x)}{x_N-\gamma|x'|^2}\leq
\int_{\gamma|x'|^2}^{x_N}\Big|\frac{\pa v}{\pa
y_N}(x',y_N)\Big|^2dyn
$$
which converges to 0 when $x_N\rightarrow \gamma|x'|^2$. This is due
to the properties of Lebesque integral and the fact that  $\frac{\d
v }{\d y_N}(x',\cdot)$ belongs to $ L^2(\gamma|x'|^2, x_N)$.
\end{proof}

\begin{proof}[Proof of Lemma \ref{l11}]
Firstly
\begin{equation}\label{eq33}\nabla v=(x_N-\gamma|x'|^2)|x|^{-C}\nabla u+u\nabla
[(x_N-\gamma|x'|^2)|x|^{-C}],
\end{equation}
\begin{align*}
\nabla [(x_N-\gamma|x'|^2)|x|^{-C}]&=\sum_{i=1}^{N-1}[-2\gamma
x_i|x|^{-C}-C(x_N-\gamma|x'|^2)|x|^{-C-2}x_i]e_i+\\
&+[|x|^{-C}-Cx_N(x_N-\gamma|x'|^2)|x|^{-C-2}]e_N,
\end{align*}
where $\{e_i\}_{i=1,N}$ is canonical basis of $\rr^N$. Then
\begin{align}\label{equ7}
|\nabla[(x_N-\gamma|x'|^2)|x|^{-C}]|^2&=(1+4\gamma^2|x'|^2)|x|^{-2C}+(C^2-2C)(x_N-\gamma|x'|^2)^2|x|^{-2C-2}+\nonumber\\
&+2\gamma C|x'|^2(x_N-\gamma|x'|^2)|x|^{-2C-2}
\end{align}
and
\begin{align}\label{equ8}
\textrm{div}\Big\{(x_N-\gamma|x'|^2)|x|^{-C}&\nabla
[(x_N-\gamma|x'|^2)|x|^{-C}]\Big\}\nonumber\\
&=(2C^2-CN-2C)(x_N-\gamma|x'|^2)^2|x|^{-2C-2}+(1+4\gamma^2|x'|^2)|x|^{-2C}
\nonumber\\
&-2[\gamma(N-1)|x|^2-2\gamma C|x'|^2](x_N-\gamma|x'|^2)|x|^{-2C-2}.
\end{align}
Using the formulas from above and integrating by parts we obtain
\begin{align}\label{equ9}
\int_{\Omega}|\nabla v|^2dx&=\int_{\Omega}|\nabla u|^2(x_N-\gamma
|x'|^2)^2|x|^{-2C}dx+\int_{\Omega}|\nabla
[(x_N-\gamma|x'|^2)|x|^{-C}]|^2u^2dx+\nonumber\\
&+2\int_{\Omega}(x_N-\gamma|x'|^2)|x|^{-C}\nabla[(x_N-\gamma|x'|^2)|x|^{-C}]u\nabla
udx\nonumber\\
&=\int_{\Omega}|\nabla u|^2(x_N-\gamma
|x'|^2)^2|x|^{-2C}dx+\int_{\Omega}|\nabla
[(x_N-\gamma|x'|^2)|x|^{-C}]|^2u^2dx\nonumber\\
&+\int_{\pa
\Omega}u^2(x_N-\gamma|x'|^2)|x|^{-C}\nabla[(x_N-\gamma|x'|^2)|x|^{-C}]\cdot
\nu
d\sigma-\nonumber\\
&-\int_{\Omega}\textrm{div}\Big\{(x_N-\gamma|x'|^2)|x|^{-C}
\nabla[(x_N-\gamma|x'|^2)|x|^{-C}]\Big\}u^2dx.
\end{align}
Estimating the expression of the gradient in (\ref{equ7}) we get
\begin{align}\label{equ10}
|\nabla[(x_N-\gamma|x'|^2)|x|^{-C}]|^2
\leq C(\gamma,\Omega)|x|^{-2C},
\end{align}
where 
$C(\gamma, \Omega)$ is a suitable positive  constant. Therefore,
from (\ref{equ10}) we have
\begin{equation}\label{equ11}\Big|u^2(x_N-\gamma|x'|^2)|x|^{-C}\nabla[(x_N-\gamma|x'|^2)|x|^{-C}]\Big|\leq
C\frac{v^2(x)}{x_N-\gamma|x'|^2},
\end{equation}
for some constant $C$.  According to  Remark \ref{lu1} and
(\ref{equ11}) the boundary term of (\ref{equ9}) vanishes and we
obtain the new identity
\begin{align}\label{equ12}
\int_{\Omega}|\nabla v|^2dx&=\int_{\Omega}|\nabla u|^2(x_N-\gamma
|x'|^2)^2|x|^{-2C}+\int_{\Omega}|\nabla
[(x_N-\gamma|x'|^2)|x|^{-C}]|^2u^2dx-\nonumber\\
&-\int_{\Omega}\textrm{div}\Big\{(x_N-\gamma|x'|^2)|x|^{-C}
\nabla[(x_N-\gamma|x'|^2)|x|^{-C}]\Big\}u^2dx.
\end{align}
By (\ref{equ7}), (\ref{equ8}) and (\ref{equ12}) we have the identity
(\ref{equ6}). With this the proof of Lemma \ref{l11} ends.
\end{proof}

\begin{proof}[Proof of Lemma \ref{l13}]
With the change of variables $w(r)=v(r)\log^{1/2}(L/r)$ we have
$$|w'(r)|^2=\frac{1}{4r^2}\log^{-1}(L/r)v^2+v_{r}^{2}\log(L/r)-\frac{1}{r}vv_r.$$
Therefore, due to the zero boundary conditions, we obtain
\begin{eqnarray*}\label{eqn1}
\intR |w'(r)|^2rdr&=& \frac{1}{4}\intR
\frac{w^2}{r^2\log^2(L/r)}rdr+\intR v_r^2\log(L/r)rdr -\intR v
v_rdr\nonumber\\
&=&\frac{1}{4}\intR \frac{w^2}{r^2\log^2(L/r)}rdr+\intR
v_r^2\log(L/r)rdr\nonumber\\
&\geq &\frac{1}{4}\intR \frac{w^2}{r^2\log^2(L/r)}rdr.
\end{eqnarray*}
and Lemma \ref{l13} holds true.
\end{proof}

\section{Inequalities in cones}\label{Sec2}
 Firstly, let us consider a Lipschitz connected cone
$\mathcal{C} \subset \rr^{N}\setminus \{0\}$ with the vertex at
zero. Let $D\subset S^{N-1}$ be the Lipschitz domain such that
$$\mathcal{C}=\{(r, \omega) \  |\   r\in (0, \infty ),\  \omega \in \mathcal{D}\}$$
 Let $\mu(\mathcal{C})$ be the best constant in the Hardy inequality.
 Then (cf. \cite{tintarevpinchover})
\begin{equation}\label{constantcone}
\mu(\mathcal{C})=\frac{(N-2)^2}{4}+\lambda_1(\mathcal{D})
\end{equation}
where $\lambda_1(\mathcal{D})$ is the Dirichlet principal eigenvalue
of the spherical Laplacian $-\Delta_{S^{N-1}}$ on $\mathcal{D}$. In
2-d it is well-known that (e.g. \cite{caldiroli2})
$$\lambda_1(\gamma):=\lambda_1(0,\gamma)=\pi^2/\gamma^2,$$ where
$\gamma$ is the slot of the conical sector $\mathcal{C}_\gamma=\{(r,
\omega) \  |\   r\in (0, \infty ),\ \omega \in (0,\gamma)\}$ (see
Figure \ref{fig8} below).
 \begin{figure}[h]
\begin{center}
\setlength{\unitlength}{.5cm}
\begin{picture}(16,10)
\linethickness{0.3mm}
  \put(2,3){\vector(1,0){5}}
  \textcolor[rgb]{0.00,0.00,0.50}{\put(7,3){\vector(1,0){6}}}
  \put(7,1.3){\vector(0,1){7.2}}
   \put(11.5,4.2){\makebox(0,0){\emph{$\mathbf{\mathcal{C}_\gamma}$}}}
   \put(12.5,2.6){\makebox(0,0){$\mathbf{x_1}$}}
   \put(6.2,8){\makebox(0,0){$\mathbf{x_2}$}}
   \put(8.7,3.35){\bf\makebox(0,0){$\mathbf{\gamma}$}}
    \put(6.35,2.4){\bf{0}}
    \textcolor[rgb]{1.00,0.00,0.00}{\qbezier(7.8,3.35)(8, 3.3)
          (7.9, 3)}
   \textcolor[rgb]{0.00,0.00,0.25}{\put(6.8,3){\bf\vector(2,1){5.5}}}
    \end{picture}
\caption{\label{fig8} The 2-d conical sector with the aperture
$\gamma$.}
\end{center}
\end{figure}
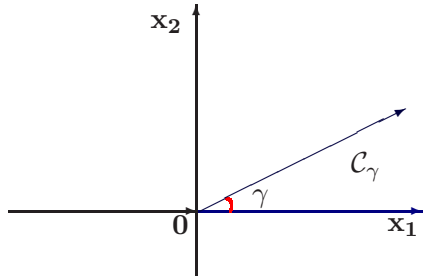
  In higher dimensions $N\geq 3$, by our knowledge,
$\lambda_1(\mathcal{D})$ is well-known only in the case where
$\mathcal{D}$ is the semi-sphere $S_{+}^{N-1}$ mapped in the upper
half space $\rr_{+}^{N}$. More precise,
$\lambda_1(S_{+}^{N-1})=N-1$. The half space $\rr_{+}^N$ corresponds
to the conical sector of slot $\gamma=\pi/2$ (see Figure \ref{figg8}
below).

The aim of this section is mainly devoted to find lower bounds for
$\lambda_1(\mathcal{D})$ in higher dimensions $N\geq 3$. In that
sense,  the definition of a cone in polar coordinates will be used.
\subsection{The $N-d$ case, $N\geq3$}\label{subs1}
For $0<\gamma<\pi$  we define the $N$-dimensional cone, with  slot
$\gamma$, denoted by $\mathcal{C}_\gamma$ (Figure \ref{fig8}),
consisting in all $x=(x_1,x_2,...,x_N)\in\rr^N$ such that, in
spherical coordinates  (cf. \cite{SteinS}, pp. 293),
\begin{equation}\label{eqa14}
\mathcal{C}_\gamma:\left\{\begin{array}{ll}
  x_1=r\sin \theta_1\sin \theta_2\ldots\sin \theta_{N-2}\cos_{N-1} \\
  x_2=r\sin \theta_1\sin \theta_2\ldots\sin \theta_{N-2}\sin_{N-1} \\
  \vdots\\
  x_{N-1}=r\sin \theta_1\sin \theta_2 \\
  x_N=r\cos \theta_1 \\
\end{array}\right.
\end{equation}
with  $r>0$ and
\begin{equation}\label{equ49}
\left\{\begin{array}{ll}
  0<\theta_1\leq \gamma, & \\
  0\leq \theta_i\leq \pi, & \textrm{ for } 2\leq i\leq N-2,  \\
  0\leq \theta_{N-1}\leq 2\pi. & \\
\end{array}\right.
\end{equation}

\begin{figure}[h]
\begin{center}
\setlength{\unitlength}{.5cm}
\begin{picture}(16,10)
\linethickness{0.3mm}
  \put(2,3){\vector(1,0){5}}
  \put(7,3){\vector(1,0){6}}
  \put(7,1.3){\vector(0,1){8}}
   \put(7,3){\vector(-1,-1){4}}
   \put(6.5,5.5){\makebox(0,0){\emph{$\mathcal{C}_\gamma$}}}
   \put(12.6,2.5){\makebox(0,0){$\mathbf{x_1}$}}
   \put(3,0){\makebox(0,0){$\mathbf{x_2}$}}
   \put(6.2,9){\makebox(0,0){$\mathbf{x_3}$}}
   \put(7.5,4.8){\bf\makebox(0,0){$\mathbf{\gamma}$}}
   \put(8.9,7.6){\bf\makebox(0,0){$\mathbf{x}$}}
   \put(7.5,2){\bf\makebox(0,0){$\mathbf{\theta_2}$}}
\put(8.2,5.9){\bf\makebox(0,0){$\mathbf{r}$}}
    \put(6.35,2.4){\bf{0}}
    \put(9.3,7.25){\textcolor[rgb]{1.00,0.00,0.00}{\line(0,-1){5.1}}}
    \put(7.1,3){\textcolor[rgb]{1.00,0.00,0.00}{\line(2,4){2.4}}}
    \put(7,3){\textcolor[rgb]{1.00,0.00,0.00}{\line(3,-1){2.45}}}
           \textcolor[rgb]{0.00,0.00,0.50}{\qbezier(7,4)(7.2, 4.1)
          (7.45, 3.9)
          \qbezier(6.7,2.7)(7.1,2.5)
          (7.5,2.8)}
    \put(9.3,7.25){\textcolor[rgb]{1.00,0.00,0.00}{\line(-6,1){2.2}}}
   \put(7, 7.6){\circle*{0.3}}
   \put(9.3, 7.25){\textcolor[rgb]{1.00,0.00,0.00}{\circle*{0.3}}}
   \qbezier(10,7.6)(7, 6)
          (4, 7.6)
\qbezier(10,7.6)(7, 9)
          (4, 7.6)
   \put(7,3){\bf\vector(2,3){3.1}}
\put(7,3){\bf\vector(-2,3){3.1}}
    \end{picture}
\caption{\label{figg8} The cone with the slot $\gamma$}
\end{center}
\end{figure}
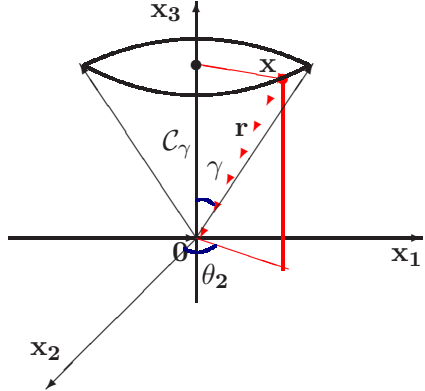

For simplicity we denote by $\lambda_1(\gamma):=\lambda_1(D_\gamma)$
the first Dirichlet
 eigenvalue of the spherical Laplacian on
$\mathcal{D}_\gamma:=\mathcal{C}_\gamma\cap S^{N-1}$. Then we have
\begin{equation}\label{ineq1}
\mu(\mathcal{C}_\gamma)=\frac{(N-2)^2}{4}+\lambda_1(\gamma)
\end{equation}

\subsection{Main results}
\begin{Th}\label{taa3}
Assume that $N\geq 3$.
\begin{enumerate}[(a).]
\item\label{dd} If $0<\gamma\leq \frac{\pi}{2}$ then
 $$\lambda_1(\gamma)\geq
\frac{(N-1)\pi^2}{4\gamma^2}.$$
\item\label{cc} For any $ \eps>0$, $\eps << 1$, there exists
$\delta=\delta(\eps)>0$ such that for all $ \quad \frac{\pi}{2}\leq
\gamma \leq \frac{\pi}{2}+\delta$
$$\lambda_1(\gamma)>N-1-\eps.$$
\end{enumerate}
\end{Th}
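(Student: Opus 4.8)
The plan is to reduce the $N$-dimensional eigenvalue problem to a one-dimensional Sturm–Liouville problem on $(0,\gamma)$ with the weight $\sin^{N-2}t$, and then to analyze that problem. The first Dirichlet eigenfunction on $\mathcal{D}_\gamma = \mathcal{C}_\gamma\cap S^{N-1}$ depends only on the polar angle $\theta_1$ because of the axial symmetry of $\mathcal{C}_\gamma$: writing the spherical Laplacian in the coordinates \eqref{eqa14}, the eigenvalue $\lambda_1(\gamma)$ equals
\begin{equation*}
\lambda_1(\gamma)=\inf_{\substack{\varphi\in C_c^\infty(0,\gamma)\\ \varphi\not\equiv 0}}\frac{\int_0^\gamma |\varphi'(t)|^2\sin^{N-2}t\, dt}{\int_0^\gamma |\varphi(t)|^2\sin^{N-2}t\, dt},
\end{equation*}
i.e. the bottom of the spectrum of $-\varphi'' - (N-2)\cot t\,\varphi'$ with Dirichlet condition at $t=\gamma$ (and the natural condition at $t=0$). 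So both parts (a) and (b) are statements about this weighted one-dimensional Rayleigh quotient.

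For part (a), with $0<\gamma\le \pi/2$ we have $0\le t\le \pi/2$ on the support of the test functions, and there the elementary inequality $\sin t \ge \tfrac{2}{\pi}t$ (concavity of $\sin$ on $[0,\pi/2]$) together with $\sin t\le t$ gives the weight comparison $\bigl(\tfrac{2}{\pi}\bigr)^{N-2} t^{N-2}\le \sin^{N-2}t\le t^{N-2}$; this alone, however, only yields a constant worse than $(N-1)\pi^2/(4\gamma^2)$. The sharper route is to compare directly with the one-dimensional Hardy inequality \eqref{eq166}: substituting $\varphi(t)=\psi(t) t^{-(N-2)/2}$ (the standard ground-state substitution) one gets
\begin{equation*}
\int_0^\gamma |\varphi'|^2 t^{N-2}\,dt = \int_0^\gamma |\psi'|^2\,dt - \frac{(N-2)^2}{4}\int_0^\gamma \frac{\psi^2}{t^2}\,dt \ge 0,
\end{equation*}
but this gives the weight $t^{N-2}$, not $\sin^{N-2}t$. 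The cleanest argument I would actually push through is: use $\sin t\ge \tfrac{2}{\pi}t$ to bound the numerator weight from below is the wrong direction, so instead use that on $(0,\gamma)\subset(0,\pi/2)$ the function $t\mapsto (\sin t/t)^{N-2}$ is decreasing, so the measure $\sin^{N-2}t\,dt$ has a density that, relative to $t^{N-2}\,dt$, is a decreasing function of $t$; by a standard monotonicity-of-weights argument (or by directly writing $\sin^{N-2}t = t^{N-2}\cdot g(t)$ with $g$ decreasing and integrating by parts against $g$), the Rayleigh quotient with weight $\sin^{N-2}t$ dominates the one with weight $t^{N-2}$ on the same interval, scaled so that the relevant exactly-solvable comparison is the Bessel-type problem on $(0,\gamma)$ whose first eigenvalue is the square of the first zero of $J_{(N-2)/2}$ divided by $\gamma^2$; finally one invokes the clean lower bound $j_{\nu,1}^2 \ge (\nu+1)\tfrac{\pi^2}{4}$ combined with $\nu = (N-2)/2$ to read off $\lambda_1(\gamma)\ge (N-1)\pi^2/(4\gamma^2)$. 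I expect the bookkeeping in this weight-comparison step to be the main obstacle — getting the monotonicity direction right and making sure the comparison interval and the Bessel eigenvalue estimate line up to produce exactly the constant $(N-1)/4$ rather than something weaker.

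For part (b), the point is continuity of $\gamma\mapsto\lambda_1(\gamma)$ from the right at $\gamma=\pi/2$, where $\lambda_1(\pi/2)=\lambda_1(S^{N-1}_+)=N-1$ (the value recalled in the text). The natural argument: $\lambda_1(\gamma)$ is nonincreasing in $\gamma$ (enlarging the domain decreases the first eigenvalue, by the variational characterization and extension-by-zero of test functions), so it suffices to show $\liminf_{\gamma\downarrow\pi/2}\lambda_1(\gamma)\ge N-1$. Take the eigenfunction $\varphi_\gamma$ on $(0,\gamma)$ normalized in $L^2(\sin^{N-2}t\,dt)$; rescale the interval $(0,\gamma)$ to $(0,\pi/2)$ via $t\mapsto \tfrac{\pi}{2\gamma}t$ to get a test function on the fixed interval, track the (uniformly bounded, converging to $1$) Jacobian and weight-distortion factors, and use $\varphi_\gamma$ as a competitor in the Rayleigh quotient for $(0,\pi/2)$; this yields $\lambda_1(\pi/2)\le \lambda_1(\gamma)(1+o(1))$ as $\gamma\downarrow\pi/2$, i.e. $\lambda_1(\gamma)\ge (N-1)(1-o(1))$, which gives the claim with $\delta$ chosen so that $o(1)<\eps/(N-1)$. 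Here the main care is the uniform control of the weight-distortion constants $\sin(\tfrac{\pi}{2\gamma}t)/\sin t$ on $(0,\pi/2)$ as $\gamma\to\pi/2$, which is routine since this ratio tends to $1$ uniformly. (This part is also exactly the continuity-in-$\gamma$ statement invoked later for Theorem \ref{ta5}, so it is the one that must be done carefully rather than sketched.)
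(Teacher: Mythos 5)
Your part (b) is fine, but your part (a) rests on a step that is actually false. The ``monotonicity-of-weights'' comparison you say you would push through --- that the Rayleigh quotient with weight $\sin^{N-2}t$ dominates the one with weight $t^{N-2}$ on the same interval $(0,\gamma)$ --- goes in the wrong direction. The $\sin^{N-2}$-weighted problem is the geodesic cap of radius $\gamma$ in $S^{N-1}$ and the $t^{N-2}$-weighted problem is the flat ball of the same radius, and the cap has the \emph{smaller} first eigenvalue: already at $\gamma=\pi/2$, $N=3$, the left-hand infimum is $\lambda_1(\pi/2)=N-1=2$ while the right-hand one is $j_{0,1}^2/(\pi/2)^2\approx 2.34$. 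So the comparison you need fails exactly at the endpoint where the claimed bound $(N-1)\pi^2/4\gamma^2$ is sharp. Moreover, even granting that comparison, the Bessel bookkeeping does not produce the constant: for the $t^{N-2}$-weighted problem the relevant order is $(N-3)/2$, not $(N-2)/2$ (this is the content of Theorem \ref{SL1}, where the eigenvalue is $B_1^2/\gamma^2$ with $B_1$ the first zero of $J_{(N-3)/2}$), and the bound $j_{\nu,1}^2\geq(\nu+1)\pi^2/4$ with your $\nu=(N-2)/2$ gives only $N\pi^2/8<(N-1)\pi^2/4$ for $N\geq3$. The paper's proof of (a) avoids all of this: starting from the one-dimensional characterization \eqref{firsteigenvalue} (the same reduction you make), it substitutes $u=v\cos(\frac{\pi}{2\gamma}\theta_1)$ and integrates by parts to get the identity \eqref{eqaa1}; the middle term already yields $\frac{\pi^2}{4\gamma^2}\int u^2\sin^{N-2}\theta_1\,d\theta_1$, and the remaining term is bounded below by $(N-2)\frac{\pi^2}{4\gamma^2}\int u^2\sin^{N-2}\theta_1\,d\theta_1$ via the pointwise inequality $\tan(\frac{\pi}{2\gamma}\theta_1)/(\frac{\pi}{2\gamma}\theta_1)\geq\tan\theta_1/\theta_1$, which is where $\gamma\leq\pi/2$ enters (monotonicity of $\tan t/t$ on $(0,\pi/2)$). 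Summing gives exactly $(N-1)\pi^2/4\gamma^2$; you would need this (or some replacement for the failed comparison) to repair part (a).

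For part (b), your rescaling argument is correct and is a genuinely different route from the paper's: mapping $(0,\gamma)$ onto $(0,\pi/2)$, the weight distortion $\sin(\frac{\pi}{2\gamma}t)/\sin t$ is squeezed between $\frac{\pi}{2\gamma}$ (concavity of $\sin$ on $[0,\pi]$) and $1/\sin\gamma$, both tending to $1$, so any near-minimizer for $\lambda_1(\gamma)$ gives $N-1=\lambda_1(\pi/2)\leq(1+o(1))\lambda_1(\gamma)$ as $\gamma\downarrow\pi/2$; you do not even need existence of the eigenfunction or the monotonicity step. The paper instead proves the quantitative statement of Proposition \ref{taa2} directly, reusing the cosine substitution of part (a) on $(0,\pi/2+\delta/2)$, splitting the troublesome term at $\pi/2-\varepsilon$ and controlling the two error pieces through Lemmas \ref{laa3} and \ref{laa5} together with the vanishing of $u$ at the endpoint; your approach is shorter, while the paper's gives an explicit inequality with constants tracked in $\delta$ and $\varepsilon$. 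As submitted, however, the proposal does not prove part (a).
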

\begin{cor}[The continuous dependance of the Hardy constant]\label{obaa2}
\begin{equation}\label{eqaa16}
\lim_{\gamma\rightarrow
\pi/2}\mu(\mathcal{C_\gamma})=\mu(\mathcal{C}_{\pi/2})=\frac{N^2}{4}.
\end{equation}
\end{cor}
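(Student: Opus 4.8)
The plan is to work entirely at the level of the first Dirichlet eigenvalue of the spherical Laplacian and to use the identity \eqref{ineq1}, $\mu(\mathcal{C}_\gamma)=(N-2)^2/4+\lambda_1(\gamma)$. Since $\mathcal{D}_{\pi/2}=\mathcal{C}_{\pi/2}\cap S^{N-1}=S^{N-1}_+$ and $\lambda_1(S^{N-1}_+)=N-1$, we have $\mu(\mathcal{C}_{\pi/2})=(N-2)^2/4+(N-1)=N^2/4$ (equivalently, $\mathcal{C}_{\pi/2}=\rr^N_+$ and $\mu(\rr^N_+)=N^2/4$). Hence everything reduces to proving $\lim_{\gamma\to\pi/2}\lambda_1(\gamma)=N-1$; plugging this into \eqref{ineq1} then gives simultaneously $\lim_{\gamma\to\pi/2}\mu(\mathcal{C}_\gamma)=N^2/4=\mu(\mathcal{C}_{\pi/2})$.

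First I would record the monotonicity of $\gamma\mapsto\lambda_1(\gamma)$. For $0<\gamma_1<\gamma_2<\pi$ the closed spherical cap $\overline{\mathcal{D}_{\gamma_1}}$ is a compact subset of the open cap $\mathcal{D}_{\gamma_2}$, so by the strict anti-monotonicity of the first Dirichlet eigenvalue under compact inclusions (see \eqref{eigenvalues7}) one gets $\lambda_1(\gamma_1)>\lambda_1(\gamma_2)$. In particular $\lambda_1(\gamma)\ge\lambda_1(\pi/2)=N-1$ for every $\gamma\le\pi/2$, and $\lambda_1(\gamma)\le N-1$ for every $\gamma\ge\pi/2$, and the one-sided limits of $\lambda_1$ at $\pi/2$ both exist by monotonicity. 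For the right-hand limit I would invoke the second assertion of Theorem \ref{taa3}: given small $\eps>0$ there is $\delta(\eps)>0$ with $\lambda_1(\gamma)>N-1-\eps$ on $[\pi/2,\pi/2+\delta]$; combined with $\lambda_1(\gamma)\le N-1$ there this yields $|\lambda_1(\gamma)-(N-1)|<\eps$ for $\pi/2<\gamma\le\pi/2+\delta$, hence $\lim_{\gamma\to(\pi/2)^+}\lambda_1(\gamma)=N-1$.

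For the left-hand limit, monotonicity already gives $\lim_{\gamma\to(\pi/2)^-}\lambda_1(\gamma)\ge N-1$, so I only need the reverse inequality, which I would get by a test-function argument. For $\gamma<\pi/2$ consider $\varphi_\gamma:=(\cos\theta_1-\cos\gamma)_+$, which belongs to $H_0^1(\mathcal{D}_\gamma)$ and coincides, up to the additive constant $\cos\gamma$, with the restriction to $\mathcal{D}_\gamma$ of the first eigenfunction $\cos\theta_1$ of $-\Delta_{S^{N-1}}$ on $S^{N-1}_+$ (whose eigenvalue is $c_1=N-1$). Then $\int_{\mathcal{D}_\gamma}|\nabla_{S^{N-1}}\varphi_\gamma|^2=\int_{\mathcal{D}_\gamma}|\nabla_{S^{N-1}}\cos\theta_1|^2\nearrow\int_{S^{N-1}_+}|\nabla_{S^{N-1}}\cos\theta_1|^2$ and $\int_{\mathcal{D}_\gamma}\varphi_\gamma^2\to\int_{S^{N-1}_+}\cos^2\theta_1$ as $\gamma\to(\pi/2)^-$ (since $\cos\gamma\to0$ and $\mathcal{D}_\gamma\nearrow S^{N-1}_+$), so the Rayleigh quotient of $\varphi_\gamma$ converges to $\lambda_1(S^{N-1}_+)=N-1$, whence $\limsup_{\gamma\to(\pi/2)^-}\lambda_1(\gamma)\le N-1$. (Alternatively, one may just use that Dirichlet eigenvalues are continuous along the increasing exhaustion $\mathcal{D}_{\gamma_n}\nearrow S^{N-1}_+$: any $\varphi\in C_c^\infty(S^{N-1}_+)$ with Rayleigh quotient below $N-1+\eps$ is supported in $\mathcal{D}_{\gamma_n}$ for $n$ large.) Combining the two one-sided limits gives $\lim_{\gamma\to\pi/2}\lambda_1(\gamma)=N-1$ and, via \eqref{ineq1}, the corollary. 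The only nontrivial ingredient is the second assertion of Theorem \ref{taa3}, which is already proved; the rest is domain monotonicity and an elementary competitor, so I do not foresee a genuine obstacle.
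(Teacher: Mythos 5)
Your proof is correct and follows essentially the route the paper intends: the corollary is read off from \eqref{ineq1} together with $\lambda_1(S^{N-1}_+)=N-1$ (so $\mu(\mathcal{C}_{\pi/2})=N^2/4$), the eigenvalue monotonicity under domain inclusion as in \eqref{eigenvalues7}, and Theorem \ref{taa3}(\ref{cc}) for $\gamma\geq\pi/2$. Your competitor $(\cos\theta_1-\cos\gamma)_+$ (or the exhaustion argument) for the limit from $\gamma<\pi/2$ simply supplies the upper bound that the paper leaves implicit, since Theorem \ref{taa3}(\ref{dd}) only gives the matching lower bound on that side; this is a completion of detail, not a different method.
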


\begin{Th}\label{SL1}
Assume  $\gamma\in (0,\pi)$. Then it holds
\begin{equation}\label{firsteigenvalue2}
\Big(\frac{\sin
\gamma}{\gamma}\Big)^{N-2}\Big(\frac{B_1}{\gamma}\Big)^2\leq
\lambda_1(\gamma)\leq \Big(\frac{ \gamma}{\sin
\gamma}\Big)^{N-2}\Big(\frac{B_1}{\gamma}\Big)^2,
\end{equation}
where $B_1$ is the first positive zero of the Bessel function
$J_{\frac{N-3}{2}}$, of fractional order $(N-3)/2$.
\end{Th}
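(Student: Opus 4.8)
The plan is to turn the computation of $\lambda_1(\gamma)$ into a one--dimensional weighted eigenvalue problem on $(0,\gamma)$, and then to compare the weight $\sin^{N-2}t$ with the ``flat'' weight $t^{N-2}$, for which the first eigenvalue is explicitly the square of a Bessel zero. Write a point of $S^{N-1}$ as $\omega=(\theta_1,\omega')$, with $\theta_1\in(0,\pi)$ the polar angle and $\omega'\in S^{N-2}$, so that the round metric splits as $d\theta_1^2+\sin^2\theta_1\,d\omega'^2$ and $\mathcal{D}_\gamma=\{\theta_1<\gamma\}$ is a geodesic ball. First I would expand an arbitrary $u\in H_0^1(\mathcal{D}_\gamma)$ in an $L^2(S^{N-2})$--orthonormal basis of spherical harmonics, $u(\theta_1,\omega')=\sum_{k\ge0}g_k(\theta_1)Y_k(\omega')$ with $-\Delta_{S^{N-2}}Y_k=c_kY_k$ and $c_k=k(k+N-3)\ge0$. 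Using the splitting of the Dirichlet form one gets $\int_{\mathcal{D}_\gamma}|\nabla_{S^{N-1}}u|^2=\sum_k\intog\big(|g_k'|^2+c_k\sin^{-2}t\,g_k^2\big)\sin^{N-2}t\,dt$ and $\int_{\mathcal{D}_\gamma}u^2=\sum_k\intog g_k^2\sin^{N-2}t\,dt$; dropping the nonnegative terms $c_k\sin^{-2}t\,g_k^2$ and using that each $g_k\in\hog$, this proves (together with the trivial reverse inequality obtained by testing with functions of $\theta_1$ alone) that
\[
\lambda_1(\gamma)=\Lambda(\gamma):=\inf_{f\in\hog\setminus\{0\}}\frac{\intog|f'(t)|^2\sin^{N-2}t\,dt}{\intog|f(t)|^2\sin^{N-2}t\,dt}.
\]

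Next I would identify the corresponding model quotient, $\Lambda_*(\gamma):=\inf\{\intog|f'|^2t^{N-2}dt\,/\,\intog|f|^2t^{N-2}dt:\ f\in\hogt\setminus\{0\}\}$, with $(B_1/\gamma)^2$. Its Euler--Lagrange equation is $(t^{N-2}f')'+\Lambda_*\,t^{N-2}f=0$ on $(0,\gamma)$ with $f(\gamma)=0$, i.e. exactly the radial Dirichlet eigenvalue problem for $-\Delta$ on the ball of radius $\gamma$ in $\rr^{N-1}$; the solution that is bounded (has finite energy) at $t=0$ is $f(t)=t^{-(N-3)/2}J_{(N-3)/2}(\sqrt{\Lambda_*}\,t)$, so the boundary condition forces $\sqrt{\Lambda_*}\,\gamma$ to be a positive zero of $J_{(N-3)/2}$, and minimality selects the first one, giving $\Lambda_*(\gamma)=(B_1/\gamma)^2$.

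The comparison is then elementary: $t\mapsto\sin t/t$ is strictly decreasing on $(0,\pi)$, since $(t\cos t-\sin t)'=-t\sin t<0$ and $t\cos t-\sin t$ vanishes at $0$; hence for $0<t<\gamma<\pi$ one has $(\sin\gamma/\gamma)\,t\le\sin t\le t$, and therefore $(\sin\gamma/\gamma)^{N-2}t^{N-2}\le\sin^{N-2}t\le t^{N-2}$ (in particular $\hog=\hogt$ with equivalent norms, so both infima range over the same space). Applying these bounds to the numerator and to the denominator of the Rayleigh quotient separately yields, for every admissible $f$,
\[
\Big(\frac{\sin\gamma}{\gamma}\Big)^{N-2}\frac{\intog|f'|^2t^{N-2}dt}{\intog|f|^2t^{N-2}dt}\ \le\ \frac{\intog|f'|^2\sin^{N-2}t\,dt}{\intog|f|^2\sin^{N-2}t\,dt}\ \le\ \Big(\frac{\gamma}{\sin\gamma}\Big)^{N-2}\frac{\intog|f'|^2t^{N-2}dt}{\intog|f|^2t^{N-2}dt}.
\]
Taking the infimum over $f$ in the left inequality gives $\lambda_1(\gamma)=\Lambda(\gamma)\ge(\sin\gamma/\gamma)^{N-2}\Lambda_*(\gamma)$, and inserting the minimiser of $\Lambda_*(\gamma)$ into the right inequality gives $\lambda_1(\gamma)\le(\gamma/\sin\gamma)^{N-2}\Lambda_*(\gamma)$; since $\Lambda_*(\gamma)=(B_1/\gamma)^2$, these are precisely the two bounds in \eqref{firsteigenvalue2}.

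I expect the only genuinely delicate point to be the reduction of the first paragraph — that the infimum over $\mathcal{D}_\gamma$ is attained among functions depending on the polar angle alone, i.e. that the principal eigenfunction of a spherical cap is rotationally symmetric. The spherical--harmonics expansion on $S^{N-2}$ together with $c_k\ge0$ settles this, but one has to check that each mode $g_k$ really belongs to $\hog$ (in particular that it has vanishing trace at $t=\gamma$, which is legitimate since the weight is nondegenerate there) and to justify the term--by--term identities via Parseval, in the same spirit as the spherical decomposition already used in the proof of Theorem \ref{tu3}. The remaining ingredients — the Bessel identification of $\Lambda_*(\gamma)$ and the weight comparison — are routine.
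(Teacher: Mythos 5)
Your proposal is correct and follows essentially the same route as the paper: reduce $\lambda_1(\gamma)$ to the one-dimensional Rayleigh quotient with weight $\sin^{N-2}t$, identify the flat-weight eigenvalue $\lambda_\star^1(\gamma)=(B_1/\gamma)^2$ through the Bessel equation (the paper does this via the substitution $v=ut^{N-2}$ and rules out the singular branch exactly as you do by finite energy at $t=0$), and sandwich the two quotients using $\frac{\sin\gamma}{\gamma}t\le\sin t\le t$ on $(0,\gamma)$. The only differences are matters of presentation: you justify the reduction to the polar-angle problem by an explicit $S^{N-2}$-harmonics expansion, whereas the paper takes this characterization from its preliminaries and instead supplies the attainment of the flat-weight infimum via a compact-embedding proposition before solving the resulting Sturm--Liouville problem.
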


\begin{obs}[Asymptotic behavior]\label{asymptotic}
From Theorem \ref{SL1} above we get the  asymptotic formula

$$\lim_{\gamma\rightarrow 0}\frac{\lambda_{1}(\gamma)\gamma^2}{B_{1}^2}=1.$$
\end{obs}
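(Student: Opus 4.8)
The plan is to obtain the asymptotic formula as an immediate squeeze consequence of the two-sided bound in Theorem \ref{SL1}. First I would divide the chain of inequalities in \eqref{firsteigenvalue2} by the strictly positive quantity $B_1^2/\gamma^2$ (positive because $B_1>0$ is fixed and $\gamma\in(0,\pi)$), which yields, for every $\gamma\in(0,\pi)$,
$$\Big(\frac{\sin\gamma}{\gamma}\Big)^{N-2}\leq \frac{\lambda_1(\gamma)\gamma^2}{B_1^2}\leq \Big(\frac{\gamma}{\sin\gamma}\Big)^{N-2}.$$

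Next I would invoke the elementary limit $\lim_{\gamma\to 0}\frac{\sin\gamma}{\gamma}=1$ together with the continuity of $t\mapsto t^{N-2}$ at $t=1$; this shows that both the left-hand side and the right-hand side of the displayed double inequality converge to $1^{N-2}=1$ as $\gamma\to 0^{+}$. The squeeze theorem then forces $\frac{\lambda_1(\gamma)\gamma^2}{B_1^2}\to 1$, which is precisely the stated asymptotic formula; equivalently, $\lambda_1(\gamma)\sim B_1^2/\gamma^2$ as $\gamma\to 0$.

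There is essentially no obstacle in this argument: all the analytic work sits in Theorem \ref{SL1}, and Remark \ref{asymptotic} is merely its corollary. The only point worth noting explicitly is that $B_1$, being the first positive zero of the Bessel function $J_{(N-3)/2}$, is a constant independent of $\gamma$, so it may be taken out of the limit without difficulty.
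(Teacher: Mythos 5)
Your proposal is correct and coincides with the paper's intended argument: Remark \ref{asymptotic} is stated as an immediate consequence of the two-sided bound in Theorem \ref{SL1}, and your division by $B_1^2/\gamma^2$ followed by the squeeze using $\sin\gamma/\gamma\to 1$ is exactly that deduction. Nothing is missing.
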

\subsection{ Preliminaries} \vspace{0.5cm}

Let us  consider $u\in C_c^\infty(\con)$. Then, in polar coordinates
we have
\begin{equation}\label{ineq2}
u(r, \theta_1, \theta_2, \ldots, \theta_{N-3}, \theta_{N-2}) \in
C^\infty((0,\infty)\times (0, \gamma)\times
(0,\pi)\times\ldots\times (0, \pi)\times (0, 2\pi)),
\end{equation}
 vanishing in
the neighborhoods of $r=\infty$ and $\theta_1=\gamma$.

 The representation of the gradient in polar coordinates is
given by
\begin{align}\label{eqaa60}
|\nabla
u|^2&=|u_r|^2+\frac{u_{\theta_1}^2}{r^2}+\frac{u_{\theta_2}^2}{r^2\sin^2\theta_1}+\frac{u_{\theta_3}^2}{r^2\sin^2\theta_1\sin^2
\theta_2}+\nonumber\\ &+\ldots
+\frac{u_{\theta_{N-1}}^{2}}{r^2\sin^2\theta_1\sin^2\theta_2\ldots
\sin^2{\theta_{N-2}}}.
\end{align}
In other words,
$$|\nabla u|^2=|u_r|^2+\frac{u_{\theta_1}^2}{r^2}+\textrm{ positive terms}.$$
The determinant of  the Jacobian of the transformation has the form
$$J(r,\theta_1,\theta_2\ldots, \theta_{N-2})=r^{N-1}\sin^{N-2}
\theta_1\sin^{N-3} \theta_2\ldots \sin\theta_{N-2}.$$ To simplify
the notations, we define the integral in the variables
$\theta_2,\ldots, \theta_{N-2}$ as
$$\int:=\int_{0}^{\pi}\ldots \int_{0}^{\pi}\int_{0}^{2\pi}
\sin^{N-3}\theta_2\ldots \sin\theta_{N-2} \dtheta_{2} \ldots
\dtheta_{N-2}\dtheta_{N-1}.$$

For a fixed $\sigma\in S^{N-1}\cap \con$ the radial function
$r\mapsto u(\cdot,\sigma)$ satisfies the well-known Hardy inequality
$$
\int_{0}^{\infty} u_r^2 r^{N-1}dr\geq \Big(\frac{N-2}{2}\Big)^2
\int_{0}^{\infty} \frac{u^2}{r^2} r^{N-1}dr,$$ and therefore
\begin{align}\label{eqa55}
\int_{0}^{\infty}\int_{0}^{\gamma} \int u_r^2 r^{N-1}\sin^{N-2}
\theta_1 \dtheta_1dr \geq  \Big(\frac{N-2}{2}\Big)^2
\int_{0}^{\infty}\int_{0}^{\gamma} \int \frac{u^2}{r^2}
r^{N-1}\sin^{N-2} \theta_1 \dtheta_1dr.
\end{align}
From above, due to the lack of the boundary conditions in the
variables $\theta_2, \ldots, \theta_{N-2}$ we get that
$\lambda_1(\gamma)$ is the optimal constant in the weighted
inequality
\begin{align}\label{theta1}
\int_{0}^{\infty}\int_{0}^{\gamma} \int \frac{u_{\theta_1}^2}{r^2}
r^{N-1}\sin^{N-2} \theta_1 \dtheta_1dr \geq \lambda_1(\gamma)
\int_{0}^{\infty}\int_{0}^{\gamma} \int \frac{u^2}{r^2}
r^{N-1}\sin^{N-2} \theta_1 \dtheta_1dr.
\end{align}
More precisely, $\lambda_1(\gamma)$ may be characterized by
\begin{equation}\label{firsteigenvalue}
\lambda_1(\gamma)=\inf_{\{u\in H,  u\neq
 0\}}\frac{\int_{0}^{\gamma}u_{\theta_1}^2\sin^{N-2}\theta_1\dtheta_1}{
\int_{0}^{\gamma}u^2\sin^{N-2}\theta_1\dtheta_1},
\end{equation}
where 
$H$ is the completion of the space $$\{u\in C^\infty[0,\gamma) \ |\
u \textrm{ vanishes in a neighborgood of $\gamma$} \}$$ in the norm
\begin{equation}\label{H1}
||u||_{H}^{2}=\int_{0}^{\gamma}
u_{\theta_1}^{2}\sin^{N-2}\theta_1\dtheta_1.
\end{equation}
 Indeed,
if $a_n=a_n(\theta)$ is an approximating sequence in
\eqref{firsteigenvalue} then the sequence
$$u_n:=u_1(r)a_n(\theta_1)u_2(\theta_2)\ldots u_{N-2}(\theta_{N-2}),$$
where $u_1$ is smooth and vanishes in the neighborhood of
$r=\infty$, minimizes also $\lambda_1(\gamma)$.
\subsection{Proofs of Theorems}
\begin{proof}[Proof of Theorem \ref{taa3}(\ref{dd})]
Without losing the generality,  we are going to consider $u$ as in
\eqref{ineq2}. Next we propose the change of variables
 $$v(r,\theta_1,
\theta_2,\ldots, \theta_{N-2}):=u(r, \theta_1, \theta_2,\ldots,
\theta_{N-2})/\cos(\frac{\pi}{2\gamma}\theta_1).$$
 For simplicity,
we write $u(r)$ or $u(\theta_1)$ when referring  to the radial
variable respectively at the angular part $\theta_1$.  Thus,
integrating by parts we get the following identity:
\begin{align}\label{eqaa1}
\ig u_{\theta_1}^{2}(\theta_1)\sin^{N-2} \theta_1 \dtheta_1&=\ig
v_{\theta_1}^2(\theta_1)
\cos^2(\frac{\pi}{2\gamma}\theta_1)\sin^{N-2} \theta_1
\dtheta_1+\frac{\pi^2}{4\gamma^2}\ig u^2(\theta_1)\sin^{N-2}
\theta_1 \dtheta_1\nonumber\\ &+(N-2)\frac{\pi}{2\gamma}\ig
v^2(\theta_1)\sin (\frac{\pi}{2\gamma}\theta_1)\cos
(\frac{\pi}{2\gamma}\theta_1) \cos \theta_1 \sin^{N-3}\theta_1
\dtheta_1.
\end{align}
Using the identity (\ref{eqaa1}) and the characterization of
$\lambda_1(\gamma)$ stated in  \eqref{firsteigenvalue}, 
it is enough to show that
$$\cos (\frac{\pi}{2\gamma}\theta_1)\sin (\frac{\pi}{2\gamma}\theta_1) \cos
\theta_1\geq
\frac{\pi}{2\gamma}\cos^2(\frac{\pi}{2\gamma}\theta_1)\sin
 \theta_1,\quad \forall \quad \theta_1\in [0,\gamma].$$
Obviously, this is true for $\theta_1=\{0,\gamma\}$. Dividing by
$\frac{\pi}{2\gamma}\theta_1\cos^2(\frac{\pi}{2\gamma}\theta_1)\cos
\theta_1$ it remains to prove that
$$\frac{\tan (\frac{\pi}{2\gamma}\theta_1) }{\frac{\pi}{2\gamma}\theta_1}\geq \frac{\tan \theta_1}{\theta_1}, \quad \forall \quad \theta_1\in (0,\gamma).$$
Because $\pi/2\gamma>1$, the last inequality is true due to the
increasing monotonicity of the function $\theta_1\mapsto \tan
\theta_1/\theta_1$ in the interval $(0,\pi/2)$. This ends the proof.
\end{proof}
\begin{proof}[ Proof of Theorem \ref{taa3}(\ref{cc})]
In the sequel we will use the notations $$\gamma=\pi/2+\delta \quad
\textrm{
 and } \overline{\gamma}:=\pi/2+\delta/2,$$ where $\delta>0$. It suffices to prove that
\begin{prop}\label{taa2}
For any  $\eps>0$, $\eps<<1$, there exists $\delta=\delta(\eps)>0$,
such that
\begin{equation}\label{eqaa6}
\int_{0}^{\overline{\gamma}(\eps)}
u_{\theta_1}^2(\theta_1)\sin^{N-2} \theta_1\dtheta_1 \geq
(N-1-\eps)\int_{0}^{\overline{\gamma}(\eps)}
 u^2(\theta_1)\sin^{N-2} \theta_1\dtheta_1, 
 \end{equation}
 for all $u\in H$. 
\end{prop}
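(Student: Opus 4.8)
The plan is to transfer the inequality \eqref{eqaa6} from the interval $(0,\overline{\gamma})$ to the fixed reference interval $(0,\pi/2)$, where the sharp constant is already known to equal $N-1$. Indeed $\lambda_1(\pi/2)=\lambda_1(S^{N-1}_{+})=N-1$, so by the characterization \eqref{firsteigenvalue} (with $\pi/2$ in place of the endpoint) every $\psi\in C^\infty[0,\pi/2)$ vanishing near $\pi/2$ satisfies $\int_0^{\pi/2}\psi'(s)^2\sin^{N-2}s\,ds\ge (N-1)\int_0^{\pi/2}\psi(s)^2\sin^{N-2}s\,ds$. By density it suffices to prove \eqref{eqaa6} for $u\in C^\infty[0,\overline{\gamma})$ vanishing near $\overline{\gamma}$. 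First I would set $\tau:=\overline{\gamma}/(\pi/2)=1+\delta/\pi>1$, substitute $\theta_1=\tau s$, and put $\tilde u(s):=u(\tau s)$ for $s\in(0,\pi/2)$; then $\tilde u\in C^\infty[0,\pi/2)$ vanishes near $\pi/2$, hence the displayed inequality applies to it, and a change of variables gives
\[
\int_0^{\overline{\gamma}} u_{\theta_1}^2\sin^{N-2}\theta_1\,d\theta_1=\frac{1}{\tau}\int_0^{\pi/2}\tilde u'(s)^2\sin^{N-2}(\tau s)\,ds,\qquad
\int_0^{\overline{\gamma}} u^2\sin^{N-2}\theta_1\,d\theta_1=\tau\int_0^{\pi/2}\tilde u(s)^2\sin^{N-2}(\tau s)\,ds .
\]

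The next step is a comparison of the distorted weight $\sin^{N-2}(\tau s)$ with $\sin^{N-2}s$ on $(0,\pi/2)$. Since $\tau s<\overline{\gamma}<\pi$, concavity of $\sin$ on $[0,\pi]$ applied with $s=\tfrac{1}{\tau}(\tau s)+\big(1-\tfrac{1}{\tau}\big)\cdot 0$ yields $\sin(\tau s)\le\tau\sin s$; and, writing $\sin(\tau s)=\sin s\,\cos((\tau-1)s)+\cos s\,\sin((\tau-1)s)$ and using $(\tau-1)s\in(0,\delta/2)$ together with $\cos s\ge 0$, one gets $\sin(\tau s)\ge\cos(\delta/2)\,\sin s$. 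Substituting the lower bound into the first identity above, applying the sharp $(0,\pi/2)$-inequality to $\tilde u$, and substituting the upper bound into the second identity, I obtain
\[
\int_0^{\overline{\gamma}} u_{\theta_1}^2\sin^{N-2}\theta_1\,d\theta_1\ \ge\ (N-1)\,\frac{\cos^{N-2}(\delta/2)}{\tau^{N}}\,\int_0^{\overline{\gamma}} u^2\sin^{N-2}\theta_1\,d\theta_1 .
\]
Because $\tau=1+\delta/\pi\to 1$ and $\cos(\delta/2)\to 1$ as $\delta\to 0^{+}$, the constant $(N-1)\cos^{N-2}(\delta/2)/\tau^{N}$ converges to $N-1$; hence, given $\eps>0$ small, choosing $\delta=\delta(\eps)>0$ small enough makes it at least $N-1-\eps$, which is precisely \eqref{eqaa6}.

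The step I expect to be the genuine obstacle is conceptual: recognizing that the substitution used in Theorem \ref{taa3}(\ref{dd}), namely $v=u/\cos(\tfrac{\pi}{2\overline{\gamma}}\theta_1)$, is the wrong tool here. For $\overline{\gamma}>\pi/2$ one has $\tfrac{\pi}{2\overline{\gamma}}<1$, so $\cos(\tfrac{\pi}{2\overline{\gamma}}\theta_1)$ is no longer (proportional to) an eigenfunction of the weighted operator; the pointwise inequality one is led to degenerates near $\theta_1=\pi/2$ (where $\tan\theta_1\to\infty$) and in fact reverses on $(\pi/2,\overline{\gamma})$, so this route salvages only a constant of order $\tfrac{\pi^2}{4\overline{\gamma}^2}\approx 1<N-1$. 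The rescaling argument sidesteps this by never comparing functions pointwise: it uses only the \emph{global} sharp inequality on $(0,\pi/2)$ and a uniform, order-$\delta$ control of the weight. The lone external input is the identity $\lambda_1(\pi/2)=N-1$; should a self-contained derivation be preferred, it follows from the substitution $v=u/\cos\theta_1$ exactly as in Theorem \ref{taa3}(\ref{dd}) with $\gamma=\pi/2$, where that substitution is exact and the second and third terms of the analogue of \eqref{eqaa1} combine to give precisely the constant $N-1$.
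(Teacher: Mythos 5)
Your proof is correct, but it follows a genuinely different route from the paper. The paper does \emph{not} abandon the substitution of Theorem \ref{taa3}(\ref{dd}): it applies $v=u/\cos(\frac{\pi}{2\gamma}\theta_1)$ with $\gamma=\pi/2+\delta$ (so the cosine stays positive on $[0,\overline{\gamma}]$), keeps the cross term $B$ in the analogue of \eqref{eqaa1}, splits it into the regions $(0,\pi/2-\eps)$ and $(\pi/2-\eps,\overline{\gamma})$, and controls the two pieces by the pointwise comparison of Lemma \ref{laa3}, the uniform bound of Lemma \ref{laa5}, and a mean-value/H\"older absorption of the error into the gradient term; the resulting constant $\bigl(\frac{\pi^2}{4\gamma^2}+(N-2)\frac{\pi}{2\gamma}(1-\eps)\bigr)/(1-C(\delta+\eps))$ tends to $N-1$. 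So your aside that this substitution ``salvages only a constant of order $\pi^2/4\overline{\gamma}^2$'' mischaracterizes the paper's argument (that is what one gets by discarding $B$, not by estimating it), though it does not affect your own proof. Your alternative -- rescale $(0,\overline{\gamma})$ onto $(0,\pi/2)$ by $\theta_1=\tau s$, $\tau=1+\delta/\pi$, invoke the sharp constant $\lambda_1(\pi/2)=N-1$ (which the paper quotes, and which your self-contained derivation via $v=u/\cos\theta_1$ indeed reproduces, since at $\gamma=\pi/2$ the second and third terms of \eqref{eqaa1} sum exactly to $(N-1)\int u^2\sin^{N-2}$), and control the weight distortion by the elementary bounds $\cos(\delta/2)\sin s\le\sin(\tau s)\le\tau\sin s$ -- is shorter, avoids the delicate splitting near $\theta_1=\pi/2$, and gives the clean explicit constant $(N-1)\cos^{N-2}(\delta/2)\tau^{-N}$; what it gives up is the structural identity that the paper reuses from part (a). One caveat common to both proofs: the inequality \eqref{eqaa6} can only hold for functions vanishing at $\overline{\gamma}$ (otherwise a function equal to $1$ on $[0,\overline{\gamma}]$ kills the left-hand side), so ``$u\in H$'' must be read with $H$ taken relative to the endpoint $\overline{\gamma}$; your density reduction to $u\in C^\infty[0,\overline{\gamma})$ vanishing near $\overline{\gamma}$ makes this reading explicit, exactly as the paper's integration by parts in \eqref{eqaaa1} implicitly requires (its boundary term at $\overline{\gamma}$ vanishes only then). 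With that understanding, your argument is complete and valid.
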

Next we prove Proposition \ref{taa2}.
 Denote
$$v(\theta_1):=u(\theta_1)/\cos(\frac{\pi}{2\gamma}\theta_1),$$
and similar to  \eqref{eqaa1} we obtain
\begin{align}\label{eqaaa1}
\igo u_{\theta_1}^{2}(\theta_1)\sin^{N-2} &\theta_1 \dtheta_1=\igo
v_{\theta_1}^2(\theta_1)
\cos^2(\frac{\pi}{2\gamma}\theta_1)\sin^{N-2} \theta_1
\dtheta_1+\frac{\pi^2}{4\gamma^2}\igo u^2(\theta_1)\sin^{N-2}
\theta_1 \dtheta_1\nonumber\\
 &+(N-2)\frac{\pi}{2\gamma}\igo
v^2(\theta_1)\sin (\frac{\pi}{2\gamma}\theta_1)\cos
(\frac{\pi}{2\gamma}\theta_1) \cos \theta_1 \sin^{N-3}\theta_1
\dtheta_1.
\end{align}
\noindent Then
\begin{align}\label{mainhardy}
\igo u_{\theta_1}^2(\theta_1)\sin^{N-2} \theta_1 \dtheta_1 &\geq
\frac{\pi^2}{4\gamma^2}\igo u^2(\theta_1) \sin^{N-2} \theta_1
\dtheta_1\nonumber\\
&+(N-2)\frac{\pi}{2\gamma}\igo v^2(\theta_1)\sin
(\frac{\pi}{2\gamma}\theta_1)\cos (\frac{\pi}{2\gamma}\theta_1) \cos
\theta_1 \sin^{N-3}\theta_1 \dtheta_1\nonumber\\&:=A+B.
\end{align}
Let $\eps>0$, $\eps<<1$ to be  fixed. In order to compute $B$, we
split it in two parts as
$$B=(N-2)\frac{\pi}{2\gamma}\int_{0}^{\pi/2-\eps}\ldots +
(N-2)\frac{\pi}{2\gamma}\int_{\pi/2-\eps}^{\overline{\gamma}}\ldots:=B_1+B_2.$$
where \noindent Next we concentrate on $B_1$.

Firstly, there exists $\delta=\delta(\eps)>0$, such that (cf. Lemma
\ref{laa3})
\begin{equation}\label{eqaa7}
\sin (\frac{\pi}{2\gamma(\eps)}\theta_1) \cos \theta_1\geq (1-\eps)
\cos (\frac{\pi}{2\gamma(\eps)}\theta_1) \sin \theta_1, \quad
\forall
\quad 0\leq \theta_1\leq \frac{\pi}{2}-\eps. 
\end{equation}
Using this and undoing the variables we reach to
\begin{align}\label{B1}
B_1&\geq (N-2)\frac{\pi}{2\gamma(\eps)}(1-\eps)
\int_{0}^{\pi/2-\eps}u^2(\theta_1)\sin^{N-2}\theta_1\dtheta_1\nonumber\\
&=(N-2)\frac{\pi}{2\gamma(\eps)}(1-\eps)\int_{0}^{\overline{\gamma}(\eps)}
u^2(\theta_1)\sin^{N-2}\theta_1\dtheta_1-B_{11},
\end{align}
where $$B_{11}=(N-2)\frac{\pi}{2\gamma(\eps)}(1-\eps)
\int_{\pi/2-\eps}^{\overline{\gamma}(\eps)}u^2(\theta_1)\sin^{N-2}
\theta_1\dtheta_1,$$ and thus
\begin{align}\label{B11}
|B_{11}|\leq C_1
\int_{\pi/2-\eps}^{\overline{\gamma}(\eps)}u^2(\theta_1)\dtheta_1
\end{align}
for some positive constant $C_1$ independent of $\eps$.
 Undoing the variables we have
\begin{align*}\label{eqau53}
B_2&=(N-2) \frac{\pi}{2\gamma(\eps)}
\int_{\pi/2-\eps}^{\overline{\gamma}(\eps)}u^2(\theta_1)\Big(\frac{\cos
\theta_1}{\cos (\frac{\pi}{2\gamma(\eps)}\theta_1)}\Big)\sin
(\frac{\pi}{2\gamma(\eps)}\theta_1) \sin^{N-3}\theta_1 \dtheta_1.
\end{align*}
Again, there exists $\delta=\delta(\eps)>0$ and a constant $C$
independent of $\eps$, such that (cf. Lemma \ref{laa5})
\begin{equation*}\label{eqaa25}
\Big|\frac{\cos \theta_1}{\cos
(\frac{\pi}{2\gamma(\eps)}\theta_1)}\Big|<C, \quad\forall\quad 0\leq
\theta_1\leq \overline{\gamma}(\eps),
\end{equation*}
and reconsidering the constant $C$ we obtain
\begin{align}\label{B2}
|B_2|\leq
C\int_{\pi/2-\eps}^{\overline{\gamma}(\eps)}u^2(\theta_1)\dtheta_1
\end{align}

From \eqref{B11} and \eqref{B2} we deduce
\begin{equation}\label{error}
|B_{11}|+|B_2|\leq
C\int_{\pi/2-\eps}^{\overline{\gamma}(\eps)}u^2(\theta_1)\dtheta_1
\end{equation}
On the other hand, from  Leibnitz-Newton formula there exists
$\tau=\tau(\eps) \in (\pi/2-\eps,\overline{\gamma}(\eps))$ such that

\begin{equation}\label{eqe4}
\int_{\pi/2-\eps}^{\overline{\gamma}(\eps)}u^2(\theta_1)\dtheta_1
=(\overline{\gamma}(\eps)+\eps-
\pi/2)u^2(\tau(\eps))=(\delta(\eps)/2+\eps)u^2(\tau(\eps)).
\end{equation}
 Applying Holder inequality we find
\begin{align}\label{eqe1}
u^2(\tau(\eps))&=\Big|\int_{\tau(\eps)}^{\overline{\gamma}(\eps)}
u_{\theta_1}(\theta_1)\dtheta_1\Big|^2\leq
C\int_{\tau(\eps)}^{\overline{\gamma}(\eps)}u_{\theta_1}^2(\theta_1)\dtheta_1
\leq C_1\int_{0}^{\overline{\gamma}(\eps)}
u_{\theta_1}^2(\theta_1)\sin^{N-2}\theta_1\dtheta_1,
\end{align}
where $C$, $C_1$ are some positive constants independent of $\eps$.
From \eqref{error}, (\ref{eqe1}) and (\ref{eqe4}) we deduce
\begin{equation}\label{eqe2}
|B_{11}|+|B_2|\leq C(\delta(\eps)+\eps)
\int_{0}^{\overline{\gamma}(\eps)}
u_{\theta_1}^2(\theta_1)\sin^{N-2}\theta_1\dtheta_1.
\end{equation}
Hence, according to \eqref{mainhardy} we get
\begin{align}\label{eqe3}
\big(1-C(\delta(\eps) +\eps)\big)&\int_{0}^{\overline{\gamma}(\eps)}
u_{\theta_1}^2(\theta_1)\sin^{N-2} \theta_1 \dtheta_1
\geq\nonumber\\
& \geq
\Big(\frac{\pi^2}{4\gamma^2(\eps)}+(N-2)\frac{\pi}{2\gamma(\eps)}(1-\eps)\Big)
\int_{0}^{\overline{\gamma}(\eps)} u^2(\theta_1)\sin^{N-2} \theta_1
\dtheta_1.
\end{align}

In other words
\begin{equation}\label{eqe3}
\ig u_{\theta_1}^2(\theta_1)\sin^{N-2} \theta_1 \dtheta_1\geq
\frac{\frac{\pi^2}{4\gamma^2(\eps)}+(N-2)\frac{\pi}{2\gamma(\eps)}(1-
\eps)}{1-C(\delta(\eps) +\eps)} \ig u^2(\theta_1)\sin^{N-2} \theta_1
\dtheta_1.
\end{equation}

Due to the fact that
$$\lim_{\delta\searrow 0}\frac{\frac{\pi^2}{4\gamma^2}+(N-2)\frac{\pi}{2\gamma}(1-\eps)}{1-C(\delta
+\eps)}=\frac{N-1-(N-2)\eps}{1-C\eps},$$ we may reconsider
$\delta=\delta(\eps)>0$ from above such that
$$\frac{\frac{\pi^2}{4\gamma^2(\eps)}+(N-2)\frac{\pi}{2\gamma(\eps)}(1-\eps)}{1-C(\delta(\eps)
+\eps)}> \frac{N-1-(N-2)\eps}{1-C\eps}-\frac{\eps}{2}. 
 $$

Now we can choose $\eps>0$ small enough such that
$$\frac{N-1-(N-2)\eps}{1-C\eps}\geq N-1-\frac{\eps}{2},$$
and for each $\delta=\delta(\eps)$ from above, we have
$$\frac{\frac{\pi^2}{4\gamma^2(\eps)}+(N-2)\frac{\pi}{2\gamma(\eps)}(1-\eps)}{1-C(\delta(\eps)
+\eps)}=\frac{N-1}{1-C\eps}>N-1-\eps.
$$ We end up the proof of the second part of Theorem \ref{taa3}.
\end{proof}

\begin{proof}[Proof of Theorem \ref{SL1}]
with $\gamma\in (0, \pi)$. Next, the aim is to show upper and lower
bounds for the value  $\lambda_1(\gamma)$ in
\eqref{firsteigenvalue}. Due to inequalities
 \be\label{equivalence1}
\frac{\sin \gamma}{\gamma }t \leq \sin t \leq t , \quad \forall
\quad t\in (0, \gamma), \gamma\in (0,\pi), \ee it suffices to
determine the value of
 \begin{equation}\label{firsteigenvalue2}
\lambda_{\star}^1(\gamma):=\inf_{\{u\in H, u\neq
 0\}}\frac{\int_{0}^{\gamma}u_{t}^2t^{N-2}dt}{
\int_{0}^{\gamma}u^2 t^{N-2}dt},
\end{equation}
which is well defined in $H$ since
$$\int_{0}^{\gamma}u_t^2 t^{N-2}dt
< \infty \textrm{ iff } \int_{0}^{\gamma}u_t\sin^{N-2} t dt< \infty,
$$
$H$ being defined by the norm in \eqref{H1}.
 By Proposition \ref{prop1}, it holds
that $\lambda_{\star}^{1}(\gamma)>0$. Due to the compact embedding
(see Proposition \ref{prop2})
$$H \hookrightarrow\loogt,$$
  $\lambda_{\star}^{1}(\gamma)$ is attained by a non-trivial function
  $\phi_1$. Then one can prove that $\lambda_1^{\star}(\gamma)$
  satisfies the variational problem: there exists $\phi^1\in H$ such
  that
  \begin{equation}\label{variational}
  \int_{0}^{\gamma}\phi_{t}^1v_t
  t^{N-2}dt=\lambda_1^{\star}(\gamma)\int_{0}^{\gamma}\phi^1v t^{N-2}dt,
  \quad \forall \quad v\in H.
  \end{equation}
Next we note that any $u\in H$ exhibits a hidden  weak Neumann
boundary condition at the origin $t=0$:
\begin{equation}
\lim_{\eps \rightarrow 0}\frac{1}{\eps}\int_{0}^{\eps}
u_tt^{N-2}dt=0.
\end{equation}
Indeed, we have
\begin{align}\label{BC1}
\Big|\frac{1}{\eps}\int_{0}^{\eps} u_tt^{N-2}dt\Big|&\leq
\frac{1}{\eps}\Big(\int_{0}^{\eps}(u_t)^2t^{N-2}dt\Big)^{1/2}
\Big(\int_{0}^{\eps}t^{N-2}dt\Big)^{1/2}=\eps^{(N-3)/2}\Big(\int_{0}^{\eps}
u_tt^{N-2}dt\Big)^{1/2}
\end{align}
which converges to 0 when $\eps$ tends to 0. This allows to make
integrations by parts and rewrite \eqref{variational} as
\begin{equation}\label{BC2}
\int_{0}^{\gamma}-(\phi_t^1t^{N-2})_t v
dt=\lambda_{1}^{\star}(\gamma) \int_{0}^{\gamma}\phi^1 vt^{N-2}dt,
\quad \forall\quad v\in H.
\end{equation}

 Therefore,
$\lambda_{\star}^{1}(\gamma)$ is the first  eigenvalue of the
degenerate Sturm-Liouville problem

\begin{equation}\label{SL2}
\left \{\begin{array}{ll}
  -(u_t t^{N-2})_t =\lambda  u t^{N-2},  &  t\in (0, \gamma), \\
 \lim_{t\rightarrow 0}u_tt^{N-2}=0,\quad  u(\gamma)=0, &  \\
\end{array}\right.
\end{equation}
 with the corresponding
eigenvector $\phi_1$.

In the sequel we  determine explicitly the value of
$\lambda_{\star}^{1}(\gamma)$.

With the change of variables $v=u t^{N-2}$, the problem \eqref{SL2}
reduces to the following Bessel equation with boundary constraint
\begin{equation}\label{SL3}
\left \{\begin{array}{ll}
   v_{tt}+(2-N)\frac{v_t}{t}+\Big(\lambda +\frac{N-2}{t^2}\Big)v=0,  &  t\in (0, \gamma), \\
  v(\gamma)=0, &  \\
\end{array}\right.
\end{equation}
\subsubsection{Bessel functions}
If $n$ is positive integer then, the first Bessel function $J_n$ of
order $n$ has the expression
\begin{align}
J_n(x)=\frac{x^n}{2^n n!}\Big(1&-\frac{x^2}{2\cdot(2n+2)}
+\frac{x^4}{2\cdot 4\cdot (2n+2)\cdot (2n+4)}-\ldots\Big)
\end{align}
and $J_n$ behaves like $x^n$ when $x>0$ is small. If $n$ is a
negative integer, by definition yields
$$J_{-n}(x)=(-1)^n J_n(x).$$
If $n$ is not an integer then
\begin{align*}
J_n(x)=\frac{x^n}{2^n \Gamma(n+1)}\Big(1&-\frac{x^2}{2\cdot(2n+2)}
+\frac{x^4}{2\cdot 4\cdot (2n+2)\cdot (2n+4)}-\ldots\Big),
\end{align*}
where $\Gamma$ denotes the Gamma-function. When $n$ is an integer it
is necessary to recall the so-called Weber's function, i.e.
$$Y_n(x)=J_n(x)\int \frac{dx}{xJ_n^2(x)},$$
   which behaves  like $1/x^n$ when
$x>0$ is small.
 Next, we consider the Bessel equation
\begin{equation}\label{SL4}
y_{tt}-(2\alpha-1)\frac{y_t}{t}+\Big(\beta^2 \tau^2 t^{2\tau
-2}+\frac{\alpha^2 -n^2\tau^2}{t^2}\Big)y=0,
\end{equation}
Due to  \cite{bessel}, pp. 117,  the general solution of \eqref{SL4}
is given by
$$y=t^{\alpha}\{ A J_{n}(\beta t^{\tau})+B Y_{n}(\beta t^{\tau})\},$$
$$y=t^{\alpha}\{ A J_{n}(\beta t^{\tau})+B J_{-n}(\beta t^{\tau})\},$$
where $A, B$ are constants, 
according as $n$ is non-negative  integer or not.

Once $\lambda\neq 0$ is an eigenvalue for \eqref{SL2} then $\lambda$
is also an eigenvalue  in \eqref{SL3}. The general solution of
\eqref{SL3} is a particular case of \eqref{SL4} for
$\alpha=(N-1)/2$, $\tau=1$, $\beta=\sqrt{\lambda}$, $n=(N-3)/2$,
i.e.,
$$v(t)=t^{\frac{N-1}{2}}\{AJ_{\frac{N-3}{2}}(\sqrt{\lambda}t)+
BY_{\frac{N-3}{2}}(\sqrt{\lambda}t)\},$$ or
$$v(t)=t^{\frac{N-1}{2}}\{AJ_{\frac{N-3}{2}}(\sqrt{\lambda}t)+
BJ_{-\frac{N-3}{2}}(\sqrt{\lambda}t)\}.$$

We show that it must be $B=0$, in which  case it simplifies the
expression of $v$ i.e.
\begin{equation}\label{SL5}
v(t)=At^{\frac{N-1}{2}}J_{\frac{N-3}{2}}(\sqrt{\lambda}t).
\end{equation}

Indeed, if $N=3$ then, reconsidering the constants, it is trivial
that $v$ is as in \eqref{SL5}. Assume  $N\geq 4$ and $B\neq 0$.
Using the behavior of $J_n$ and $Y_n$ at zero we get that
$$v(t)\sim AC_1t^{N-1}+BC_2t,$$
where $C_1=C_1(\lambda)$, $C_2=C_2(\lambda)$ are non-trivial
constants depending on $\lambda$. Consequently, up to a constant,
$$u(t)\sim 1+ Ct^{3-N}$$
with $C\neq 0$. Since $N\geq 4$, this last formula yields to
$$\int_{0}^{\gamma} u^2(t) dt=\infty,$$
which contradicts the fact that $u\in H$. Hence, the assumption is
false and $B=0$ for any $N\geq 4$.

Imposing the condition $v(\gamma)=0$ in the simplified expresion
\eqref{SL5},  we obtain $\sqrt{\lambda}=B_n$, where $\{B_n\}_{n}$
are the positive zero's of the Bessel function $J_{\frac{N-3}{2}}$.
In particular we obtain
$$\lambda_{\star}^1(\gamma)=B_1^2/\gamma^2.$$ 
Using this, the relations \eqref{firsteigenvalue},
\eqref{firsteigenvalue2} and  the inequality \eqref{equivalence1}
 we obtain the
conclusion of Theorem \ref{SL1}.
\end{proof}
\subsection{ Proofs of useful results}
\begin{prop}\label{prop1}
For any $ v\in H$ we have
 \be\label{keypoint} \intog v_t^2
t^{N-2} dt \geq \frac{1}{\gamma}\Big(\frac{N-2}{2}\Big)^2 \intog v^2
t^{N-3} dt. \ee
\end{prop}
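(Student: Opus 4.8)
The plan is to obtain \eqref{keypoint} from a single integration by parts together with the trivial bound $t^{N-1}\le\gamma\,t^{N-2}$ on $(0,\gamma)$. First I would prove the inequality for $v$ smooth on $[0,\gamma)$ and vanishing in a neighbourhood of $\gamma$, which suffices since such functions are dense in $H$; the passage to a general $v\in H$ is deferred to the last paragraph. For such a $v$ and for $0<\eps<\gamma$, applying the fundamental theorem of calculus to $\frac{d}{dt}\big(t^{N-2}v^2(t)\big)$ on $(\eps,\gamma)$ gives
\[
(N-2)\int_{\eps}^{\gamma}t^{N-3}v^2\,dt=-\eps^{N-2}v^2(\eps)-2\int_{\eps}^{\gamma}t^{N-2}v\,v_t\,dt\le-2\int_{\eps}^{\gamma}t^{N-2}v\,v_t\,dt,
\]
where the endpoint term at $\gamma$ is absent because $v(\gamma)=0$, and the endpoint term at $\eps$ has been discarded thanks to its favourable sign (for $N\ge3$ it also tends to $0$ as $\eps\to0$, so either way it causes no trouble).

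Next I would write $t^{N-2}v\,v_t=(t^{(N-3)/2}v)(t^{(N-1)/2}v_t)$ and apply Cauchy--Schwarz, which yields
\[
(N-2)\int_{\eps}^{\gamma}t^{N-3}v^2\,dt\le2\Big(\int_{0}^{\gamma}t^{N-3}v^2\,dt\Big)^{1/2}\Big(\int_{0}^{\gamma}t^{N-1}v_t^2\,dt\Big)^{1/2}.
\]
Since $N\ge3$ the nonnegative integrand $t^{N-3}v^2$ is integrable near $0$, so letting $\eps\to0$ (monotone convergence) replaces $\int_{\eps}^{\gamma}$ by $\int_{0}^{\gamma}$ on the left. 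If $\int_{0}^{\gamma}t^{N-3}v^2\,dt=0$ there is nothing to prove; otherwise I divide by $\big(\int_{0}^{\gamma}t^{N-3}v^2\,dt\big)^{1/2}$, square, and use $t^{N-1}\le\gamma\,t^{N-2}$ on $(0,\gamma)$ to obtain
\[
(N-2)^2\int_{0}^{\gamma}t^{N-3}v^2\,dt\le4\int_{0}^{\gamma}t^{N-1}v_t^2\,dt\le4\gamma\int_{0}^{\gamma}t^{N-2}v_t^2\,dt,
\]
which is \eqref{keypoint} for smooth $v$.

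Finally, for arbitrary $v\in H$ I would choose smooth $v_n$ vanishing near $\gamma$ with $v_n\to v$ in $H$; applying the case already proved to $v_n-v_m$ shows that $(v_n)$ is Cauchy in $L^2((0,\gamma),t^{N-3}\,dt)$, hence converges there, necessarily to $v$ (a subsequence converging a.e.). Passing to the limit in $(N-2)^2\int_{0}^{\gamma}t^{N-3}v_n^2\,dt\le4\gamma\int_{0}^{\gamma}t^{N-2}(v_n)_t^2\,dt$ gives the claim; the right-hand side converges because, on $(0,\gamma)$ with $\gamma<\pi$, the norm $\|\cdot\|_H$ is equivalent to the $t^{N-2}$-weighted Dirichlet norm, as remarked just before the statement of Theorem \ref{SL1} (indeed $(\sin\gamma/\gamma)^{N-2}t^{N-2}\le\sin^{N-2}t\le t^{N-2}$ there). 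The only point that genuinely requires care is this approximation step; the integration by parts itself is harmless, since the boundary contribution at $t=0$ has the helpful sign.
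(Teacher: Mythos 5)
Your proof is correct and follows essentially the same route as the paper: the key step in both is the pointwise comparison between the weights $t^{N-2}$ and $t^{N-1}/\gamma$ on $(0,\gamma)$, which reduces \eqref{keypoint} to the radial ($N$-dimensional) Hardy inequality $\int_0^\gamma t^{N-1}v_t^2\,dt\geq \frac{(N-2)^2}{4}\int_0^\gamma t^{N-3}v^2\,dt$. The only difference is that the paper simply cites the classical Hardy inequality at this point, whereas you re-prove it (integration by parts plus Cauchy--Schwarz) and add the density argument for general $v\in H$, which is sound but not a new idea.
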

\begin{proof}[Proof of Proposition \eqref{prop1}]
Of course, we have
$$\intog v_t^2t^{N-2} dt\geq \frac{1}{\gamma}\intog v_t^2 t^{N-1}dt,$$
and applying the Hardy inequality in $N$-d  we complete the proof of
Proposition \eqref{prop1}.
\end{proof}

\begin{prop}\label{prop2} The embedding
$$H \hookrightarrow\loogt$$
is compact.
\end{prop}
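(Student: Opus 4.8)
The plan is to prove the compactness by the standard scheme for degenerate weighted embeddings: extract a limit on intervals bounded away from the degenerate endpoint $t=0$ using Rellich's theorem and a diagonal argument, and then control the contribution near $t=0$ by an explicit tail estimate furnished by Proposition \ref{prop1}. Throughout I use that for $\gamma<\pi$ the ratio $t/\sin t$ is bounded above and below by positive constants on $(0,\gamma)$, so the weights $\sin^{N-2}t$ and $t^{N-2}$ (resp.\ $\sin^{N-3}t$ and $t^{N-3}$) are comparable; in particular $H$ is also the completion of the same space of test functions in the equivalent norm $\big(\intog v_t^2\,t^{N-2}\,dt\big)^{1/2}$, and by Proposition \ref{prop1} there is a constant $C_0=C_0(N,\gamma)>0$ with $\intog v^2\,t^{N-3}\,dt\le C_0\|v\|_H^2$ for all $v\in H$ (here $N\ge3$ is used so that $(N-2)^2/4\ne0$); equivalently $H\hookrightarrow\loogdt$ continuously.

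First I would take a sequence $(u_n)\subset H$ with $\|u_n\|_H\le M$; it suffices to extract a subsequence converging in $\loogt$. Fix $\delta\in(0,\gamma)$. On $[\delta,\gamma]$ both $\sin^{N-2}t$ and $t^{N-3}$ are bounded below by positive constants depending only on $\delta$ and $N$ (for $t^{N-3}$ this uses $N\ge3$), while $t^{N-2}$ is bounded above; hence the bounds $\|u_n\|_H\le M$ and $\intog u_n^2\,t^{N-3}\,dt\le C_0M^2$ give $\int_\delta^\gamma (u_n)_t^2\,dt\le C(\delta,N)M^2$ and $\int_\delta^\gamma u_n^2\,dt\le C(\delta,N)M^2$, so $(u_n)$ is bounded in $H^1(\delta,\gamma)$. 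By the Rellich--Kondrachov theorem a subsequence converges strongly in $L^2(\delta,\gamma)$. Running this successively for $\delta=1/k$, $k\in\nn$, and passing to a diagonal subsequence — still written $(u_n)$ — I obtain a measurable function $u$ on $(0,\gamma)$ with $u_n\to u$ in $L^2(\delta,\gamma)$ for every $\delta\in(0,\gamma)$ and $u_n\to u$ a.e.; by Fatou's lemma $\intog u^2\,t^{N-3}\,dt\le C_0M^2$ as well.

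The degeneracy at $t=0$ enters only through the uniform smallness of the tail, which is immediate from the trivial inequality $t^{N-2}\le\delta\,t^{N-3}$ on $(0,\delta)$ together with Proposition \ref{prop1}:
\begin{equation*}
\int_0^\delta u_n^2\,t^{N-2}\,dt\le\delta\int_0^\delta u_n^2\,t^{N-3}\,dt\le\delta\,C_0M^2\qquad\text{for every }n,
\end{equation*}
and the same bound holds for $u$. Given $\eps>0$, I would first fix $\delta>0$ so small that $4\delta C_0M^2<\eps/2$. For this $\delta$, since $u_n\to u$ in $L^2(\delta,\gamma)$ and $t^{N-2}$ is bounded on $[\delta,\gamma]$, we have $\int_\delta^\gamma|u_n-u|^2\,t^{N-2}\,dt<\eps/2$ for all $n$ large. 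Writing $\intog=\int_0^\delta+\int_\delta^\gamma$ and estimating $|u_n-u|^2\le2u_n^2+2u^2$ on $(0,\delta)$ by the tail bound, I conclude $\intog|u_n-u|^2\,t^{N-2}\,dt<\eps$ for $n$ large, i.e.\ $u_n\to u$ in $\loogt$. This is precisely the asserted compactness.

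I expect the only genuinely delicate point to be the control near the degenerate endpoint $t=0$, and this is exactly what Proposition \ref{prop1} is designed to handle, in combination with $t^{N-2}\le\delta\,t^{N-3}$ on $(0,\delta)$; the $H^1$ bounds on subintervals away from $0$, the application of Rellich's theorem, and the diagonal extraction are all routine. If desired, one may additionally observe — via weak lower semicontinuity of $\|\cdot\|_H$ and reflexivity of $H$ — that the limit $u$ in fact belongs to $H$, but this is not needed for the statement.
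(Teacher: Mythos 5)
Your proof is correct, and it rests on the same two pillars as the paper's: the uniform tail control near $t=0$ coming from Proposition \ref{prop1} combined with the elementary inequality $t^{N-2}\le \delta\,t^{N-3}$ on $(0,\delta)$, and classical one-dimensional Rellich compactness away from the origin (with the comparability of $\sin t$ and $t$ on $(0,\gamma)$, $\gamma<\pi$, used in both). Where you differ is in the implementation. The paper fixes a weakly null sequence $u_n\rightharpoonup 0$ in $H$, introduces a cut-off $\varphi$ and the splitting $u_n=\varphi u_n+(1-\varphi)u_n$, proves that the far-from-zero piece tends to zero weakly in the weighted $H^1$ space by testing against smooth functions, and then invokes the compact embedding $H_{0}^{1}(\eps,\gamma)\hookrightarrow L^2(\eps,\gamma)$; you instead take an arbitrary bounded sequence, apply Rellich--Kondrachov on each interval $(1/k,\gamma)$, pass to a diagonal subsequence, and identify the strong limit directly. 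Your route avoids the cut-off and the verification (left rather terse in the paper) that $w_{2,n}\rightharpoonup 0$ in the weighted space, at the modest price of the diagonal extraction and a Fatou argument to place the limit in $\loogt$; the paper's route avoids the diagonal step and is phrased in the usual weak-to-strong formulation of compactness, which is equivalent since $H$ is a Hilbert space. The only microscopic point to keep tidy in your version is the claim of a.e.\ convergence of the diagonal subsequence, which requires one further (standard) subsequence extraction, or can be bypassed altogether by computing $\int_{\delta}^{\gamma}u^2t^{N-3}\,dt$ as the limit of $\int_{\delta}^{\gamma}u_n^2t^{N-3}\,dt$ and letting $\delta\to 0$ by monotone convergence.
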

\begin{proof}[Proof of Proposition \ref{prop2}]
The key point is 
played by  Proposition \ref{prop1}.

Next we consider a sequence $(u_n)_n\rightharpoonup 0$ in $H$ and
suffices to prove its strong convergence in $\loogt$ i.e.
$u_n\rightarrow 0$ in $\loogt$. By weak convergence, $\{u_n\}_n$ is
bounded in $\hogt$, let's say by
a constant $C$. Accordingly to Proposition \ref{prop1} 
  we have as well
\be\label{anotherkey} 
 \quad \intog u_n^2 t^{N-3} dt \leq C, \quad \forall n\in \nn.
 \ee
Given $\eps>0$ we split the $L^2$-norm by distinguish that
concentrated in $B(0,\eps)$ and in its exterior:
\begin{equation}\label{eq95}
||u_n||_{\loogt}^{2}=\int_{0}^{2\eps}|u_n|^2 t^{N-2}
dt+\int_{2\eps}^{\gamma}|u_n|^2 t^{N-2} dt:=I_{\eps, n}^{1}+I_{\eps,
n}^{2}.
\end{equation}
Let us also consider the partition of unity of $u_n$,
$$u_n=u_n\varphi+(1-\varphi)u_n:=w_{1, n}+w_{2, n},$$
where $\varphi$ is a regular function such that
\begin{equation}
\varphi(t)=\left\{\begin{array}{ll}
  1, & t\leq \eps, \\
  0, & t\geq  2\eps, \\
\end{array}\right.
\end{equation}
  Obviously,
$\textrm{supp}(w_{1,n})\subset (0, 2\eps)$, $\textrm{supp}(w_{2,
n})\subset (\eps, \gamma)$. Firstly, from \eqref{anotherkey} we have
\begin{align}\label{eq96}
I_{\eps, n}^{1}&\leq 2\eps \int_{0}^{2\eps} |u_n|^2 t^{N-3}dt \leq
2\eps \intog |u_n|^2t^{N-3}
dt\nonumber\\
\end{align}
Secondly, let us notice that
\begin{equation}\label{eq98}
w_{2, n} \rightharpoonup 0 \textrm { in } \hogt.
\end{equation}
For this, it suffices   to prove $(\psi, w_{2,n})_{\hogt}\rightarrow
0$ for all $\psi\in C_{c}^{\infty}$. We evaluate,
\begin{align*}\label{eq97}
(\psi,w_{2, n})_{\hogt}&=\intog  \psi_t((1-\varphi)u_n)_t t^{N-2}
dt=-\intog (\psi_t t^{N-2})_t (1-\varphi) u_n  dt
\end{align*}
which converges to 0 when $n\rightarrow \infty$. This happens
because weak convergence  in  $H$ involves  weak convergence in
$\loogdt$ (by Proposition \eqref{prop1}). Now we observe that the
support of $w_{2, n}$ lies far from zero and therefore the norm of
$w_{2, n}$ in $H$ is equivalent to the norm of $w_{2, n}$ in
$H_{0}^{1}(\eps, \gamma)$. 
But
$H_{0}^{1}(\eps, \gamma)$ is compact embedded in $L^2(\eps, \gamma)$, and in particular  in $\loogdt$. 
 We obtain that $w_{2,n}
\rightarrow 0 \textrm{ in } \loogdt$.
Hence, we can choose $n$ large enough such that $I_{\eps,
n}^{2}<\eps$. From here and (\ref{eq96}), we conclude that $u_n$
converges strongly to 0 in $\loogt$.
\end{proof}

\begin{lema}\label{laa4}
Let us consider $a<1$. Then, the application
$$(0,\frac{\pi}{2})\backepsilon t\rightarrow \frac{\tan at}{\tan t} \in (0,\infty)$$
is decreasing.
\end{lema}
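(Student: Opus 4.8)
\emph{Proof proposal.} The plan is to differentiate $f(t):=\tan(at)/\tan t$ and reduce the sign of $f'$ to an elementary trigonometric inequality. Throughout I assume $0<a<1$, which is the range implicit in the statement (for $a\le 0$ the indicated map does not take values in $(0,\infty)$, since then $\tan(at)\le 0$ on $(0,\pi/2)$). On $(0,\pi/2)$ both $\tan t$ and $\tan(at)$ are smooth and strictly positive, because $at\in(0,\pi/2)$; hence $f$ is smooth and positive there. A direct computation gives
\begin{equation*}
f'(t)=\frac{a\sec^2(at)\,\tan t-\tan(at)\,\sec^2 t}{\tan^2 t}.
\end{equation*}
Since the denominator is positive, $f'(t)\le 0$ is equivalent to $a\sec^2(at)\tan t\le \tan(at)\sec^2 t$, and, dividing through by $\sec^2 t\,\sec^2(at)>0$ and using the identity $\tan\theta/\sec^2\theta=\sin\theta\cos\theta=\tfrac12\sin(2\theta)$, this is in turn equivalent to
\begin{equation*}
a\,\sin(2t)\le \sin(2at),\qquad t\in(0,\pi/2).
\end{equation*}

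The next step is to establish this last inequality. I set $g(t):=\sin(2at)-a\sin(2t)$, so that $g(0)=0$ and $g'(t)=2a\bigl(\cos(2at)-\cos(2t)\bigr)$. For $t\in(0,\pi/2)$ we have $0<2at<2t<\pi$ (this is where we use $a<1$ together with $t<\pi/2$), and since $\cos$ is strictly decreasing on $(0,\pi)$ it follows that $\cos(2at)>\cos(2t)$, hence $g'(t)>0$ on $(0,\pi/2)$. Therefore $g$ is strictly increasing, so $g(t)>g(0)=0$ for every $t\in(0,\pi/2)$, i.e. $a\,\sin(2t)<\sin(2at)$ strictly. Feeding this back into the equivalence above yields $f'(t)<0$ on $(0,\pi/2)$, so $f$ is strictly decreasing there, which is exactly the assertion.

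The argument is essentially computational, and I do not anticipate any serious obstacle. The only two points requiring a little care are the algebraic reduction of $f'(t)\le 0$ to $a\sin(2t)\le\sin(2at)$, and the observation that $2t$ stays strictly below $\pi$ on the whole interval $(0,\pi/2)$, so that the strict monotonicity of $\cos$ on $(0,\pi)$ can legitimately be invoked to sign $g'$.
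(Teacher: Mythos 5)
Your proof is correct and takes essentially the same route as the paper: after differentiating, both arguments reduce the sign of $f'$ to comparing $\sin(2at)/(2at)$ with $\sin(2t)/(2t)$, i.e. to the decreasing behavior of $x\mapsto \sin x/x$ on $(0,\pi)$. The only difference is that the paper cites this monotonicity directly, whereas you verify the equivalent inequality $a\sin(2t)\le\sin(2at)$ from scratch via the auxiliary function $g(t)=\sin(2at)-a\sin(2t)$ and the monotonicity of the cosine.
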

\begin{proof}[Proof of Lemma \ref{laa4}]
Indeed, if we consider $f(t)=\frac{\tan at}{\tan t}$ we obtain
$$f'(t)=\frac{at}{\cos ^2 at \sin^2 t}\Big(\frac{\sin 2t}{2t}-\frac{\sin 2at}{2at}\Big).$$
It follows that $f'<0$ due to the decreasing behavior of the
function $x\mapsto
\frac{\sin x}{x}$ on $(0,\pi)$. 

With this, we complete the proof.
\end{proof}

\begin{lema}\label{laa3}
For any $\eps>0$, $\eps <<1$, there exists $\delta=\delta(\eps)>0$
such that
\begin{equation}\label{eqaa7}
\sin (\frac{\pi}{2\gamma(\eps)}\theta_1) \cos \theta_1\geq (1-\eps)
\cos (\frac{\pi}{2\gamma(\eps)}\theta_1) \sin \theta_1, \quad
\forall
\quad 0\leq \theta_1\leq \frac{\pi}{2}-\eps. 
\end{equation}
\end{lema}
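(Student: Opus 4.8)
The plan is to reduce the trigonometric inequality \eqref{eqaa7} to the monotonicity statement already isolated in Lemma \ref{laa4}. Write $\gamma(\eps)=\pi/2+\delta$ and set $a=a(\delta):=\frac{\pi}{2\gamma(\eps)}=\frac{\pi}{\pi+2\delta}$, so that $0<a<1$ for every $\delta>0$ and $a(\delta)\to 1$ as $\delta\searrow 0$. For $\theta_1=0$ the inequality is the trivial identity $0=0$, so I would only need to treat $\theta_1\in(0,\pi/2-\eps]$. On that range $0<a\theta_1<\theta_1<\pi/2$, hence $\cos\theta_1>0$ and $\cos(a\theta_1)>0$; dividing \eqref{eqaa7} through by the positive factor $\cos\theta_1\cos(a\theta_1)$ turns it into the equivalent assertion $\tan(a\theta_1)/\tan\theta_1\ge 1-\eps$ for all $\theta_1\in(0,\pi/2-\eps]$.

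Next I would invoke Lemma \ref{laa4} with this value $a<1$: the map $t\mapsto \tan(at)/\tan t$ is decreasing on $(0,\pi/2)$, so its infimum over the interval $(0,\pi/2-\eps]$ is attained at the right endpoint, namely it equals $\tan\big(a(\pi/2-\eps)\big)/\tan(\pi/2-\eps)$. Thus it suffices to arrange that this single number is $\ge 1-\eps$, after which \eqref{eqaa7} follows for the whole range of $\theta_1$.

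Finally, I would fix $\delta$. Since $a(\delta)\to 1$ as $\delta\searrow 0$, the argument $a(\delta)(\pi/2-\eps)$ tends to $\pi/2-\eps\in(0,\pi/2)$, a point where $\tan$ is continuous and strictly positive; hence $\tan\big(a(\delta)(\pi/2-\eps)\big)/\tan(\pi/2-\eps)\to 1>1-\eps$ as $\delta\searrow 0$. Choosing $\delta=\delta(\eps)>0$ small enough that this ratio exceeds $1-\eps$ then yields \eqref{eqaa7}, completing the argument. I do not expect a genuine obstacle here: the only points requiring care are the sign bookkeeping that legitimizes the division (guaranteed by $a<1$ and $\theta_1\le\pi/2-\eps<\pi/2$) and the correct direction of monotonicity supplied by Lemma \ref{laa4}, which puts the worst case at $\theta_1=\pi/2-\eps$ rather than near $\theta_1=0$.
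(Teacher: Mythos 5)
Your proposal is correct and follows essentially the same route as the paper: reduce \eqref{eqaa7} to the ratio inequality $\tan\big(\tfrac{\pi}{\pi+2\delta}\theta_1\big)/\tan\theta_1\geq 1-\eps$, use the monotonicity of Lemma \ref{laa4} to place the worst case at $\theta_1=\pi/2-\eps$, and pick $\delta(\eps)$ from the limit of that endpoint ratio as $\delta\searrow 0$. Your treatment of the sign bookkeeping and the $\theta_1=0$ case is a minor tidying of the same argument.
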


\begin{proof}[Proof of Lemma \ref{laa3}]
 Let us put formally
$\gamma=\pi/2+\delta$ with $\delta>0$. Then (\ref{eqaa7}) becomes
\begin{equation*}\label{eqa8}
\sin (\frac{\pi}{\pi+2\delta(\eps)}\theta_1)\cos \theta_1\geq (1
-\eps)\cos (\frac{\pi}{\pi+2\delta(\eps)}\theta_1)\sin \theta_1,
\end{equation*}
or
\begin{equation*}\label{eqa9}
\tan (\frac{\pi}{\pi+2\delta(\eps)}\theta_1)\geq (1-\eps)\tan
\theta_1,\quad \forall\quad  0\leq \theta_1 \leq\pi/2-\eps,
\end{equation*}
or equivalent to
\begin{equation}\label{eqa12}
\frac{\tan (\frac{\pi}{\pi+2\delta(\eps)}\theta_1)}{\tan
\theta_1}\geq (1-\eps),\quad \forall \quad 0\leq \theta_1 \leq
\pi/2-\eps.
\end{equation}
Now, let us show the validity of (\ref{eqa12}). Because
$$\lim_{\delta\searrow 0}\frac{\tan\Big(\frac{\pi}{\pi+2\delta}\big(\frac{\pi}{2}-\eps\big)\Big)}{\tan
(\frac{\pi}{2}-\eps)}=1,$$
 we can choose $\delta=\delta(\eps)>0$ such that
$$\frac{\tan\Big(\frac{\pi}{\pi+2\delta(\eps)}\big(\frac{\pi}{2}-\eps\big)\Big)}{\tan (\frac{\pi}{2}-\eps)}\geq (1-\eps).$$
By this inequality and Lemma \ref{laa4} it is easy to obtain
(\ref{eqa12}).
\end{proof}

\begin{lema}\label{laa5}
Let $0<\eps<<1$. There exists $\delta=\delta(\eps)>0$ and a constant
$C$, such that
\begin{equation}\label{eqaa25}
\Big|\frac{\cos \theta_1}{\cos
(\frac{\pi}{2\gamma(\eps)}\theta_1)}\Big|<C, \quad\forall\quad 0\leq
\theta_1\leq \overline{\gamma}(\eps),
\end{equation}
\end{lema}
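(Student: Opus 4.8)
The plan is to bound the quotient in \eqref{eqaa25} separately on $[0,\pi/2]$, where it is harmless, and on $[\pi/2,\overline\gamma(\eps)]$, where numerator and denominator both degenerate at the moving right endpoint and one must compare their vanishing rates. Throughout I write $\gamma=\gamma(\eps)=\pi/2+\delta$ and $k:=\tfrac{\pi}{2\gamma}=\tfrac{\pi}{\pi+2\delta}\in(0,1)$, and I record the identity $k\gamma=\pi/2$. First I observe that we may freely shrink $\delta=\delta(\eps)$ so as to also have $\delta\le1$; this is compatible with the finitely many constraints placed on $\delta(\eps)$ in the previous steps, since each of them becomes easier as $\delta\searrow0$ (for instance the monotone quantities appearing in Lemmas \ref{laa3} and \ref{laa4}). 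Next, for $\theta_1\in[0,\overline\gamma]$ we have $k\theta_1\le k\overline\gamma<k\gamma=\pi/2$, so $\cos(k\theta_1)>0$ and the quotient is a continuous, hence bounded, function of $\theta_1$ on $[0,\overline\gamma]$; the whole point is therefore to produce a bound $C$ that does not depend on $\eps$. On $[0,\pi/2]$ this is immediate: since $0<k<1$ we have $0\le k\theta_1<\theta_1\le\pi/2$, and $\cos$ being nonnegative and decreasing there gives $0\le\cos\theta_1\le\cos(k\theta_1)$, so the quotient lies in $[0,1]$.

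On $[\pi/2,\overline\gamma]$ I set $u=\theta_1-\pi/2\in[0,\delta/2]$, so $|\cos\theta_1|=\sin u$. Using $\pi/2-k\theta_1=k\gamma-k\theta_1=k(\gamma-\theta_1)=k(\delta-u)$ I rewrite $\cos(k\theta_1)=\sin\!\big(k(\delta-u)\big)>0$, so the absolute value of the quotient equals $\sin u\,/\,\sin\!\big(k(\delta-u)\big)$. I then bound the numerator by $\sin u\le u\le\delta/2$, and the denominator from below by Jordan's inequality — applicable since $0<k(\delta-u)\le\delta\le1<\pi/2$ — giving $\sin\!\big(k(\delta-u)\big)\ge\tfrac2\pi k(\delta-u)\ge\tfrac2\pi k\cdot\tfrac\delta2=\tfrac{k\delta}{\pi}$. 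Hence the quotient is at most $\tfrac{\delta/2}{k\delta/\pi}=\tfrac{\pi}{2k}=\tfrac{\pi+2\delta}{2}\le\tfrac\pi2+1$, using $\delta\le1$.

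Putting the two ranges together, \eqref{eqaa25} holds with the explicit universal constant $C=\tfrac\pi2+1$, valid for every $\delta\in(0,1]$ and in particular for the $\delta=\delta(\eps)$ obtained after the harmless shrinking above; this is precisely the $\eps$-independent constant invoked in the proof of Theorem \ref{taa3}(\ref{cc}). The only delicate point is the endpoint behaviour near $\theta_1=\pi/2$: there both $\cos\theta_1$ and $\cos(k\theta_1)$ tend to $0$ as $\delta\searrow0$, and naively the quotient could blow up. What saves the argument is the identity $k\gamma=\pi/2$, which lets one write the denominator near the right end as $\sin\!\big(k(\gamma-\theta_1)\big)$ and thereby exhibit that it vanishes at a rate comparable to that of the numerator $|\cos\theta_1|=\sin(\theta_1-\pi/2)$; Jordan's inequality then converts this comparison into a bound that stays finite as $\delta\to0$. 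Everything else reduces to elementary monotonicity of $\sin$ and $\cos$.
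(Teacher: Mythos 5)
Your proof is correct, and it rests on the same key observation as the paper's: using $\tfrac{\pi}{2\gamma}\gamma=\tfrac{\pi}{2}$ to rewrite the denominator as $\sin\big(\tfrac{\pi}{2\gamma}(\gamma-\theta_1)\big)$, so that near $\theta_1=\pi/2$ both numerator and denominator are sines of small arguments vanishing at comparable linear rates. The difference is in the finishing step: the paper substitutes $\theta_1=\pi/2+\delta t''$, $t''\in(0,1/2)$, lets $\delta\searrow0$ and observes that the quotient tends pointwise to $t''/(1-t'')$, whose supremum on $(0,1/2)$ is $1$; you instead bound the quotient explicitly by $\sin u\le u\le\delta/2$ and Jordan's inequality $\sin x\ge\tfrac{2}{\pi}x$ on the denominator, obtaining the uniform constant $C=\tfrac{\pi}{2}+1$ valid for every $\delta\le1$. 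Your version buys two things the paper's sketch glosses over: the pointwise limit argument, as written, needs a touch of uniformity in $t''$ to yield a bound independent of $\delta$ (your computation is in effect the rigorous quantitative form of that limit), and you also treat the range $0\le\theta_1\le\pi/2$ explicitly via monotonicity of cosine, which the paper's substitution silently omits. Your remark that $\delta(\eps)$ may be shrunk to satisfy $\delta\le1$ without conflicting with the constraints of Lemmas \ref{laa3} and \ref{laa4} is also the right way to make the constant usable in the proof of Theorem \ref{taa3}(\ref{cc}).
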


\begin{proof}[Proof of Lemma \ref{laa5}]
Let us put $\theta_1=\pi/2+\delta (\eps )t''$ with $t''\in (0,1/2)$.
Then
\begin{eqnarray}\label{la6}
\Big|\frac{\cos \theta_1}{\cos
(\frac{\pi}{2\gamma(\eps)}\theta_1)}\Big|&=
&\Big|\frac{\sin\delta(\eps)t''}{\sin\Big(\frac{\pi\delta(\eps)}
{\pi+2\delta(\eps)}(1-t'')\Big)}\Big| \rightarrow \frac{t''}{1-t''},
\end{eqnarray}
 when $\delta(\eps)\rightarrow 0,\quad  \forall\quad  t''\in (0,1/2).$ But
$$\sup_{t''\in (0,1/2)}\frac{t''}{1-t''}=1,$$ and this yields to the
conclusion of Lemma.
\end{proof}

\section{Further comments and open problems}\label{sec4}
\subsection{Efficiency of the methods and sharp reminder terms} As we mentioned in Theorem \ref{tu2}, the inequality we
obtained involves a reminder term of order $\int v^2/|x|dx$ in the
lower bound. The proof uses a change of variable adapted to the
boundary near the singular point. Comparing Theorems \ref{tu2} and
\ref{tu3}, we see  that the results improve when using spherical
harmonics decomposition. More precisely, the inequality stated in
Theorem  \ref{tu3} admits an optimal reminder term of order $\int
v^2/(|x|^2\log^2(1/|x|))dx$. Thus, spherical harmonics decomposition
yields better results.

\subsection{Inequalities in cones}
In 2-d we have given  a complete picture of the sharp Hardy
inequality. In the multi-dimensional case, $N\geq 3$, we proved
several qualitative inequalities but optimality results are still to
be proved. To be more precise, in convex cones we  proved that the
constant is at least $(N-2)^2/4+(N-1)\pi^2/4\gamma^2$,  $\gamma$
being the slot of the cone (see Subsection \ref{taa3}). This result
shows that the best constant  blows-up when $\gamma$ tends to 0. For
concave cones we have obtained less information: we have only shown
that the constant varies continuously with respect to the slot
$\gamma$ nearby $\gamma=\pi/2$. For any cone with the slot
$\gamma\in (0, \pi)$ we have proved that the best constant is at
least $(\sin \gamma/ \gamma)^{N-2}B_1^2/\gamma^2$, where $x_1$ is
the first positive zero of the Bessel function $J_{(N-3)/2}$.  To
our knowledge, explicit formulas for the optimal constant are still
to be proved.

\subsection{Weak Hardy inequalities with $L^2$-reminder terms}

In the context of smooth domains, for certain geometries,  we have
improved the Hardy
 constant
 from $ (N-2)^2/4$ to $N^4/4$. To do
this,  we had to add a  $L^2$-reminder term in the upper bound of
the inequality. This extra term in $L^2$-norm cannot be disregarded
as shown in Proposition \ref{hardysub}. Thus,  the inequalities that
we obtain are sharp. However, the problem on the possible existence
of domains  $\Omega$ satisfying C$\ref{eqq3}$ such that
$\mu(\Omega)=N^2/4$ is open. In the case where $\Omega$ has an
hyperbolic geometry at the origin, as $\eps$ tends to 0, the
constant $C(\Omega, \eps)$ is expected to blow-up (see Theorem
\ref{ta5}). This class of generalized Hardy inequalities with lower
order reminder terms is of application in various contexts.  For
instance, the Hardy inequalities play a crucial role when studying
the controllability of wave equations with quadratic singular
potentials. In that setting, one can get rid of the $L^2$-reminder
terms, using compactness-uniqueness arguments (see \cite{cristi1}).

\vspace{0.5cm}

\noindent {\bf Acknowledgements.} I am really grateful to Enrique
Zuazua for his guidance.  I wish to thank Adi Adimurthi, David
Krejcirik, Kyril Tintarev, Liviu Ignat, Alessio Porretta, for
fruitful discussions.


\end{document}